\definecolor{labelkey}{rgb}{0.6,0,1}
\definecolor{labelkey}{rgb}{0.6,0,1}
\newcommand{\mathbi}[1]{{\boldsymbol #1}}
\newcommand{\eop}{{\unskip\nobreak\hfil\penalty50
           \hskip2em\hbox{}\nobreak\hfil\mbox{\rule{1ex}{1ex} \qquad}
   \parfillskip=0pt
   \finalhyphendemerits=0\par\medskip}}
\renewenvironment{proof}[1][]{\noindent {\bf Proof#1. } }{\eop}
\newtheorem{theorem}{Theorem}[section]
\newtheorem{remark}[theorem]{Remark}
\newtheorem{lemma}[theorem]{Lemma} 
\newtheorem{definition}[theorem]{Definition}
\newtheorem{algorithm}[theorem]{Algorithm}
\newtheorem{corollary}[theorem]{Corollary}
\definecolor{shadecolor}{gray}{0.92}
\definecolor{TFFrameColor}{gray}{0.92}
\definecolor{TFTitleColor}{rgb}{0,0,0}
\newcommand{\ba}{\begin{array}{llll}   }
\newcommand{\bac}{\begin{array}{c}}
\newcommand{\bari}{\begin{array}{r}}
\newcommand{\ea}{\end{array}}
\newcommand{\ban}{\begin{array}{llll}}
\newcommand{\ean}{\end{array}}
\newcommand{\be}{\begin{equation}}
\newcommand{\ee}{\end{equation}}
\newcommand{\beqsys }{\beqtab \left \{ \begin{array}{l}}
\newcommand{\eeqsys }{\end{array} \right . \eeqtab }
\newcommand{\benum}{\begin{enumerate}}
\newcommand{\eenum}{\end{enumerate}}
\newcommand{\beqtab}{\begin{eqnarray}} 
\newcommand{\eeqtab}{\end{eqnarray}}
\newcommand{\mD}{{\mathcal D}}
\newcommand{\mI}{{\mathcal I}}
\newcommand{\mId}{\mathbbm{1}}
\newcommand{\bxi}{\mathbi{\xi}}
\renewcommand{\d}{{\rm d}}
\newcommand{\disc}{{\mathcal D}}
\newcommand{\dive}{{\rm div}}
\renewcommand{\div}{{\mathop{\rm div}}}
\newcommand{\N}{\mathbb N}
\newcommand{\R}{\mathbb R}
\newcommand{\x}{\mathbi{x}}
\newcommand{\vphi}{\mathbi{\phi}}
\DeclareDocumentCommand{\RPiD}{ O{\disc} O{,0} }{\Pi_{#1}(X_{#1#2})}
\def\Fdof#1{{\bm{\mathcal{F}}(#1,\R)}}
\def\Fdof{\@ifnextchar[{\@with}{\@without}}
\def\@with[#1]#2{{\bm{\mathcal{F}}(#2;#1)}}
\def\@without#1{{\bm{\mathcal{F}}(#1,\R)}}
\def\RT0{\mathbb{RT}_0}
\def\ms{\widehat{m}}
\newcommand{\s}{\mathsf{s}}
\newcommand{\f}{\mathsf{f}}
\newcommand{\ii}{\mathsf{i}}
\newcommand{\eminus}{\mbox{\hspace*{0.1pt}e-}}
\begin{document}

	\title[]{The gradient discretisation method for slow and fast diffusion porous media equations}
	
\author{J\'er\^ome Droniou}
\address{School of Mathematics, Monash University, Clayton, Victoria 3800, Australia.
\texttt{jerome.droniou@monash.edu}}
\author{Kim-Ngan Le}
\address{School of Mathematics, Monash University, Clayton, Victoria 3800, Australia.
\texttt{ngan.le@monash.edu}}

	\date{\today}
	
	%
	%
	
	\subjclass[2010]{65M08, 
	65M12, 
	65M60, 
    76S05}	

	\maketitle
\begin{abstract}
	The gradient discretisation method (GDM) is a generic framework for designing and analysing numerical schemes for diffusion models. In this paper, we study the GDM for the porous medium equation, including fast diffusion and slow diffusion models, and a concentration-dependent diffusion tensor. Using discrete functional analysis techniques, we establish a strong $L^2$-convergence of the approximate gradients and a uniform-in-time convergence for the approximate solution, without assuming non-physical regularity assumptions on the data or continuous solution. Being established in the generic GDM framework, these results apply to a variety of numerical methods, such as finite volume, (mass-lumped) finite elements, etc. The theoretical results are illustrated, in both fast and slow diffusion regimes, by numerical tests based on two methods that fit the GDM framework: mass-lumped conforming $\mathbb{P}_1$ finite elements and the Hybrid Mimetic Mixed method.
\end{abstract}
\section{Introduction}
In this paper, we study nonlinear porous media equations of the form
\begin{align}\label{eq: PME}
&\partial_t u - \div\left[\Lambda(\cdot,u)\nabla \beta(u)\right] = f\quad\text{in } \Omega_T:=(0,T)\times \Omega,\nonumber\\
&u(0,\cdot) = u_0 \quad\text{in }  \Omega,\\
&\beta(u) = 0 \quad\text{on } (0,T)\times \partial\Omega,\nonumber
\end{align}
where $\beta(u)=|u|^{m-1}u$ with $m>0$, $\Omega$ is an open bounded domain of $\R^d$ ($d\geq 1$) with boundary $\partial \Omega$, $f\in L^2(\Omega_T)$, $u_0\in L^{m+1}(\Omega)$, and  $\Lambda:\Omega\times \R\to\mathcal S^d(\R)$ is a symmetric tensor which is measurable with respect to its first variable, continuous with respect to its second variable, and is uniformly elliptic and bounded:
\begin{equation}\label{eq:hyp.Lambda}
\exists\underline{\lambda},\overline{\lambda}\in(0,\infty)\mbox{ s.t. }
\underline{\lambda}|\bxi|^2\le \Lambda(\x,s)\bxi\cdot\bxi\le \overline{\lambda}|\bxi|^2\quad
\forall s\in\R\,,\forall\bxi\in\R^d\,,\mbox{ for a.e. $\x\in\Omega$}.
\end{equation}

In the case $\Lambda={\rm Id}$ and $m>1$, the equation~\eqref{eq: PME} is the standard model of diffusion of a gas in porous media, which is also called the slow diffusion model. The case $m=2$ describes the flow of an ideal gas in porous media while $m> 2$ that of a diffusion of a compressible fluid through porous media. Other choices of exponent appear in different physical situations, such as thermal propagation in plasma ($m=6$) or plasma radiation ($m=4$). The slow diffusion equation is not uniformly elliptic as it degenerates at unknown points where $u=0$. An interesting feature is that, if $u_0$ is compactly supported, then at any $t>0$ the support of $u(t)$ has a free boundary with a finite speed of propagation, see e.g.~\cite{Caffarelli1980,Vazquez1992}. Equation \eqref{eq: PME} with $\Lambda$ depending on $u$ appears in Richards' model, when the relative permeability depends on the hydraulic charge, and in diphasic models from petroleum reservoir simulation, when the capillary pressure follows a power law \cite{aziz2002petroleum}.

In the fast diffusion case $0<m<1$, the equation~\eqref{eq: PME} is relevant in the description of plasma physics, the kinetic theory of gas or fluid transportation in porous media~\cite{BerrymanHolland1978,CarChaGraSwi1990,Vaz2006}. Since the modulus of ellipticity $|u|^{m-1}$ blows up whenever $u$ vanishes, the fast diffusion equation is a singular equation. An essential difference between the fast and slow diffusion cases is that for fast diffusion the solution decays to zero in some finite time depending on the initial data while for slow diffusion the solution decays to zero in infinite time like an inverse power of $t$, see e.g.~\cite{Berryman1980,Diben1991}.

There is a vast literature on the numerical approximation of \eqref{eq: PME} (mostly with $\Lambda={\rm Id}$ or not depending on $u$), possibly in a recast form. We only mention a few relevant studies here. In \cite{MNV87}, the authors consider the time discretisation of $\partial_t u-\Delta\beta(u)=f(\beta(u))$, using a maximal-monotone operator approach. Error estimates are obtained in $L^\infty(0,T;H^{-1}(\Omega))$ and $L^2(\Omega_T)$ norms, with a rate depending on the smoothness of the initial condition and under the assumption that $\beta$ is globally Lipschitz-continuous.
\cite{AWZ97} considers the mixed finite element approximation of a degenerate parabolic equation with advection, arising in petroleum simulations. Error estimates, in similar norms as above, are obtained under the assumption that the non-linearities are Lipschitz-continuous on the entire range of the numerical solutions. The Lipschitz-continuity assumption only covers \eqref{eq: PME} in the slow diffusion case $m>1$, provided a uniform bound is known or assumed on the approximate solution. The case of systems of PDEs modelled on \eqref{eq: PME} is analysed in \cite{JK91}, in which the $L^2$-convergence of a semi-discrete scheme (with relaxation of the non-linear source term) is established in the case $m>1$. In a recent work~\cite{Teso2018}, a general nonlinear diffusion equation on the whole space $\R^d$ is considered. The authors propose monotone schemes of finite difference type on Cartesian meshes and obtain $L^1_{\text{loc}}$-convergence for these schemes, using an approach in which this convergence follows from generic estimate results on perturbed version of \eqref{eq: PME}.

If we formally represent this equation as 
\begin{equation}\label{eq: PME2}
\partial_t u - \dive(\beta'(u)\nabla u) = f,
\end{equation}
where $\beta'(u) = m|u|^{m-1}$, then we obtain a classical nonlinear diffusion equation. The term $|u|^{m-1}$ in~\eqref{eq: PME2} induces the degeneracy that raises many challenges in the analysis of the porous media equations. These problems have been studied extensively both in theory, see e.g.~\cite{Berryman1980,Vazquez2007,Vaz2006}, numerical analysis, see e.g.~\cite{Duque2013,Ebmeyer1998,Ebmeyer2008,Rui2017}, as well as in numerical approximations (without convergence proof)~\cite{Li2018}. 
In particular, authors in~\cite{Duque2013} study equation~\eqref{eq: PME2} with variable exponent of nonlinearity, i.e. $\beta'(u)$ is replaced by $|u|^{\gamma(x)}$ where $\gamma >1$ (i.e. $m>2$ in our case). In order to deal with the degeneracy in the problem, an approximate regularized problem is investigated.  A space-time discretization scheme using the finite element method in space and the discontinuous Galerkin method in time  is proposed for the regularized model (not the original problem). Furthermore, error estimates are obtained with strong regularity assumptions on the solution of the regularized model, which are not expected to be satisfied by classical solutions to \eqref{eq: PME} such as Barenblatt solutions~\cite{Barenblatt52,Pattle1959}. 
A space and time dependent exponent $\gamma$ is studied in~\cite{Rui2017} using the same method as in~\cite{Duque2013}.
The slow diffusion case ($m>1$) is studied in~\cite{Ebmeyer1998} where a fully discrete scheme is proposed and $L^2$ error estimates are proved with strong assumptions on the solution and the pressure $\beta'(u)$. 

Another common way to design and analyse numerical methods for \eqref{eq: PME} is to re-cast it, using the Kirchhoff transform:
 \begin{equation}\label{eq:Richards}
 \partial_t \psi(w) - \Delta w=0,
 \end{equation}
where $w:= |u|^{m-1}u$ and $u=\psi(w):= |w|^{(1-m)/m}s$. 
 Finite elements approximations of \eqref{eq:Richards} have been extensively studied in the literature. In \cite{NochettoVerdi88}, the conforming approximation is analysed in the case of a convex domain, using a smoothed version of $\psi$ and under assumptions only satisfied for $m>1$; error estimates, also accounting for the smoothing parameter, are obtained in norms similar to \cite{MNV87} and yield an $\mathcal O(h^{(m+1)/2m})$ rate for an optimal choice of the smoothing parameter and time step $\sim h^{(m+1)/m}$. Estimates in $L^{m+1}(\Omega_T)$ norm are also obtained, at least in dimension one, leading to an $h^{4m/(m+1)(3m-1)}$ convergence rate. We note in passing that the results of \cite{Nochetto86} allows, under a non-degeneracy property of the continuous solutions, to transform $L^p$-error estimates on the solutions into error estimates on the location of the free boundary. Mixed finite elements for \eqref{eq:Richards} have been considered in the following works. Under a Lipschitz-continuity assumption on $\psi$ (which, as noticed above, is not satisfied by \eqref{eq: PME}), \cite{EKR04} obtains error estimates for the Raviart--Thomas mixed finite elements in norms similar to \cite{MNV87,AWZ97}. The convergence of a linearisation scheme is established in \cite{PRK04} for a globally Lipschitz continuous $\psi$, and extended in \cite{Raduetc17} to non-Lipschitz functions. This assumption is relaxed in \cite{RPK08}, which considers the mixed finite element approximation of \eqref{eq:Richards} allowing degeneracy/singularity of $\psi$ which covers the whole range of $m$ in \eqref{eq: PME}; error estimates are obtained in uniform-in-time weak-in-space and averaged time-space norms as in \cite{AWZ97}, some of them under regularity assumptions on the flux variable. \cite{EHV06} proposes and analyses a scheme for \eqref{eq:Richards}, with an added advection term and $\psi$ globally Lipschitz; the diffusion component is handled by a non-conforming or mixed finite element, while the advection and reaction terms are discretised using centred finite volume. The Lipschitz-continuity assumption on $\psi$ can be also be relaxed by recasting \eqref{eq:Richards} in a form $\partial_t b(v)-\Delta c(v)=0$ that covers all the range $m\in (0,\infty)$ in \eqref{eq: PME} with globally Lipschitz continuous functions $b$ and $c$, but at the expense of a doubly degenerate equation and the usage of relaxation functions to control each fast and slow diffusion cases; this is the approach chosen in \cite{JK95,K97}, in which the convergence of as semi-discrete scheme is established. More recently, \cite{Ebmeyer2008} considered the fast and slow diffusion cases in which a fully discrete Galerkin approximation is considered for \eqref{eq:Richards}. Error estimates in non-standard quasi norms and rates of convergence are proved with strong regularity assumptions on the solution. 

Finally, let us mention that various numerical methods have been studied for other models of miscible or multi-phasic porous media flow, presented as systems of equations with one of them a non-linear parabolic equation, see e.g. \cite{Y97,eym03,CD07,RW11,EM12,GLR17,CR18}. In most cases, though, the parabolic equation is not really degenerate in the sense that its linearisation is uniformly parabolic.

In this paper, we use the gradient discretisation method to approximate~\eqref{eq: PME} and discrete functional analysis techniques to obtain an $L^\infty(0,T;L^{m+1}(\Omega))$-convergence result without assuming non-physical regularity assumptions on the data. The gradient discretisation method (GDM) is a generic framework for the design and analysis of numerical schemes for diffusion models. Using only a few discrete elements (a space, and a function and gradient reconstruction), it describes a variety of numerical schemes --- such as finite volumes, finite elements, discontinuous Galerkin, etc. --- and identifies three key properties of the discrete elements that ensure the convergence for linear and nonlinear models. We refer to the monograph \cite{Droniou.et.al2018} for a detailed presentation of the GDM, and of the methods it contains.

There are several advantages in using the GDM approach to analyse schemes for \eqref{eq: PME}. First, the convergence analysis readily applies to all schemes covered by the framework, which include many low- and high-order schemes; as noticed above, previous numerical analysis results on the porous medium equation seemed to mostly focus on conforming or mixed finite element discretisations. Second, the GDM provides general compactness results that can be used to simplify the analysis --- no need to establish \emph{ad hoc} compactness theorems for each specific method. Third, our approach seamlessly works for both slow ($m>1$) and fast ($m<1$) diffusion models, without having to re-cast the latter using the Kirchhoff transform, which is not the case of many analyses in the literature. Fourth, the uniform-in-time strong-in-space $L^\infty(0,T;L^{m+1}(\Omega))$ convergence result seems to be new even for schemes previously studied for \eqref{eq: PME} or \eqref{eq:Richards}.
A trade-off of using the GDM is that the analysis must be based on energy estimates, and cannot rely on maximum principles --- which are not satisfied by many schemes of practical interest, e.g. when dealing with polytopal meshes as encountered in applications.

The techniques we use are inspired by \cite{Droniou2016}, in which a doubly degenerate parabolic equation is considered. The convergence results in this reference cover \eqref{eq: PME} in the case where $\beta$ is globally Lipschitz continuous; this condition is critical to many elements of the analysis in \cite{Droniou2016}. Our contribution here is to consider the case where $\beta$ is not  uniformly Lipschitz continuous at infinity, or not Lipschitz-continuous at certain points; these two cases correspond to $m>1$ (porous medium equation) and $m<1$ (fast diffusion equation), respectively. As shown by Theorem \ref{theo:cv.GS}, considering a non-globally Lipschitz continuous nonlinearity changes the nature of the observed convergence; in particular, $L^2(\Omega)$ is no longer the natural space for the uniform-in-time convergence.

\medskip

Other approaches to convergence analysis of numerical schemes for \eqref{eq: PME} reside in using general Crandall--Liggett and Trotter--Kato approximation theories for monotone operators (see \cite{CL71}, \cite[Section 10.2.4]{Vazquez2007}, \cite{T58,K59} and \cite[Chapter 4]{IK02}). The Crandall--Liggett theorem deals with the time discretisation, while the Trotter--Kato theorem concerns the approximation of the spatial operator; upon establishing that one of these results is uniform with respect to the discretisation parameter of the other, the combination of these two yield a convergence for fully discrete schemes for the porous medium equation. This has however two limitations. Firstly, the $m$-accretivity of the porous medium only seems to be available in $L^1(\Omega)$, not in $L^p(\Omega)$ for $p>1$ \cite[Section 10.3]{Vazquez2007}; as a consequence, the convergence would only be $L^1$ in space. Secondly, the Crandall--Liggett and Trotter--Kato theories could only be applied for $\Lambda$ independent of $u$, since they require a monotone spatial operator. Our result, based on compactness and fine energy equality, yields on the contrary a uniform-in-time and $L^{m+1}$ in space convergence, and is applicable to non-monotone operators with $\Lambda$ depending on $u$.

\medskip

The paper is organised as follows. Section \ref{sec:gs} recalls notations of the GDM and introduce an implicit-in-time gradient scheme for~\eqref{eq: PME}. The definition of weak solutions and the main convergence result are stated in Section \ref{sec:main_result}. We provide a priori estimates for the approximate solution in Section \ref{sec:apriori}. Section \ref{sec:init_conv} contains the initial weak convergence of the gradient scheme. The main convergence result, including the uniform-in-time convergence, is proved in Section \ref{sec:unif_conv}. Numerical results are provided in Section \ref{sec:tests} to illustrate the generic convergence results; to this purpose, we choose two particular gradient schemes, the mass-lumped conforming $\mathbb{P}_1$ method and the Hybrid Mimetic Mixed method, and we evaluate their accuracies when approximating the Barenblatt solution. These tests are presented for a variety of exponents $m$, including both fast diffusion and slow diffusion ranges. A brief conclusion is presented in Section \ref{sec:concl}, and technical results are gathered in an appendix (Section \ref{sec:appen}).

\section{The gradient discretisation method and main convergence result}

\subsection{Gradient scheme}\label{sec:gs}
We recall here the notions of the gradient discretisation method. The idea of this general analysis framework is to replace, in the weak formulation of the problem, the continuous space and operators by discrete ones; the set of discrete space and operators is called a gradient discretisation (GD), and the scheme obtained after substituting these elements into the weak formulation is called a gradient scheme (GS). The convergence of the obtained GS can be established based on only a few general concepts on the underlying GD. Moreover, different GDs correspond to different classical schemes (finite elements, finite volumes, etc.). Hence, the analysis carried out in the GDM directly applies to all these schemes, and does not rely on the specificity of each particular method.

\begin{definition}\label{def: gdm}
$\mD=\bigl(X_{\mD,0},\Pi_\mD,\nabla_\mD,\mI_\mD,\bigl(t^{(n)}\bigr)_{n=0,\cdots,N}\bigr)$ is a space-time gradient discretisation for homogeneous Dirichlet boundary conditions, with piecewise constant reconstruction, if
\begin{enumerate}[label=(\roman*)]
\item the set of discrete unknowns $X_{\mD,0}$ is a finite dimensional real vector space,
\item\label{def:PiD} the linear map $\Pi_\mD:X_{\mD,0}\rightarrow L^\infty(\Omega)$ is a piecewise constant reconstruction operator in the following sense: there exists a basis $(\mathbf{e}_i)_{i\in I}$ of $X_{\mD,0}$ and a family $(\Omega_i)_{i\in I}$ of disjoint subsets of $\Omega$ such that, for all $u=\sum_{i\in I}u_i\mathbf{e}_i\in X_{\mD,0}$, it holds $\Pi_\mD u = \sum_{i\in I}u_i\mathbf{1}_{\Omega_i}$, where $\mathbf{1}_{\Omega_i}$ is the characteristic function of $\Omega_i$,
\item the linear mapping $\nabla_\mD: X_{\mD,0}\rightarrow L^2(\Omega)^d$ gives a reconstructed discrete gradient. It must be chosen such that $\|\nabla_\mD\cdot\|_{L^2(\Omega)}$ is a norm on $X_{\mD,0}$,
\item $\mI_\mD : L^{m+1}(\Omega)\rightarrow X_{\mD,0}$ is an interpolation operator,
\item  $t^{(0)}=0<t^{(1)}<\cdots<t^{(N)} = T$.  
\end{enumerate}
We then let $\delta t^{(n+\frac12)} = t^{(n+1)}-t^{(n)}$ and $\delta t_\mD = \max_{n=0,\cdots,N-1} \delta t^{(n+\frac12)} $.
\end{definition}

For any $\bigl(v^{(n)}\bigr)_{n=0,\cdots,N}\subset X_{\mD,0}$, we define the piecewise-constant-in-time functions $\Pi_\mD v:[0,T]\to L^\infty(\Omega)$, $\nabla_\mD v:(0,T]\to L^2(\Omega)^d$ and $\delta_\mD v:(0,T]\to L^2(\Omega)$ by:
For $n=0,\cdots,N-1$, for any  $t\in(t^{(n)},t^{(n+1)}]$, for a.e. $\x\in\Omega$
\begin{align*}
\Pi_\mD v(0,\x):=\Pi_\mD v^{(0)}(\x),\qquad
&\Pi_\mD v(t,\x):=\Pi_\mD v^{(n+1)}(\x),\\
\nabla_\mD v(t,\x):= \nabla_\mD v^{(n+1)}(\x),\qquad
&\delta_\mD v(t)=\delta_\mD^{(n+\frac12)} v:=\frac{\Pi_\mD v^{(n+1)} -\Pi_\mD v^{(n)}}{\delta t^{(n+\frac12)}}\in L^2(\Omega).
\end{align*}
Note that $\Pi_\mD v$ is defined everywhere, including at $t=0$. This will be required to state a uniform-in-time convergence result on this reconstructed function.

If $v=\sum_{i\in I}v_i\mathbf{e}_i\in X_{\mD,0}$ and $g:\R\to\R$ satisfies $g(0)=0$, we define $g(v):=\sum_{i\in I}g(v_i)\mathbf{e}_i\in X_{\mD,0}$. The piecewise constant feature of $\Pi_\mD$ then shows that
\begin{equation}\label{PiD:commute}
\forall v\in X_{\mD,0}\,,\quad \Pi_\mD g(v)=g(\Pi_\mD v).
\end{equation}

Once a GD has been chosen, an implicit-in-time gradient scheme for~\eqref{eq: PME} is defined the following way.

\begin{algorithm}[GS for \eqref{eq: PME}]
Set $u^{(0)}:=\mI_\mD u_0$ and let $u=\bigl(u^{(n)}\bigr)_{n=0,\cdots,N}\subset X_{\mD,0}$ satisfy:
\begin{align}\label{eq: gdmscheme}
\big\langle \delta_\mD u,\Pi_\mD \phi\big\rangle_{L^2(\Omega_T)}
+
\big\langle\Lambda(\cdot,\Pi_\mD u)\nabla_\mD\beta(u),  \nabla_\mD \phi\big\rangle_{L^2(\Omega_T)}
=
\big\langle f,  \Pi_\mD \phi\big\rangle_{L^2(\Omega_T)},
\end{align}
for all `test function' $\phi=\bigl(\phi^{(n)}\bigr)_{n=0,\cdots,N}\subset X_{\mD,0}$.
\end{algorithm}

In order to establish the stability and convergence of the GS~\eqref{eq: gdmscheme}, sequences of space-time gradient discretisations $(\mD_l)_{l\ge 1}$  are required to satisfy \textit{consistency, limit-conformity} and \textit{compactness} properties~\cite{Droniou.et.al2018}. The consistency is slightly adapted here to account for the nonlinearity we consider. In the following, we let $\ms=\max(1,1/m)$.

\begin{definition}[Consistency]
A sequence $(\mD_l)_{l\ge 1}$ of space-time gradient discretisations in the sense of Definition~\ref{def: gdm} is said to be consistent if
\begin{itemize}
\item for all $\phi\in L^{1+\ms}(\Omega)\cap H^1_0(\Omega)$, letting
\[
\hat{S}_{\mD_l}(\phi) := \min_{w\in X_{\mD_l}} 
\bigl(\|\Pi_{\mD_l} w - \phi\|_{L^{1+\ms}(\Omega)} + \|\nabla_{\mD_l} w-\nabla \phi\|_{L^2(\Omega)}\bigr),
\]
we have $\hat{S}_{\mD_l}(\phi)\rightarrow 0$ as $l\rightarrow \infty$,
\item  for all $\psi\in L^{m+1}(\Omega)$, $\Pi_{\mD_l}\mI_{\mD_l} \psi\rightarrow \psi$ in $ L^{m+1}(\Omega)$ as $l\rightarrow \infty$ 
\item $\delta t_{\mD_l} \rightarrow 0$ as $l\rightarrow \infty$.
\end{itemize}
\end{definition}
\begin{definition}[Limit-conformity]
A sequence $(\mD_l)_{l\ge 1}$ of space-time gradient discretisations in the sense of Definition~\ref{def: gdm} is said to be limit-conformity if, for all $\vphi\in H_\dive(\Omega):=\{\vphi\in  L^2(\Omega)^d\,:\,\div \vphi\in L^2(\Omega)\}$, letting 
\[
W_{\mD_l}(\vphi):=\max_{v\in X_{\mD_l}\backslash \{0\}}
\frac{\bigg|\displaystyle\int_\Omega \bigl(\nabla_{\mD_l}v(\x)\cdot \vphi(\x)+\Pi_{\mD_l}v(\x)\div\vphi(\x)\bigr)d\x\bigg|}{\|\nabla_{\mD_l}v\|_{L^2(\Omega)}},
\]
we have $W_{\mD_l}(\vphi)\rightarrow 0$ as $l\rightarrow\infty$.
\end{definition}
\begin{definition}[Compactness]
A sequence $(\mD_l)_{l\ge 1}$ of space-time gradient discretisations in the sense of Definition~\ref{def: gdm} is said to be 
compact if 
\[
\lim_{\bxi\rightarrow 0}\sup_{l\ge 1}\, T_{\mD_l}(\bxi) = 0,
\]
where 
\[
T_{\mD_l}(\bxi):=\max_{v\in X_{\mD_l}\backslash \{0\}}
\frac{\|\Pi_{\mD_l}v(\cdot + \bxi)-\Pi_{\mD_l}v\|_{L^2(\R^d)}}{\|\nabla_{\mD_l}v\|_{L^2(\Omega)}},\quad\forall \bxi\in\R^d,
\]
and $\Pi_{\mD_l}v$ has been extended by $0$ outside $\Omega$.
\end{definition}

A sequence of GDs that is compact or limit-conforming also satisfies another important property: the coercivity \cite[Lemmas 2.6 and 2.10]{Droniou.et.al2018}.
\begin{lemma}[Coercivity of sequences of GDs]
If a sequence $(\mD_l)_{l\ge 1}$ of space-time gradient discretisations in the sense of Definition~\ref{def: gdm} is compact or limit-conforming, then it is coercive: there exists a constant $C_p$ such that 
\[
C_{\mD_l} :=\max_{v\in X_{\mD_l}\backslash \{0\}}\frac{\|\Pi_{\mD_l}v\|_{L^2(\Omega)}}{\|\nabla_{\mD_l}v\|_{L^2(\Omega)}} \leq C_p,
\quad \forall l\ge 1.
\]
\end{lemma}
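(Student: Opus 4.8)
This is \cite[Lemmas~2.6 and~2.10]{Droniou.et.al2018}; the plan is as follows. Extending every $\Pi_{\mD_l}v$ by $0$ outside $\O$, so that $\norm{\Pi_{\mD_l}v}{L^2(\O)}=\norm{\Pi_{\mD_l}v}{L^2(\R^d)}$, it suffices to prove a single uniform discrete Poincar\'e inequality,
\[
\norm{\Pi_{\mD_l}v}{L^2(\O)}\le C_p\,\norm{\nabla_{\mD_l}v}{L^2(\O)}\qquad\text{for all }l\ge1,\ v\in X_{\mD_l},
\]
because $C_{\mD_l}$ is by definition the smallest constant making this hold. I fix $R>0$ such that $\overline\O$ is contained in a cube of side length $R$, and treat the two hypotheses by two independent arguments.

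Suppose first the sequence is compact. The idea is to use translations. Fix a coordinate direction $\mathbf{e}=(1,0,\dots,0)\in\R^d$ and a small $h>0$, and set $N=\lceil R/h\rceil$. Shifting $\Pi_{\mD_l}v$ by $Nh\mathbf{e}$ gives a function whose support is disjoint from that of $\Pi_{\mD_l}v$ (the shift has length $Nh\ge R$, exceeding the extent of the cube along $\mathbf{e}$), so by translation invariance $\norm{\Pi_{\mD_l}v(\cdot+Nh\mathbf{e})-\Pi_{\mD_l}v}{L^2(\R^d)}=\sqrt2\,\norm{\Pi_{\mD_l}v}{L^2(\R^d)}$. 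Writing this shift as a telescoping sum of $N$ elementary shifts by $h\mathbf{e}$ and using the triangle inequality and translation invariance, I bound the left-hand side by $N\,\norm{\Pi_{\mD_l}v(\cdot+h\mathbf{e})-\Pi_{\mD_l}v}{L^2(\R^d)}\le N\,T_{\mD_l}(h\mathbf{e})\norm{\nabla_{\mD_l}v}{L^2(\O)}$. Since the compactness property forces $\sup_l T_{\mD_l}(\bxi)\to0$ as $\bxi\to0$, for $h$ small enough $M:=\sup_l T_{\mD_l}(h\mathbf{e})<\infty$, and the Poincar\'e inequality follows with $C_p=NM/\sqrt2$, uniform in $l$. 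I expect this case to be routine.

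Suppose now the sequence is limit-conforming. Here the idea is to test the quantity defining $W_{\mD_l}$ against a field built from $\Pi_{\mD_l}v$. Given $v\in X_{\mD_l}$, I enclose $\overline\O$ in a smooth ball $B$, solve $-\Delta\zeta=\Pi_{\mD_l}v$ (extended by $0$) in $B$ with $\zeta=0$ on $\partial B$, and set $\vphi:=-\nabla\zeta|_\O$. Elliptic regularity on the ball gives $\zeta\in H^2(B)\cap H^1_0(B)$ with $\norm{\zeta}{H^2(B)}\le C_B\norm{\Pi_{\mD_l}v}{L^2(\O)}$; hence $\vphi\in H_\dive(\O)$, $\div\vphi=\Pi_{\mD_l}v$ a.e.\ in $\O$, and $\norm{\vphi}{H_\dive(\O)}\le C\norm{\Pi_{\mD_l}v}{L^2(\O)}$ for a constant $C$ depending only on $\O$. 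Plugging this $\vphi$ into the definition of $W_{\mD_l}$, using $\int_\O\Pi_{\mD_l}v\,\div\vphi=\norm{\Pi_{\mD_l}v}{L^2(\O)}^2$ and Cauchy--Schwarz on $\int_\O\nabla_{\mD_l}v\cdot\vphi$, I get
\[
\norm{\Pi_{\mD_l}v}{L^2(\O)}^2\le\bigl(W_{\mD_l}(\vphi)+\norm{\vphi}{L^2(\O)}\bigr)\norm{\nabla_{\mD_l}v}{L^2(\O)}.
\]

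The step I expect to be the real obstacle is the control of $W_{\mD_l}(\vphi)$: limit-conformity only provides $W_{\mD_l}(\psi)\to0$ for each \emph{fixed} $\psi$, whereas the field $\vphi=\vphi_l$ just constructed depends on $l$, so pointwise smallness is useless here. I would resolve this by observing that, for each $l$, $W_{\mD_l}$ is a lower semicontinuous seminorm on the Banach space $H_\dive(\O)$ (a supremum of continuous seminorms over the finite-dimensional $X_{\mD_l}$) and that $\sup_l W_{\mD_l}(\psi)<\infty$ for every $\psi$ since $W_{\mD_l}(\psi)\to0$; the uniform boundedness principle then yields a single constant $C^\star$ with $W_{\mD_l}(\psi)\le C^\star\norm{\psi}{H_\dive(\O)}$ for all $l$ and $\psi$. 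Applying this with $\psi=\vphi_l$ gives $W_{\mD_l}(\vphi_l)\le C^\star C\norm{\Pi_{\mD_l}v}{L^2(\O)}$; inserting this and $\norm{\vphi_l}{L^2(\O)}\le C\norm{\Pi_{\mD_l}v}{L^2(\O)}$ into the last display and dividing by $\norm{\Pi_{\mD_l}v}{L^2(\O)}$ (the case $\Pi_{\mD_l}v=0$ being trivial) gives $\norm{\Pi_{\mD_l}v}{L^2(\O)}\le C(C^\star+1)\norm{\nabla_{\mD_l}v}{L^2(\O)}$, i.e.\ $C_{\mD_l}\le C(C^\star+1)=:C_p$ for all $l$.
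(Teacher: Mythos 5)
Your proof is correct. Note that the paper itself does not prove this lemma --- it simply quotes it from \cite[Lemmas 2.6 and 2.10]{Droniou.et.al2018} --- so there is no in-paper argument to compare against; what you have done is reconstruct the two standard proofs from that monograph, and both reconstructions are sound. The compactness case via the telescoped translation by $N h\mathbf{e}$ (with $Nh$ exceeding the diameter of $\O$, so that the shifted and unshifted functions have disjoint supports and the difference has norm $\sqrt2\,\norm{\Pi_{\mD_l}v}{L^2(\R^d)}$) is exactly the classical argument. The limit-conformity case correctly identifies the one genuinely delicate point --- that $W_{\mD_l}$ must be evaluated at a field $\vphi_l$ depending on $l$, so pointwise convergence $W_{\mD_l}(\psi)\to 0$ is not directly usable --- and resolves it the same way the monograph does, via Banach--Steinhaus applied to the continuous seminorms $W_{\mD_l}$ on $H_\dive(\O)$ (each is continuous with seminorm bound $1+C_{\mD_l}$, and pointwise bounded since it tends to $0$ at every fixed $\psi$). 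A minor remark: the $H^2$ elliptic regularity on the ball is more than you need --- the energy estimate already gives $\norm{\nabla\zeta}{L^2}\le C\norm{\Pi_{\mD_l}v}{L^2}$, and $\div\vphi_l=\Pi_{\mD_l}v$ holds by construction, which suffices for the $H_\dive$ bound --- but this does not affect correctness.
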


Finally, for a given gradient discretisation $\mD$, the following dual norm  on $\Pi_{\mD}(X_{\mD,0})\subset L^2(\Omega)$ will be used to obtain estimates on the discrete time derivative, key to establishing the strong compactness of the approximate solutions:
\begin{equation}\label{def:dualnorm}
\begin{aligned}
\forall v\in {}&\Pi_{\mD}(X_{\mD,0}),\\
&|v|_{*,\mD}:= \sup\bigg\{
\int_\Omega v(\x)\,\Pi_\mD \phi(\x)d\x \,\,: \,\,
\phi\in X_{\mD,0}\,,\; \|\nabla_\mD \phi\|_{L^2(\Omega)} = 1
\bigg\}.
\end{aligned}
\end{equation}

\subsection{The main convergence result}\label{sec:main_result}
To state the main result of this paper, we introduce a space of continuous functions for the weak topology of $L^{m+1}(\Omega)$, and we define a weak solution to \eqref{eq: PME}.
\begin{align*}
 C([0,T];L^{m+1}(\Omega)_{\mathrm{w}})
 := 
 &\text{ space of functions } v:[0,T]\rightarrow L^{m+1}(\Omega)\\
	&\text{ that are continuous for the weak topology of $L^{m+1}(\Omega)$}.
\end{align*}
For a given $s\in [0,T]$, we set $\Omega_s=(0,s)\times \Omega$. We also set 
\begin{equation}\label{def:zeta}
\zeta(z):=\int_0^z \beta(s)ds=\frac{1}{m+1}|z|^{m+1}\mbox{ for all $z\in\R$}.
\end{equation}

\begin{definition}[Weak solution to \eqref{eq: PME}]\label{def: weak sol}
Assume that $m>0$, $u_0\in L^{m+1}(\Omega)$ and $f\in L^2(\Omega_T)$. A weak solution to \eqref{eq: PME} is a function $\bar u$ such that
\begin{enumerate}[label=(\roman*)]
\item $\bar u\in C([0,T];L^{m+1}(\Omega)_{\mathrm{w}})$ and $\bar u(0,\cdot)=u_0$ in $L^{m+1}(\Omega)$,
\item $\beta(\bar u)\in L^2(0,T;H^1_0(\Omega))$, $\zeta(\bar u)\in L^{\infty}(0,T;L^1(\Omega))$, 
\item $\partial_t \bar u\in L^{2}(0,T;H^{-1}(\Omega))$, 
and for any 
$\phi\in L^2(0,T;H^1_0(\Omega))$
\begin{equation}\label{eq: weak sol}
\int_0^T\, _{H^{-1}}\langle \partial_t \bar u(t),\phi(t) \rangle_{H^1_0} \,dt
+
\langle \Lambda(\cdot,\bar{u})\nabla \beta(\bar u),\nabla \phi\rangle_{L^2(\Omega_T)}
=\langle f,\phi\rangle_{L^2(\Omega_T)}.
\end{equation}
\end{enumerate}
\end{definition}

The existence of a weak solution to \eqref{eq: PME} will be obtained as a by-product of our convergence analysis.

\begin{theorem}[Convergence of the gradient scheme]\label{theo:cv.GS}
Let $(\mD_l)_{l\ge 1}$ be a sequence of gradient discretisations that is consistent, limit-conforming and compact. Then, for each $l\ge 1$ there exists $u_l$ solution to the gradient scheme \eqref{eq: gdmscheme} with $\mD=\mD_l$. Moreover, there exists a  weak solution $\bar u$ to \eqref{eq: PME} in the sense of Definition~\ref{def: weak sol} such that, up to a subsequence as $l\to\infty$,
\begin{itemize}
\item $\Pi_{\mD_l}u_l\to \bar u$ strongly in $L^\infty(0,T;L^{m+1}(\Omega))$,
\item $\nabla_{\mD_l}\beta(u_l)\to \nabla \beta(\bar u)$ strongly in $L^2(\Omega_T)$.
\end{itemize}
\end{theorem}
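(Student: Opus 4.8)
The plan is to follow the classical GDM strategy for degenerate parabolic equations, adapted to the non-Lipschitz nonlinearity $\beta(u)=|u|^{m-1}u$. First I would establish existence of a discrete solution $u_l$ for each fixed $l$: the scheme \eqref{eq: gdmscheme} is a system of nonlinear algebraic equations for $(u^{(n)})_n$, solved step by step in time; at each step one shows solvability by a topological degree / Brouwer argument, using the a priori estimate on $\|\nabla_{\mD_l}\beta(u^{(n+1)})\|_{L^2(\Omega)}$ obtained by testing with $\phi=\beta(u^{(n+1)})$ (i.e. the natural ``pressure'' test function) and the coercivity/ellipsis bound \eqref{eq:hyp.Lambda}.

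Next I would derive the uniform a priori estimates (Section \ref{sec:apriori}). Testing the scheme with $\phi=\beta(u)$ and using the convexity inequality $\zeta(b)-\zeta(a)\le \beta(b)(b-a)$ together with \eqref{eq:hyp.Lambda} yields, after discrete integration by parts in time, a bound of the form
\[
\sup_n \int_\Omega \zeta(\Pi_{\mD_l}u_l^{(n)})\,d\x \;+\; \underline\lambda\,\|\nabla_{\mD_l}\beta(u_l)\|_{L^2(\Omega_T)}^2 \;\le\; C,
\]
where $C$ depends only on $\|u_0\|_{L^{m+1}(\Omega)}$, $\|f\|_{L^2(\Omega_T)}$, $C_p$, and the consistency of $\mI_{\mD_l}$. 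This gives uniform boundedness of $\Pi_{\mD_l}u_l$ in $L^\infty(0,T;L^{m+1}(\Omega))$ and of $\nabla_{\mD_l}\beta(u_l)$ in $L^2(\Omega_T)$; via coercivity, $\Pi_{\mD_l}\beta(u_l)$ is bounded in $L^2(\Omega_T)$. I would also estimate the discrete time derivative $\delta_{\mD_l}u_l$ in the dual norm \eqref{def:dualnorm} (hence in a discrete $L^2(0,T;H^{-1})$-type norm), and additionally control a ``translate-in-time'' quantity $\int_0^{T-\tau}\langle \Pi_{\mD}u(t+\tau)-\Pi_{\mD}u(t),\,\Pi_{\mD}\beta(u)(t+\tau)-\Pi_{\mD}\beta(u)(t)\rangle\,dt$, which is the key ingredient for time-compactness of $\Pi_{\mD_l}u_l$ when $\beta$ is only monotone (not Lipschitz).

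Then comes the initial (weak) convergence (Section \ref{sec:init_conv}). Using the space-compactness estimate $T_{\mD_l}$, the time-translate estimate, and a discrete Aubin--Simon / Kolmogorov compactness argument (from \cite{Droniou.et.al2018}), I extract a subsequence such that $\Pi_{\mD_l}u_l\to\bar u$ strongly in $L^2(\Omega_T)$ (hence a.e., and by the uniform $L^{m+1}$ bound also strongly in $L^p(\Omega_T)$ for $p<m+1$), $\nabla_{\mD_l}\beta(u_l)\rightharpoonup G$ weakly in $L^2(\Omega_T)$, and — by continuity of $\beta$ and a.e. convergence plus equi-integrability — $\Pi_{\mD_l}\beta(u_l)\to\beta(\bar u)$, so that by limit-conformity $G=\nabla\beta(\bar u)$ and $\beta(\bar u)\in L^2(0,T;H^1_0(\Omega))$. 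Passing to the limit in \eqref{eq: gdmscheme} with smooth-in-time, consistently-interpolated test functions (using consistency of $\hat S_{\mD_l}$ with the exponent $1+\ms$ and limit-conformity for the diffusion term, where the a.e. convergence of $\Pi_{\mD_l}u_l$ handles the $\Lambda(\cdot,\Pi_{\mD_l}u_l)$ factor via dominated convergence) shows $\bar u$ satisfies \eqref{eq: weak sol}; the discrete integration-by-parts identity and consistency of the initial interpolation give $\partial_t\bar u\in L^2(0,T;H^{-1})$ and $\bar u(0,\cdot)=u_0$, and $\bar u\in C([0,T];L^{m+1}(\Omega)_{\mathrm w})$ follows from $\bar u\in L^\infty(0,T;L^{m+1})$ with a weak-in-space time derivative.

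Finally, the uniform-in-time strong convergence (Section \ref{sec:unif_conv}), which is the main obstacle. The strategy is a discrete-to-continuous energy argument: one proves a \emph{discrete energy equality} for the scheme, controlling $\int_\Omega\zeta(\Pi_{\mD_l}u_l(t))\,d\x$ for all $t$ (not merely its $\sup$), passes to the limit to obtain the continuous energy identity $\int_\Omega\zeta(\bar u(t))\,d\x = \int_\Omega\zeta(u_0)\,d\x - \int_0^t\langle\Lambda\nabla\beta(\bar u),\nabla\beta(\bar u)\rangle + \int_0^t\langle f,\beta(\bar u)\rangle$, and then combines: (i) $\Pi_{\mD_l}u_l(t)\rightharpoonup\bar u(t)$ weakly in $L^{m+1}(\Omega)$ uniformly in $t$ (from the dual-norm time-derivative bound plus a discrete Arzelà--Ascoli / ``uniform-in-time weak convergence'' lemma of \cite{Droniou.et.al2018}), with (ii) convergence of the energies $\int_\Omega\zeta(\Pi_{\mD_l}u_l(t))\,d\x\to\int_\Omega\zeta(\bar u(t))\,d\x$ uniformly in $t$. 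Since $\zeta(z)=\frac1{m+1}|z|^{m+1}$ is strictly convex, weak convergence in $L^{m+1}$ together with convergence of the $L^{m+1}$-norms upgrades to strong convergence in $L^{m+1}(\Omega)$, uniformly in $t$ — i.e. in $L^\infty(0,T;L^{m+1}(\Omega))$. The delicate points are the uniform-in-time convergence of the energy (one must handle the time-discretisation error in the discrete energy balance carefully, without a Lipschitz bound on $\beta$), and justifying the limit of $\int_\Omega\zeta(\Pi_{\mD_l}u_l(t))$ at \emph{every} $t$ rather than a.e.; here the monotone structure $\zeta(b)-\zeta(a)\le\beta(b)(b-a)$ and the already-established strong $L^2(\Omega_T)$ convergence of $\Pi_{\mD_l}u_l$ and $\Pi_{\mD_l}\beta(u_l)$ are used to squeeze the discrete energies between liminf/limsup of continuous quantities. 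Strong $L^2(\Omega_T)$ convergence of $\nabla_{\mD_l}\beta(u_l)$ then follows from the weak convergence together with convergence of the $L^2$-norms of the gradients, which is read off from the limiting energy equality.
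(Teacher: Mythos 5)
Your overall architecture coincides with the paper's: a priori estimates from testing with $\beta(u)$, a dual-norm bound on the discrete time derivative, time-translate compactness, Kolmogorov plus a discontinuous Ascoli--Arzel\`a theorem, Minty's trick, passage to the limit in the scheme, and finally an energy argument combined with uniform convexity of $L^{m+1}(\Omega)$ to upgrade the uniform-in-time weak convergence to strong convergence. However, there is a genuine gap at the crux of the last step. You invoke a ``discrete energy equality'' and propose to obtain the continuous energy identity by passing to the limit in it. No such discrete equality exists: the implicit Euler chain rule only gives the one-sided convexity inequality $\zeta(b)-\zeta(a)\le(b-a)\beta(b)$, so the scheme yields only the energy \emph{inequality} \eqref{eq: priori1}, and its limit can only produce an upper bound on $\limsup_{l}\int_\Omega\zeta(\Pi_{\mD_l}u_l(T_l,\x))\,d\x$. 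To close the squeeze against the lower bound coming from weak convergence and convexity, you need the exact continuous energy \emph{equality} of Lemma \ref{lem: energy eq}, and this must be established independently at the PDE level. That is itself delicate: since $\partial_t\bar u$ lives only in $L^{2}(0,T;H^{-1}(\Omega))$ and $\beta$ is not Lipschitz, the identity $\int_0^{T_0}\langle \partial_t\bar u,\beta(\bar u)\rangle\,dt=\int_\Omega\zeta(\bar u(T_0,\x))\,d\x-\int_\Omega\zeta(u_0(\x))\,d\x$ is not a standard chain rule; the paper proves ``$\le$'' via difference quotients and convexity, and then obtains ``$\ge$'' by applying the same inequality to the time-reversed function $v(t)=\bar u(T_0-t)$. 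This step is entirely absent from your plan and cannot be replaced by a limit of discrete identities.

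Two secondary points. First, the strong $L^2(\Omega_T)$ convergence of $\Pi_{\mD_l}u_l$ that you claim does not follow for $m<1$: the Kolmogorov compactness applies to $\Pi_{\mD_l}\beta(u_l)$ (whose discrete gradient is bounded), while $u_l$ is only bounded in $L^\infty(0,T;L^{m+1}(\Omega))$ with $m+1<2$; what one actually extracts, and what suffices downstream, is a.e.\ convergence of $\Pi_{\mD_l}u_l$, obtained by inverting the strictly increasing $\beta$. Second, your time-translate estimate on the pairing $(b-a)(\beta(b)-\beta(a))$ must still be converted into a translate estimate on $\beta(u_l)$ itself; since $\beta$ is not globally Lipschitz this requires the truncations $\beta_k^{\f}$ and $\beta_k^{\s}$ of \eqref{def:cutoff.beta}--\eqref{def:cutoff.beta2}, Lemma \ref{lem: betak}, and an optimisation over $k$ --- the main technical novelty of the paper relative to the globally Lipschitz case --- which your plan does not supply.
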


\begin{remark}[Other nonlinearities]
The analysis that leads to this convergence result can be adapted to other forms of nonlinearities than $\beta(u)=|u|^{m-1}u$. We recall that the case of non-decreasing --possibly with plateaux-- nonlinearities that are globally Lipschitz-continuous is treated in \cite{Droniou2016}; for non-decreasing functions $\beta$ that are locally Lipschitz-continuous except at a finite number of points and/or at $\pm\infty$, the technique we develop below, based on the cutoff functions \eqref{def:cutoff.beta} and \eqref{def:cutoff.beta2}, can easily be adapted to obtain the required compactness results. In this case, the convergence $\Pi_{\mD_l}u_l\to \bar u$ strongly in $L^\infty(0,T;L^{m+1}(\Omega))$ has to be replaced with the convergence $\int_\Omega \zeta(\Pi_{\mD_l}u_l(\cdot,\x))\d\x\to \int_\Omega\zeta(u(\cdot,\x))d\x$ uniformly on $[0,T]$ (where we recall that $\zeta(z)=\int_0^z \beta(s)ds$). In case of non-linearities $\beta$ with plateaux, though, the dependency of $\Lambda$ has to be through $\beta(u)$; \cite{Droniou2016} for details.
\end{remark}


\section{A priori estimates}\label{sec:apriori}

We first provide a priori estimates for the solution $u$ to~\eqref{eq: gdmscheme}, and then deduce its existence in Corollary~\ref{co: existgdm}. For legibility, we drop the index $l$ in sequences of gradient discretisations, and we simply write $\disc$ instead of $\disc_l$.

\begin{lemma}\label{lem: priori}
Let $\zeta$ be defined by \eqref{def:zeta}, and $u$ be a solution of~\eqref{eq: gdmscheme}. Then, for any integer number $k\in [1,N]$, we have
\begin{equation}\label{eq: priori1}
\begin{aligned}
\int_\Omega \zeta(\Pi_\mD u^{(k)})(\x)d\x
+ &\big\langle \Lambda(\cdot,\Pi_\mD u)\nabla_\mD\beta(u),
\nabla_\mD\beta(u)\big\rangle_{\Omega_{t^{(k)}}}\\
&\leq \int_\Omega\zeta(\Pi_\mD u^{(0)})(\x)d\x +\big\langle f,  \Pi_\mD \beta(u)\big\rangle_{L^2(\Omega_{t^{(k)}})}.
\end{aligned}
\end{equation}
Consequently, there exists a constant $C>0$ depending only on $\underline{\lambda}$, $f$, $C_p\geq C_{\mD}$  and $C_{\mathrm{ini}}\ge\|\Pi_\mD \mI_\mD u_0\|_{L^{m+1}(\Omega)}$  such that 
\begin{align}\label{eq: priori2}
\|\Pi_\mD u\|_{L^\infty(0,T;L^{m+1}(\Omega))}
+
\|\Pi_\mD \zeta( u)\|_{L^\infty(0,T;L^1(\Omega))}
+ \|\nabla_\mD\beta(u)\|_{L^2(\Omega_T)}
\leq 
C.
\end{align}
\end{lemma}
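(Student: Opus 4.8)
The natural approach is the discrete analogue of multiplying \eqref{eq: PME} by $\beta(u)$ and integrating by parts in space-time. Concretely, I would test the gradient scheme \eqref{eq: gdmscheme}, restricted to time levels $n=0,\dots,k-1$, with $\phi^{(n+1)}=\beta(u^{(n+1)})$ (and $\phi^{(0)}$ irrelevant since only increments appear). The diffusion term immediately produces $\langle \Lambda(\cdot,\Pi_\mD u)\nabla_\mD\beta(u),\nabla_\mD\beta(u)\rangle_{\Omega_{t^{(k)}}}$ and the right-hand side gives $\langle f,\Pi_\mD\beta(u)\rangle_{L^2(\Omega_{t^{(k)}})}$. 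The only point that needs care is the time-derivative term: one must show the discrete ``integration by parts in time'' inequality
\[
\big\langle \delta_\mD u,\Pi_\mD\beta(u)\big\rangle_{L^2(\Omega_{t^{(k)}})}
=\sum_{n=0}^{k-1}\int_\Omega (\Pi_\mD u^{(n+1)}-\Pi_\mD u^{(n)})\,\beta(\Pi_\mD u^{(n+1)})\,d\x
\ge \int_\Omega\zeta(\Pi_\mD u^{(k)})\,d\x-\int_\Omega\zeta(\Pi_\mD u^{(0)})\,d\x,
\]
where I have used the piecewise-constant commutation \eqref{PiD:commute}, $\beta(\Pi_\mD u^{(n+1)})=\Pi_\mD\beta(u^{(n+1)})$. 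Pointwise in $\x$ this is the elementary convexity estimate $(b-a)\beta(b)\ge \zeta(b)-\zeta(a)$, valid because $\zeta'=\beta$ and $\zeta$ is convex ($\beta$ nondecreasing); summing over $n$ telescopes the right-hand side. Combining these three facts gives exactly \eqref{eq: priori1}.

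From \eqref{eq: priori1} the uniform bound \eqref{eq: priori2} follows by standard manipulations. Using the lower ellipticity bound in \eqref{eq:hyp.Lambda}, $\underline\lambda\|\nabla_\mD\beta(u)\|_{L^2(\Omega_{t^{(k)}})}^2\le \big\langle\Lambda(\cdot,\Pi_\mD u)\nabla_\mD\beta(u),\nabla_\mD\beta(u)\big\rangle_{\Omega_{t^{(k)}}}$, and on the right-hand side I would bound $\langle f,\Pi_\mD\beta(u)\rangle_{L^2(\Omega_{t^{(k)}})}$ by Cauchy--Schwarz, then the coercivity constant to pass from $\|\Pi_\mD\beta(u)\|_{L^2}$ to $\|\nabla_\mD\beta(u)\|_{L^2}$ (here $C_{\mD}\le C_p$ enters), and finally a Young inequality $ab\le \tfrac{\underline\lambda}{2}a^2+\tfrac{1}{2\underline\lambda}b^2$ to absorb the gradient term into the left-hand side. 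This yields $\|\nabla_\mD\beta(u)\|_{L^2(\Omega_T)}\le C$ after taking $k=N$, and simultaneously a bound on $\sup_k\int_\Omega\zeta(\Pi_\mD u^{(k)})\,d\x$, since the initial term $\int_\Omega\zeta(\Pi_\mD u^{(0)})\,d\x=\tfrac{1}{m+1}\|\Pi_\mD\mI_\mD u_0\|_{L^{m+1}(\Omega)}^{m+1}\le \tfrac{1}{m+1}C_{\mathrm{ini}}^{m+1}$ is controlled by hypothesis. Because $\zeta(z)=\tfrac{1}{m+1}|z|^{m+1}$, the bound on $\sup_k\int_\Omega\zeta(\Pi_\mD u^{(k)})$ is the same as a bound on $\|\Pi_\mD u\|_{L^\infty(0,T;L^{m+1}(\Omega))}^{m+1}$ and on $\|\Pi_\mD\zeta(u)\|_{L^\infty(0,T;L^1(\Omega))}$ (again via \eqref{PiD:commute}), which gives all three terms of \eqref{eq: priori2}.

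I do not expect a serious obstacle here; the only mildly delicate point is the bookkeeping in the Young-inequality step, where one should be slightly careful that the constant $C$ depends only on the advertised quantities $\underline\lambda,f,C_p,C_{\mathrm{ini}}$ and not on $\mD$ or $\overline\lambda$ — this is automatic since $\overline\lambda$ never appears once the lower bound in \eqref{eq:hyp.Lambda} is used and the coercivity estimate is invoked with the uniform constant $C_p$. A secondary technical remark is that testing with $\phi=\beta(u)$ is legitimate because $\beta(0)=0$, so $\beta(u^{(n)})\in X_{\mD,0}$ is well-defined by the convention preceding \eqref{PiD:commute}. Existence of a solution $u$ to \eqref{eq: gdmscheme} (Corollary~\ref{co: existgdm}) would then follow from these a priori bounds by a topological-degree or Brouwer fixed-point argument on the finite-dimensional nonlinear system, the estimate \eqref{eq: priori2} providing the needed coercivity/a priori control.
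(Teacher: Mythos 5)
Your proposal is correct and follows essentially the same route as the paper: the same test function $\phi^{(n+1)}=\beta(u^{(n+1)})$, the same convexity inequality $\zeta(b)-\zeta(a)\le(b-a)\beta(b)$ with telescoping for \eqref{eq: priori1}, and the same ellipticity--coercivity--Young absorption (with $\zeta\ge 0$) for \eqref{eq: priori2}. Your remarks on the admissibility of $\beta(u)$ as a test function (via $\beta(0)=0$) and on the dependence of $C$ are accurate and consistent with the paper.
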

\begin{proof}
We choose the test function $\phi = \bigl(\beta(u^{(0)}),\cdots,\beta(u^{(k)}),0,\cdots,0\bigr)\subset X_{\mD,0}$ in~\eqref{eq: gdmscheme} to write
\begin{equation}\label{eq: pri1}
\begin{aligned}
\big\langle \delta_\mD u,\Pi_\mD \beta(u)\big\rangle_{L^2(\Omega_{t^{(k)}})}
&{}+
\big\langle \Lambda(\cdot,\Pi_\mD u)\nabla_\mD\beta(u),
\nabla_\mD\beta(u)\big\rangle_{\Omega_{t^{(k)}}}\\
={}&
\big\langle f,  \Pi_\mD \beta(u)\big\rangle_{L^2(\Omega_{t^{(k)}})}.
\end{aligned}
\end{equation}
For any $n = 0,\cdots,N-1$ and $t\in (t^{(n)},t^{(n+1)}]$, we estimate the first term in the left hand side of~\eqref{eq: pri1}, starting from
\begin{align}\label{eq: pri2}
\delta_\mD u (t)\,\Pi_\mD \beta(u^{(n+1)})
=
\frac{1}{\delta t^{(n+\frac12)}}
\big(\Pi_\mD u^{(n+1)} - \Pi_\mD u^{(n)}
\big)\beta(\Pi_\mD u^{(n+1)}).
\end{align}
Since $\beta$ is increasing,
 $\zeta$ is convex and thus above its tangent line, which implies $\zeta(b)-\zeta(a)\leq (b-a)\beta(b)$ for all $a,b\in\R$.
Applying this inequality with $a =\Pi_\mD u^{(n)}$ and $b =\Pi_\mD u^{(n+1)}$, it follows from~\eqref{eq: pri2} that
\begin{equation}\label{eq: pri3}
\frac{1}{\delta t^{(n+\frac12)} }
\big(\zeta(\Pi_\mD u^{(n+1)})-\zeta(\Pi_\mD u^{(n)}) \big)
\leq \delta_\mD u (t)\,\Pi_\mD \beta(u^{(n+1)}).
\end{equation}
Plugging that in~\eqref{eq: pri1} and using a telescopic sum yields \eqref{eq: priori1}.
The \emph{a priori} estimates \eqref{eq: priori2} follow from this relation and \eqref{eq:hyp.Lambda}, by writing
\begin{align*}
\int_\Omega \zeta(\Pi_\mD u^{(k)})(\x)d\x
{}&+ \underline{\lambda}\|\nabla_\mD\beta(u)\|_{L^2(\Omega_{t^{(k)}})}^2\\
\leq{}&\int_\Omega \zeta(\Pi_\mD u^{(k)})(\x)d\x
+ \big\langle \Lambda(\cdot,\Pi_\mD u)\nabla_\mD\beta(u),
\nabla_\mD\beta(u)\big\rangle_{\Omega_{t^{(k)}}}\\
\leq{}&
\int_\Omega\zeta(\Pi_\mD u^{(0)})(\x)d\x
+
\big\langle f,  \Pi_\mD \beta(u)\big\rangle_{L^2(\Omega_{t^{(k)}})}\\
\leq{}&
\int_\Omega\zeta(\Pi_\mD u^{(0)})(\x)d\x
+\frac{C_p^2}{2\underline{\lambda}}\|f\|^2_{L^2(\Omega_{t^{(k)}})}+\frac{\underline{\lambda}}{2C_p^2}\|\Pi_\mD \beta(u)\|^2_{L^2(\Omega_{t^{(k)}})}.
\end{align*}
Noting that $\zeta(a) = \frac{|a|^{m+1}}{(m+1)}\ge 0$ for all $a\in \R$ and recalling that
\begin{equation}\label{eq: coer}
\|\Pi_\mD \beta(u)\|_{L^2(\Omega_{t^{(k)}})}
\leq C_\mD \|\nabla_\mD \beta(u)\|_{L^2(\Omega_{t^{(k)}})}\le C_p \|\nabla_\mD \beta(u)\|_{L^2(\Omega_{t^{(k)}})},
\end{equation}
we obtain the estimates in \eqref{eq: priori2}.
\end{proof}
The following corollary guarantees the existence of a solution to~\eqref{eq: gdmscheme}.
\begin{corollary}\label{co: existgdm}
 If $\mD$ is a gradient discretisation in the sense of Definition~\ref{def: gdm} 
then there exists at least one solution to the gradient scheme~\eqref{eq: gdmscheme}.
\end{corollary}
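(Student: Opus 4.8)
The plan is to construct $u=(u^{(n)})_{n=0,\dots,N}$ by induction on the time index. Taking in \eqref{eq: gdmscheme} test functions $\phi$ that vanish at all but one time index shows that the gradient scheme is equivalent to: $u^{(0)}:=\mI_\mD u_0$, and, for $n=0,\dots,N-1$, knowing $u^{(n)}$, finding $u^{(n+1)}\in X_{\mD,0}$ such that, for all $\phi\in X_{\mD,0}$,
\[
\frac{1}{\delta t^{(n+\frac12)}}\big\langle \Pi_\mD u^{(n+1)}-\Pi_\mD u^{(n)},\,\Pi_\mD\phi\big\rangle_{L^2(\Omega)}
+\big\langle \Lambda(\cdot,\Pi_\mD u^{(n+1)})\nabla_\mD\beta(u^{(n+1)}),\,\nabla_\mD\phi\big\rangle_{L^2(\Omega)}
=\big\langle f^{(n+1)},\,\Pi_\mD\phi\big\rangle_{L^2(\Omega)},
\]
where $f^{(n+1)}\in L^2(\Omega)$ depends only on the data. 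Since $u^{(0)}$ is given, it suffices to solve this one-step problem for each $n$. The key idea is to run the fixed-point argument not in the unknown $u^{(n+1)}$ but in $w:=\beta(u^{(n+1)})$: because $\beta(s)=|s|^{m-1}s$ and $\beta^{-1}(s)=|s|^{1/m-1}s$ are strictly increasing homeomorphisms of $\R$ vanishing at $0$ (for every $m>0$), the induced componentwise maps $v\mapsto\beta(v)$ and $w\mapsto\beta^{-1}(w)$ are mutually inverse homeomorphisms of $X_{\mD,0}$; using \eqref{PiD:commute} and $\nabla_\mD w=\nabla_\mD\beta(u^{(n+1)})$, the one-step problem becomes: find $w\in X_{\mD,0}$ with
\[
\frac{1}{\delta t^{(n+\frac12)}}\big\langle \Pi_\mD\beta^{-1}(w)-\Pi_\mD u^{(n)},\,\Pi_\mD\phi\big\rangle_{L^2(\Omega)}
+\big\langle \Lambda(\cdot,\Pi_\mD\beta^{-1}(w))\nabla_\mD w,\,\nabla_\mD\phi\big\rangle_{L^2(\Omega)}
=\big\langle f^{(n+1)},\,\Pi_\mD\phi\big\rangle_{L^2(\Omega)}
\]
for all $\phi\in X_{\mD,0}$.

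I would then equip the finite-dimensional space $X_{\mD,0}$ with the inner product $\langle\langle v,\phi\rangle\rangle:=\langle\nabla_\mD v,\nabla_\mD\phi\rangle_{L^2(\Omega)}$ (a genuine inner product by item~(iii) of Definition~\ref{def: gdm}) and define, via the Riesz representation theorem, the map $\mathcal G:X_{\mD,0}\to X_{\mD,0}$ characterised by $\langle\langle\mathcal G(w),\phi\rangle\rangle=\text{(left-hand side)}-\text{(right-hand side)}$ of the $w$-equation, for all $\phi$; its zeros are exactly the solutions of the one-step problem. Continuity of $\mathcal G$ follows from that of $w\mapsto\beta^{-1}(w)$ on $X_{\mD,0}$, the linearity of $\Pi_\mD$ and $\nabla_\mD$, and the continuity of $w\mapsto\Lambda(\cdot,\Pi_\mD\beta^{-1}(w))\nabla_\mD w$ into $L^2(\Omega)^d$, the latter using that $\Lambda$ is continuous in its second variable, the bound \eqref{eq:hyp.Lambda}, dominated convergence, and the fact that $\Pi_\mD v$ takes only finitely many values. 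To conclude by the classical corollary of Brouwer's theorem --- a continuous self-map $\mathcal G$ of a finite-dimensional Hilbert space such that $\langle\langle\mathcal G(w),w\rangle\rangle\ge 0$ whenever $\|w\|=R$ admits a zero in the ball of radius $R$ --- I would test with $\phi=w$: writing $g:=\Pi_\mD\beta^{-1}(w)$, so that $\Pi_\mD w=\beta(g)$ by \eqref{PiD:commute}, the convexity inequality $\zeta(b)-\zeta(a)\le(b-a)\beta(b)$ (as used in the proof of Lemma~\ref{lem: priori}) together with $\zeta\ge 0$ bounds the accumulation term below by $-\frac{1}{\delta t^{(n+\frac12)}}\int_\Omega\zeta(\Pi_\mD u^{(n)})$; the ellipticity in \eqref{eq:hyp.Lambda} bounds the diffusion term below by $\underline\lambda\|\nabla_\mD w\|_{L^2(\Omega)}^2$; and Cauchy--Schwarz together with \eqref{eq: coer} (note $C_\mD<\infty$ since $\|\nabla_\mD\cdot\|_{L^2(\Omega)}$ is a norm on the finite-dimensional $X_{\mD,0}$) bounds the source term by $C_\mD\|f^{(n+1)}\|_{L^2(\Omega)}\|\nabla_\mD w\|_{L^2(\Omega)}$. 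Hence $\langle\langle\mathcal G(w),w\rangle\rangle\ge\underline\lambda R^2-C_\mD\|f^{(n+1)}\|_{L^2(\Omega)}R-\frac{1}{\delta t^{(n+\frac12)}}\int_\Omega\zeta(\Pi_\mD u^{(n)})$ on $\{\|\nabla_\mD w\|_{L^2(\Omega)}=R\}$, which is positive once $R$ is large.

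Brouwer's corollary then provides $w^{(n+1)}$ with $\mathcal G(w^{(n+1)})=0$, and $u^{(n+1)}:=\beta^{-1}(w^{(n+1)})$ solves the one-step problem; the induction yields the desired solution of \eqref{eq: gdmscheme}. The main obstacle I anticipate is conceptual rather than computational: the coercivity built into the scheme is visible only when it is tested with $\beta(u^{(n+1)})$, not with $u^{(n+1)}$ itself, so a naive degree argument in the variable $u^{(n+1)}$ does not close; the change of unknown $w=\beta(u^{(n+1)})$ is precisely what converts the diffusion term into the coercive quadratic form $\langle\Lambda(\cdot,\Pi_\mD\beta^{-1}(w))\nabla_\mD w,\nabla_\mD w\rangle$ while keeping the accumulation term sign-controlled via the convexity of $\zeta$. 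Once this is in place, the remaining points --- continuity of the fixed-point map and verification of the Brouwer hypothesis --- are routine, and the degeneracy ($m>1$) or singularity ($m<1$) of $\beta$ plays no role at the discrete level since $\beta$ and $\beta^{-1}$ are continuous on all of $\R$.
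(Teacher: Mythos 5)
Your proof is correct: the decoupling into one-step problems, the substitution $w=\beta(u^{(n+1)})$, the continuity of the Riesz-represented map $\mathcal G$, and the sign estimate on the sphere (accumulation term bounded below by $-\frac{1}{\delta t^{(n+\frac12)}}\int_\Omega\zeta(\Pi_\mD u^{(n)})$ via $\zeta(b)-\zeta(a)\le(b-a)\beta(b)$ and $\zeta\ge 0$, diffusion term coercive by \eqref{eq:hyp.Lambda}, source term controlled by $C_\mD$) all hold, and the acute-angle corollary of Brouwer's theorem closes the argument. The paper itself gives essentially no proof: it invokes the a priori estimates \eqref{eq: priori2}, the identities $|\beta(u)|=|u|^m$ and $u\beta(u)=|u|^{m+1}$, and refers to the argument of \cite[Corollary 4.2]{Droniou2016}, which is a topological degree argument --- one deforms the nonlinear one-step problem to a linear one along a homotopy and uses the uniform a priori bounds to confine all solutions to a fixed ball, so that the degree is preserved and nonzero. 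The two routes differ in a meaningful way: the degree argument never inverts $\beta$, which is essential in the setting of \cite{Droniou2016} where $\beta$ may have plateaux and $\beta^{-1}$ does not exist; your change of unknown exploits the fact that here $\beta(s)=|s|^{m-1}s$ is a strict homeomorphism of $\R$ for every $m>0$, which turns the diffusion term into a genuinely coercive quadratic form in $w$ and lets you replace the homotopy by a single application of the Brouwer corollary. What you gain is a shorter, self-contained argument; what the paper's cited route gains is robustness to degenerate (non-invertible) nonlinearities, consistent with the remark after Theorem \ref{theo:cv.GS} that the analysis extends to nonlinearities with plateaux.
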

\begin{proof}
The result is obtained using~\eqref{eq: priori2}, the properties $|\beta(u)|= |u|^m$ and $u\beta(u)= |u|^{m+1}$,
and the same arguments as in~\cite[Corrollary 4.2]{Droniou2016} 
\end{proof}

In the following lemma, we estimate the dual norm \eqref{def:dualnorm} of the discrete time derivative $\delta_\mD u$. 
\begin{lemma}\label{lem: dualnorm}
There exists a constant $C$  depending on $\overline{\lambda}$, $\underline{\lambda}$, $f$, $C_p\geq C_{\mD}$ and $C_{\mathrm{ini}}\ge\|\Pi_\mD \mI_\mD u_0\|_{L^{m+1}(\Omega)}$ such that
\[
\int_0^T|\delta_\mD u(t)|^2_{*,\mD}dt\leq C.
\]
\end{lemma}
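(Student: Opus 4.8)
The plan is to bound $|\delta_\mD u(t)|_{*,\mD}$ at each time step by testing the scheme with an arbitrary $\phi\in X_{\mD,0}$ and then taking the supremum over $\|\nabla_\mD\phi\|_{L^2(\Omega)}=1$. Fix $n\in\{0,\dots,N-1\}$ and $t\in(t^{(n)},t^{(n+1)}]$, so that $\delta_\mD u(t)=\delta_\mD^{(n+\frac12)}u$. Writing the scheme \eqref{eq: gdmscheme} at a single time step (using a test function supported on step $n+1$ only), one gets, for all $\phi\in X_{\mD,0}$,
\[
\int_\Omega \delta_\mD^{(n+\frac12)}u\,\Pi_\mD\phi\,\d\x
= \langle f^{(n+1)},\Pi_\mD\phi\rangle_{L^2(\Omega)}
- \langle \Lambda(\cdot,\Pi_\mD u^{(n+1)})\nabla_\mD\beta(u^{(n+1)}),\nabla_\mD\phi\rangle_{L^2(\Omega)},
\]
where $f^{(n+1)}$ is the time-average of $f$ over $(t^{(n)},t^{(n+1)})$. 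Bounding the right-hand side: the first term is $\le \|f^{(n+1)}\|_{L^2(\Omega)}\|\Pi_\mD\phi\|_{L^2(\Omega)}\le C_p\|f^{(n+1)}\|_{L^2(\Omega)}\|\nabla_\mD\phi\|_{L^2(\Omega)}$ by coercivity, while the second is $\le \overline\lambda\|\nabla_\mD\beta(u^{(n+1)})\|_{L^2(\Omega)}\|\nabla_\mD\phi\|_{L^2(\Omega)}$ by \eqref{eq:hyp.Lambda} and Cauchy--Schwarz. Taking the supremum over $\phi$ with $\|\nabla_\mD\phi\|_{L^2(\Omega)}=1$ gives
\[
|\delta_\mD u(t)|_{*,\mD}\le C_p\|f^{(n+1)}\|_{L^2(\Omega)}+\overline\lambda\|\nabla_\mD\beta(u^{(n+1)})\|_{L^2(\Omega)}
\quad\text{for }t\in(t^{(n)},t^{(n+1)}].
\]

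Next I would square this, use $(a+b)^2\le 2a^2+2b^2$, and integrate over $(0,T)$. Since on $(t^{(n)},t^{(n+1)}]$ both the right-hand quantities are constant in $t$, integrating in time reproduces the piecewise-constant-in-time norms: $\int_0^T\|\nabla_\mD\beta(u^{(n+1)})\|^2_{L^2(\Omega)}$ summed with weights $\delta t^{(n+\frac12)}$ is exactly $\|\nabla_\mD\beta(u)\|_{L^2(\Omega_T)}^2$, which is bounded by $C^2$ from \eqref{eq: priori2}; similarly $\sum_n \delta t^{(n+\frac12)}\|f^{(n+1)}\|_{L^2(\Omega)}^2\le \|f\|_{L^2(\Omega_T)}^2$ by Jensen's inequality applied to the time-averaging defining $f^{(n+1)}$. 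Combining,
\[
\int_0^T|\delta_\mD u(t)|^2_{*,\mD}\,\d t
\le 2C_p^2\|f\|_{L^2(\Omega_T)}^2 + 2\overline\lambda^2\|\nabla_\mD\beta(u)\|_{L^2(\Omega_T)}^2
\le 2C_p^2\|f\|_{L^2(\Omega_T)}^2 + 2\overline\lambda^2 C^2 =: C,
\]
which is the desired bound, with the new constant depending only on $\overline\lambda$, $\underline\lambda$, $f$, $C_p$ and $C_{\mathrm{ini}}$ (the latter two entering through the constant $C$ of Lemma~\ref{lem: priori}).

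I do not expect any genuine obstacle here; the only points requiring mild care are (i) ensuring the dual norm is evaluated only on $\Pi_\mD(X_{\mD,0})$, which is automatic since $\delta_\mD^{(n+\frac12)}u$ is a difference of two such elements, and (ii) handling the source term cleanly — whether one introduces the time-averaged $f^{(n+1)}$ explicitly or keeps the integral form $\int_{t^{(n)}}^{t^{(n+1)}}\langle f,\Pi_\mD\phi\rangle$ and applies Cauchy--Schwarz in time first; either route gives the same bound via Jensen. This is essentially the standard dual-norm estimate for gradient schemes (cf.\ \cite{Droniou.et.al2018, Droniou2016}), the only novelty being that $\beta(u)$, rather than $u$, is the diffused quantity, which is harmless since the a priori bound \eqref{eq: priori2} already controls $\|\nabla_\mD\beta(u)\|_{L^2(\Omega_T)}$.
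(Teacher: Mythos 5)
Your proposal is correct and follows essentially the same route as the paper: test the scheme with a single-time-step test function, bound the flux term by $\overline\lambda\|\nabla_\mD\beta(u^{(n+1)})\|_{L^2(\Omega)}$ and the source term via coercivity, take the supremum over normalised $\phi$, then square, integrate in time, and invoke the a priori bound \eqref{eq: priori2} together with Jensen's inequality for the time-averaged $f^{(n+1)}$. The resulting bound $2C_p^2\|f\|_{L^2(\Omega_T)}^2+2\overline\lambda^2\|\nabla_\mD\beta(u)\|_{L^2(\Omega_T)}^2$ is the same as in the paper.
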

\begin{proof}
We first fix $k\in\{0,\cdots,N-1\}$ and then, for any $\xi\in X_{\mD,0}$, take $\phi=(\phi^{(n)})_{n=0,\ldots,N}\subset X_{\mD,0}$ in~\eqref{eq: gdmscheme} such that $\phi^{(k+1)} =\xi$ and $\phi^i=0$ for all $i\not=k+1$.
Using the Cauchy--Schwarz inequality and the definition of $C_\mD$ we deduce 
\begin{align*}
\delta t^{(k+\frac12)} \big\langle \delta_\mD^{(k+\frac12)} u,\Pi_\mD \xi\big\rangle_{L^2(\Omega)}
&=
-
\big\langle\Lambda(\cdot,\Pi_\mD u)\nabla_\mD\beta(u),  \nabla_\mD \phi\big\rangle_{L^2(\Omega_T)}
+
\big\langle f,  \Pi_\mD \phi\big\rangle_{L^2(\Omega_T)}\\
&\leq \overline{\lambda}\delta t^{(k+\frac12)}\|\nabla_\mD\beta(u^{(k+1)})\|_{L^2(\Omega)}\|\nabla_\mD \xi\|_{L^2(\Omega)}\\
&\quad+\delta t^{(k+\frac12)}
\|f^{(k+1)}\|_{L^2(\Omega)}C_\mD\|\nabla_\mD \xi\|_{L^2(\Omega)},
\end{align*}
where $f^{(k+1)}(\x)=\frac{1}{\delta t^{(k+\frac12)}}\int_{t^{(k)}}^{t^{(k+1)}} f(t,\x)dt$. Taking the supremum over all $\xi\in X_{\mD,0}$ satisfying $\|\nabla_\mD \xi\|_{L^2(\Omega)}=1$, we deduce
\[
\delta t^{(k+\frac12)} |\delta_\mD^{(k+\frac12)} u|_{*,\mD}\leq\overline{\lambda} \delta t^{(k+\frac12)}\|\nabla_\mD\beta(u^{(k+1)})\|_{L^2(\Omega)}+\delta t^{(k+\frac12)}
\|f^{(k+1)}\|_{L^2(\Omega)}C_\mD.
\]
Divide by $(\delta t^{(k+\frac12)})^{1/2}$, square, and sum over $k$ (and use Jensen or Cauchy--Schwarz to estimate the term involving $f^{(k+1)}$) to get
\[
\int_0^T |\delta_\mD u(t)|_{*,\mD}^2dt\le 2\overline{\lambda}^2\|\nabla_\mD\beta(u)\|_{L^2(\Omega_T)}^2 + 2C_\mD^2 \|f\|_{L^2(\Omega_T)}^2.
\]
This together with~\eqref{eq: priori2}  complete the proof of the lemma.
\end{proof}

To deal with the lack of global Lipschitz estimates on $\beta$, we introduce cutoff functions. The definition of these functions is different in the case $0<m<1$ (fast diffusion) and $m>1$ (slow diffusion), because each of these cases correspond to a certain breakdown in the Lipschitz properties of $\beta$: for fast diffusion, $\beta$ is not Lipschitz-continuous at 0, whereas for slow diffusion, the Lipschitz constant of $\beta$ explodes at $\pm\infty$.
We therefore define $\beta_k^\s$ and $\beta_k^\f$ for any $k>0$ as
\begin{equation}\label{def:cutoff.beta}
\text{for }0<m<1,\quad
\beta_k^\f(r):=
\begin{cases}
k^{1-m}\,r \quad &\text{for } - 1/k\leq  r \le 1/k\\
\beta(r)\quad &\text{for } r<-1/k\mbox{ or }  r> 1/k,
\end{cases}
\end{equation}
and
\begin{equation}\label{def:cutoff.beta2}
\text{for } m>1,\quad
\beta_k^\s(r):=
\begin{cases}
k^m\quad &\text{for } r\ge k\\
\beta(r)\quad &\text{for } -k<r < k\\
-k^m\quad &\text{for } r\le -k.
\end{cases}
\end{equation}
We notice that, in the case $m>1$, such a regularisation of $\beta$ has already been considered in the literature, see e.g. \cite{NochettoVerdi88,Ben07,Pop02}.
The cutoff functions $\beta_k^\s$ and $\beta_k^\f$ are globally Lipschitz continuous with respective Lipschitz constants $L_k^\f = k^{1-m}$ and $L_k^\s = m\,k^{m-1}$. 
Our goal here is to estimate the time-translates of $\beta(u)$. To achieve this, we will be using the cutoff functions, as well as the following relation.

\begin{lemma}\label{lem: betak} 
Recalling that $\ms=\max(1,1/m)$, for any  $ a, b \in \R$ we have
\[
\big(\beta_k^\ii (b) -\beta_k^\ii (a)\big)^2
\leq \ms L_k^\ii (b-a )\big(\beta (b) -\beta (a)\big),\quad\text{for } \ii=\s,\f.
\]
\end{lemma}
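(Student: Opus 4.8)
The plan is to prove the inequality
\[
\bigl(\beta_k^\ii(b)-\beta_k^\ii(a)\bigr)^2\le \ms\,L_k^\ii\,(b-a)\bigl(\beta(b)-\beta(a)\bigr),\qquad \ii=\s,\f,
\]
by treating the two cutoff functions separately and exploiting two elementary features common to both: first, since $\beta_k^\ii$ is globally Lipschitz with constant $L_k^\ii$, we have $|\beta_k^\ii(b)-\beta_k^\ii(a)|\le L_k^\ii|b-a|$, so that $\bigl(\beta_k^\ii(b)-\beta_k^\ii(a)\bigr)^2\le L_k^\ii\,|b-a|\,|\beta_k^\ii(b)-\beta_k^\ii(a)|$; and second, because $\beta_k^\ii$ is non-decreasing (as both $\beta$ and the linear/constant pieces are non-decreasing, and the pieces match continuously), the product $(b-a)\bigl(\beta_k^\ii(b)-\beta_k^\ii(a)\bigr)$ is non-negative and equals $|b-a|\,|\beta_k^\ii(b)-\beta_k^\ii(a)|$. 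Combining these, it suffices to show the pointwise comparison
\[
|\beta_k^\ii(b)-\beta_k^\ii(a)|\le \ms\,\bigl|\beta(b)-\beta(a)\bigr|
\]
for all $a,b\in\R$; indeed, multiplying this by $L_k^\ii|b-a|$ and using that $(b-a)(\beta(b)-\beta(a))\ge0$ (as $\beta$ is increasing) gives the claim, since $L_k^\ii|b-a|\cdot|\beta_k^\ii(b)-\beta_k^\ii(a)| \le L_k^\ii|b-a|\cdot \ms|\beta(b)-\beta(a)| = \ms L_k^\ii (b-a)(\beta(b)-\beta(a))$.

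For the fast-diffusion case $0<m<1$, here $\ms=1/m$. I would compare the increments of $\beta_k^\f$ and $\beta$ on each of the relevant configurations of $a\le b$: both outside $[-1/k,1/k]$ on the same side, both inside, or straddling the interval. The key single-variable estimates are that on $[-1/k,1/k]$ the linear function $r\mapsto k^{1-m}r$ has slope $k^{1-m}$, while $\beta(r)=|r|^{m-1}r$ has derivative $m|r|^{m-1}\ge m k^{1-m}$ there (since $|r|\le1/k$ and $m-1<0$), so $\beta$ grows \emph{faster} than $\beta_k^\f$ inside, and outside they agree; a short case analysis — using monotonicity of $\beta$ to bound the contribution of each sub-piece — then yields $|\beta_k^\f(b)-\beta_k^\f(a)|\le \tfrac1m|\beta(b)-\beta(a)|$. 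It is cleanest to note $|\beta_k^\f(b)-\beta_k^\f(a)| = \int_a^b (\beta_k^\f)'$ (a.e.) and $(\beta_k^\f)'(r)\le \tfrac1m \beta'(r)$ for a.e.\ $r$, since where $\beta_k^\f$ is linear $(\beta_k^\f)'=k^{1-m}\le \tfrac1m\, m k^{1-m}\le\tfrac1m\beta'(r)$, and where $\beta_k^\f=\beta$ they coincide and $\tfrac1m\ge1$.

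For the slow-diffusion case $m>1$, here $\ms=1$, so I must show $|\beta_k^\s(b)-\beta_k^\s(a)|\le|\beta(b)-\beta(a)|$. This is immediate from the same derivative comparison: $\beta_k^\s$ is the truncation of $\beta$ at levels $\pm k^m=\pm\beta(\pm k)$, hence $(\beta_k^\s)'(r)=\beta'(r)\carac_{|r|<k}\le\beta'(r)$ a.e., and since $\beta$ is non-decreasing, integrating from $a$ to $b$ gives $|\beta_k^\s(b)-\beta_k^\s(a)|\le|\beta(b)-\beta(a)|$. Alternatively, and perhaps more transparently without invoking a.e.\ derivatives: $\beta_k^\s(r)=\max\bigl(-k^m,\min(k^m,\beta(r))\bigr)$ is the composition of $\beta$ with the $1$-Lipschitz truncation map $t\mapsto\max(-k^m,\min(k^m,t))$, so $|\beta_k^\s(b)-\beta_k^\s(a)|\le|\beta(b)-\beta(a)|$ directly. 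I do not anticipate a serious obstacle; the only mild care needed is the bookkeeping in the fast-diffusion case to confirm the constant $1/m$ is sharp enough (it comes precisely from the ratio of slopes $k^{1-m}$ versus $mk^{1-m}$ on the linearised interval), which the derivative-comparison formulation handles uniformly and avoids a tedious three-way case split.
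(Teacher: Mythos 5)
Your proposal is correct and follows essentially the same route as the paper: both reduce the claim to the pointwise comparison $|\beta_k^\ii(b)-\beta_k^\ii(a)|\le \ms|\beta(b)-\beta(a)|$ via the global Lipschitz bound and monotonicity, and both establish that comparison by the a.e.\ derivative inequalities $(\beta_k^\f)'\le \tfrac1m\beta'$ and $(\beta_k^\s)'\le\beta'$ integrated from $a$ to $b$. The alternative slow-diffusion argument via composition with the $1$-Lipschitz truncation map is a pleasant but inessential variant.
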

\begin{proof}
By noting that $\beta'(r) = m|r|^{m-1}$ for $r\in\R\backslash\{0\}$ and
\[
\big(\beta_k^\f\big)'(r):=
\begin{cases}
k^{1-m} \quad &\text{for } - 1/k<  r < 1/k\\
\beta'(r)\quad &\text{for } r<-1/k\mbox{ or }  r> 1/k,
\end{cases}
\]
\[
\big(\beta_k^\s\big)'(r):=
\begin{cases}
0 \quad &\text{for }  r<-k\mbox{ or } r> k,\\
\beta'(r)\quad &\text{for } -k<r < k,
\end{cases}
\]
we obtain 
\begin{align*}
0\leq \big(\beta_k^\f\big)'(r)&\leq \frac{1}{m}\beta'(r)\qquad\forall r\in\R\backslash\{-1/k,0,1/k\}\\
0\leq \big(\beta_k^\s\big)'(r)&\leq \beta'(r)\qquad\forall r\in\R\backslash\{-k,0,k\}.
\end{align*}
The above inequalities imply that for any $ a, b \in \R$,
\begin{equation*}
\left|\int_a^b \big(\beta_k^\ii\big)' (s) ds\right|\leq \ms\left|\int_a^b \beta' (s) ds\right|,\quad
\text{for }\ii=\s,\f ,
\end{equation*}
or, equivalently,
\begin{equation}
\big|\beta_k^\ii (b) -\beta_k^\ii (a)\big|
\leq\ms
\big|\beta (b)- \beta (a)\big|,\quad
\text{for } \ii=\s,\f.
\end{equation}
Together with the global Lipschitz property of $\beta_k^\ii$ and the monotonicity of $\beta$, this yields
\begin{align*}
\big(\beta_k^\ii (b) -\beta_k^\ii (a)\big)^2
&\leq 
 L_k^\ii |b-a | \big|\beta_k^\ii (b) -\beta_k^\ii (a)\big|\\
&\leq
 L_k^\ii |b-a | \ms\big|\beta (b)- \beta (a)\big|
 =\ms
 L_k^\ii (b-a ) \big(\beta (b)- \beta (a)\big),
\end{align*}
which completes the proof.
\end{proof}

We can now estimate the time translates of $\beta(u)$. 

\begin{lemma}\label{lem: timetrans}
Let $\mD$ be a gradient discretisation in the sense of Definition~\ref{def: gdm} 
and $u$ be  a solution to scheme~\eqref{eq: gdmscheme}. Then,
there exists a constant $C$  depending only on $m$, $\overline{\lambda}$, $\underline{\lambda}$, $f$, $C_p\ge C_{\mD}$ and $C_{\mathrm{ini}}\ge\|\Pi_\mD \mI_\mD u_0\|_{L^{m+1}(\Omega)}$ such that:
\begin{itemize}
\item for $m\in(0,1)$,
\begin{equation}\label{eq: time1}
\big\|\Pi_\mD\beta(u)(\cdot+\tau,\cdot)-\Pi_\mD\beta(u)\big\|^2_{L^2(\Omega_{T-\tau})}
\leq C(\delta t_\mD + \tau)^{2m/(m+1)},
\end{equation}
\item for $m>1$,
\begin{equation}\label{eq: time2}
\big\|\Pi_\mD\beta(u)(\cdot+\tau,\cdot)-\Pi_\mD\beta(u)\big\|^2_{L^1(\Omega_{T-\tau})}
\leq 
C(\delta t_\mD + \tau)^{2/(m+1)}.
\end{equation}
\end{itemize}
\end{lemma}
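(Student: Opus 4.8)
The plan is to estimate the time-translates of $\beta(u)$ by first passing through the cutoff functions $\beta_k^\ii$ (which are globally Lipschitz), controlling their time-translates using the dual-norm estimate of Lemma~\ref{lem: dualnorm} together with Lemma~\ref{lem: betak}, and then controlling the error between $\beta(u)$ and $\beta_k^\ii(u)$ by the \emph{a priori} estimate~\eqref{eq: priori2}; finally we optimise over the cutoff parameter $k$ to obtain the stated rate in $(\delta t_\mD+\tau)$. First, I would reduce the time-translate to a telescopic sum: for $t\in(t^{(n)},t^{(n+1)}]$, the difference $\Pi_\mD\beta(u)(t+\tau,\cdot)-\Pi_\mD\beta(u)(t,\cdot)$ equals $\beta(\Pi_\mD u^{(m+1)})-\beta(\Pi_\mD u^{(n+1)})$ for appropriate indices, and the number of intervening time steps is at most of order $(\delta t_\mD+\tau)/\delta t^{(\cdot)}$. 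Following the standard discrete time-translate argument (as in \cite[Lemma 4.9 or similar]{Droniou2016} and \cite{Droniou.et.al2018}), integrating in space and summing in $t$, the key intermediate quantity is
\[
A_k:=\int_0^{T-\tau}\!\!\int_\Omega \bigl(\beta_k^\ii(\Pi_\mD u)(t+\tau,\x)-\beta_k^\ii(\Pi_\mD u)(t,\x)\bigr)\bigl(\Pi_\mD u(t+\tau,\x)-\Pi_\mD u(t,\x)\bigr)\d\x\,\d t,
\]
which, writing the increment of $\Pi_\mD u$ as a sum of $\delta_\mD^{(j+\frac12)}u\,\delta t^{(j+\frac12)}$ and using the scheme~\eqref{eq: gdmscheme} with test function built from $\beta_k^\ii(u^{(\cdot)})$, is bounded by $C L_k^\ii(\delta t_\mD+\tau)$ via Cauchy--Schwarz, the dual norm, Lemma~\ref{lem: betak} (to pass from $\beta_k^\ii$-increments back to $\beta$-increments and hence to $\nabla_\mD\beta(u)$), and the bounds of Lemmas~\ref{lem: priori} and~\ref{lem: dualnorm}.

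Then, by Lemma~\ref{lem: betak} again, $\|\Pi_\mD\beta_k^\ii(u)(\cdot+\tau)-\Pi_\mD\beta_k^\ii(u)\|_{L^2(\Omega_{T-\tau})}^2\le \ms L_k^\ii A_k\le C L_k^\ii(L_k^\ii)(\delta t_\mD+\tau)$, so this term scales like $(L_k^\ii)^2(\delta t_\mD+\tau)$. For the truncation error, I use that $|\beta(r)-\beta_k^\f(r)|$ is supported in $|r|\le 1/k$ and there bounded by $C|r|^m\le Ck^{-m}$ (fast diffusion), while $|\beta(r)-\beta_k^\s(r)|$ is supported in $|r|\ge k$ where it is controlled by $|\beta(r)|\carac_{|r|\ge k}\le |r|^m$; combining with $\|\Pi_\mD u\|_{L^\infty(0,T;L^{m+1}(\Omega))}\le C$ from~\eqref{eq: priori2}, the $L^2$ (resp. $L^1$) norm of $\beta(u)-\beta_k^\f(u)$ is $O(k^{-m})$ and that of $\beta(u)-\beta_k^\s(u)$ is $O(k^{-1})$ (the $L^1$ bound in the slow case because $\int_{|r|\ge k}|r|^m\le k^{m-(m+1)}\int|r|^{m+1}=k^{-1}\|u\|_{m+1}^{m+1}$). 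Thus, in the fast case, the triangle inequality gives
\[
\|\Pi_\mD\beta(u)(\cdot+\tau)-\Pi_\mD\beta(u)\|_{L^2(\Omega_{T-\tau})}^2\le C\bigl((L_k^\f)^2(\delta t_\mD+\tau)+k^{-2m}\bigr)=C\bigl(k^{2(1-m)}(\delta t_\mD+\tau)+k^{-2m}\bigr),
\]
and optimising (balancing the two terms: $k^{2}(\delta t_\mD+\tau)\sim 1$, i.e. $k\sim(\delta t_\mD+\tau)^{-1/2}$) yields the exponent $2m/(m+1)$ after a short computation; the slow case is analogous in $L^1$, with $L_k^\s=mk^{m-1}$ and truncation error $O(k^{-1})$, balancing $k^{2(m-1)}(\delta t_\mD+\tau)\sim k^{-2}$, i.e. $k\sim(\delta t_\mD+\tau)^{-1/(2m)}$, giving exponent $2/(m+1)$.

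The main obstacle is the bookkeeping in bounding $A_k$: one must carefully handle the discrete sum-by-parts when expressing $\Pi_\mD u(t+\tau)-\Pi_\mD u(t)$ as a sum over intermediate time-steps and inserting the scheme, ensuring the constants remain independent of $\delta t_\mD$, $\tau$ and the discretisation $\mD$ (only on $m,\underline\lambda,\overline\lambda,f,C_p,C_{\mathrm{ini}}$), and in particular making the diffusion term collapse correctly onto $\|\nabla_\mD\beta(u)\|_{L^2(\Omega_T)}^2$ rather than producing an uncontrolled $\|\nabla_\mD\beta_k^\ii(u)\|$ — this is exactly where Lemma~\ref{lem: betak} is essential. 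A secondary technical point is that $\beta_k^\ii$ does not satisfy $g(0)=0$ only when... in fact $\beta_k^\f(0)=0$ and $\beta_k^\s(0)=0$, so the reconstruction-commutation~\eqref{PiD:commute} applies and $\beta_k^\ii(u)\in X_{\mD,0}$ is a legitimate test function; this should be noted but causes no difficulty. Once $A_k$ is controlled, the optimisation over $k$ is elementary.
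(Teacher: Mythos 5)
Your overall strategy is the paper's: split $\Pi_\mD\beta(u)(\cdot+\tau)-\Pi_\mD\beta(u)$ into the time-translate of the cutoff $\beta_k^\ii(u)$ plus two truncation errors, bound the former through the dual norm of $\delta_\mD u$ and Lemma~\ref{lem: betak}, bound the latter through \eqref{eq: priori2}, and optimise over $k$. Your truncation estimates ($O(k^{-m})$ in $L^2$ for the fast case, $O(k^{-1})$ in $L^1$ for the slow case) match the paper's, and your observation that $\beta_k^\ii(0)=0$, so that \eqref{PiD:commute} applies and $\beta_k^\ii(u^{(\cdot)})$ is a legitimate element of $X_{\mD,0}$, is correct.

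There is, however, a genuine quantitative error in your bound for the cutoff translate, and it breaks the stated exponents. You claim $A_k\le C L_k^\ii(\delta t_\mD+\tau)$ and hence $\|\Pi_\mD\beta_k^\ii(u)(\cdot+\tau,\cdot)-\Pi_\mD\beta_k^\ii(u)\|_{L^2(\Omega_{T-\tau})}^2\le C(L_k^\ii)^2(\delta t_\mD+\tau)$. The factor $L_k^\ii$ in the bound on $A_k$ is spurious: since $|\beta_k^\ii(b)-\beta_k^\ii(a)|\le\ms\,|\beta(b)-\beta(a)|$ (the intermediate inequality inside the proof of Lemma~\ref{lem: betak}), one has $A_k\le\ms A$, where $A$ is the same integral with $\beta$-increments in place of $\beta_k^\ii$-increments; $A$ is then bounded by $C(\delta t_\mD+\tau)$ via the jump decomposition, the dual norm and Lemma~\ref{lem: dualnorm}, with no $L_k^\ii$ appearing. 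This is also the only way to close the argument: the dual-norm pairing must be tested against $\beta(u^{(\cdot)})$, whose discrete gradient is controlled by \eqref{eq: priori2}, not against $\beta_k^\ii(u^{(\cdot)})$, whose discrete gradient is not controlled. The single factor $L_k^\ii$ enters exactly once, when passing from $A$ to the square of the translate via Lemma~\ref{lem: betak}. With your extra factor, the optimisation you describe ($k\sim(\delta t_\mD+\tau)^{-1/2}$ in the fast case, $k\sim(\delta t_\mD+\tau)^{-1/(2m)}$ in the slow case) actually yields exponents $m$ and $1/m$ respectively, not the claimed $2m/(m+1)$ and $2/(m+1)$ --- your stated conclusion and your arithmetic are inconsistent, and the resulting bounds are strictly weaker than \eqref{eq: time1}--\eqref{eq: time2}. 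The correct balance, with $L_k^\ii(\delta t_\mD+\tau)$ against $k^{-2m}$ (resp.\ $k^{-2}$), gives $k=(\delta t_\mD+\tau)^{-1/(m+1)}$ in both cases and recovers the exponents of the lemma.
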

\begin{proof}
In this proof, $C$ denotes a generic constant that has the same dependencies as in the lemma.
For $\ii=\s,\f$ and any $k\in\N$ we have
\begin{align}\label{eq: timetrans1}
\Pi_\mD\beta(u)&(\cdot+\tau,\cdot)-\Pi_\mD\beta(u)
=
\Gamma_{1,k}^\ii + \Gamma_{2,k}^\ii
+
\Gamma_{3,k}^\ii
\end{align}
with 
\begin{align*}
\Gamma_{1,k}^\ii
:= \Pi_\mD\big(\beta_k^\ii(u)-\beta(u)\big),\quad
\Gamma_{2,k}^\ii
:=\Pi_\mD\big(\beta(u)-\beta_k^i(u)\big)(\cdot+\tau,\cdot),
\end{align*}
and
\[
\Gamma_{3,k}^\ii:=
\Pi_\mD\beta_k^\ii(u)(\cdot+\tau,\cdot)-\Pi_\mD\beta_k^\ii(u).
\]
To estimate $\Gamma_{1,k}^\ii$, define
\begin{align*}
\Omega_k^\f:={}&\{(t,\x)\in \Omega_{T-\tau}\,:\, |\Pi_\mD u(t,\x)|<1/k\},\\
\Omega_k^\s:={}&\{(t,\x)\in \Omega_{T-\tau}\,:\, |\Pi_\mD u(t,\x)|> k\}.
\end{align*}
The commutativity property \eqref{PiD:commute} shows that $\big|\Pi_\mD\big(|u|^{m-1}u\big)\big|=
\Pi_\mD|u|^{2m}=|\Pi_\mD u|^{2m}$ and thus, when $\ii=\f$,
\begin{align}\label{eq: timetrans21}
\big\|\Gamma_{1,k}^\f \big\|^2_{L^2(\Omega_{T-\tau})}&= \int_{\Omega_{T-\tau}} \mId_{\Omega_k^\f}\big|\Pi_\mD\big(|u|^{m-1}u-\beta_k^\f(u)\big)\big|^2(t,\x) d\x dt\nonumber\\
&= \int_{\Omega_k^\f} \big|\Pi_\mD\big(|u|^{m-1}u-k^{1-m}u\big)\big|^2(t,\x) d\x dt\nonumber\\
&\leq
2\int_{\Omega_k^\f} |\Pi_\mD u|^{2m}(t,\x) + k^{2-2m}|\Pi_\mD u|^2(t,\x) d\x dt\nonumber\\
&\leq
4k^{-2m} \big|\Omega_k^\f\big|\leq  4 \big|\Omega_T\big|k^{-2m}.
\end{align}
By the H\"older and Chebyshev inequalities,~\eqref{PiD:commute} and~\eqref{eq: priori2}, we estimate $\Gamma_{1,k}^\s$:
\begin{align}\label{eq: timetrans3}
\big\|\Gamma_{1,k}^\s \big\|_{L^1(\Omega_{T-\tau})}
&= \int_{\Omega_{T-\tau}} \mId_{\Omega_k^\s}\big|\Pi_\mD\big(|u|^{m-1}u-\beta_k^\s(u)\big)\big|(t,\x) d\x dt\nonumber\\
&\leq
\int_{\Omega_{T-\tau}} \mId_{\Omega_k^\s} \Pi_\mD\big(|u|^{m} + k^{m}\big)(t,\x) d\x dt\nonumber\\
&\leq
2\int_{\Omega_{T-\tau}} \mId_{\Omega_k^\s} |\Pi_\mD u|^{m} (t,\x) d\x dt\nonumber\\
&\leq
2 \|\Pi_\mD u\|_{L^{m+1}(\Omega_{T-\tau})}^{m}\big|\Omega_k^\s\big|^{1/(m+1)}\nonumber\\
&\leq
2k^{-1} \|\Pi_\mD u\|_{L^{m+1}(\Omega_{T-\tau})}^{m+1}\leq Ck^{-1}.
\end{align}

Similarly, we also have estimates for $\Gamma_{2,k}^\ii$:
\begin{equation}\label{eq: timetrans4}
\big\|\Gamma_{2,k}^\f \big\|^2_{L^2(\Omega_{T-\tau})}
\leq  Ck^{-2m},\quad
\big\|\Gamma_{2,k}^\s \big\|_{L^1(\Omega_{T-\tau})}
\leq Ck^{-1}.
\end{equation}

We estimate $\Gamma_{3,k}^\ii$ using the same arguments in~\cite[Lemma 4.4]{Droniou2016}. Thanks to Lemma~\ref{lem: betak} we have
\begin{align}\label{eq: timetrans5}
\big\|\Gamma_{3,k}^\ii \big\|^2_{L^2(\Omega_{T-\tau})}
\leq
 \ms L_k^\ii \int_0^{T-\tau} A(t,\tau)dt,
\end{align}   
with
\[
A(t,\tau):=\int_\Omega
 \big(\Pi_\mD u(t+\tau,\x)- \Pi_\mD u(t,\x)\big)\big(\Pi_\mD \beta(u)(t+\tau,\x)- \Pi_\mD \beta(u)(t,\x)\big)d\x. 
\]
For any $t\in (0,T)$ there exists $n_t\in \{0,\cdots,N-1\}$ such that $t^{(n_t)}<t\leq t^{(n_t + 1)}$. We rewrite $A$ by expressing $\Pi_\mD u(t+\tau,\x)- \Pi_\mD u(t,\x)$ as the sum of its jumps in time, and use the definition of dual semi-norm $|\cdot|_{*,\mD}$ to infer
\begin{align*}
A(t,\tau)
&=\sum_{j=n_t+1}^{n_{t+\tau}}\delta t^{(j+\frac12)}\int_\Omega
   \delta_\mD^{(j+\frac12)}u(\x)\,\Pi_\mD \big(\beta(u^{(n_{t+\tau} +1)})- \beta(u^{(n_{t} +1)})\big)(\x)d\x\\
  &\leq 
    \|\nabla_\mD \big(\beta(u^{(n_{t+\tau} +1)})- \beta(u^{(n_{t} +1)})\big)\|_{L^2(\Omega)} \sum_{j=n_t+1}^{n_{t+\tau}}\delta t^{(j+\frac12)}
|\delta_\mD^{(j+\frac12)}u|_{*,\mD}\\
&=
\|\nabla_\mD \beta (u)(t+\tau)- \nabla_\mD\beta (u)(t)\|_{L^2(\Omega)} \sum_{j=n_t+1}^{n_{t+\tau}}\delta t^{(j+\frac12)}
|\delta_\mD^{(j+\frac12)}u|_{*,\mD}.
\end{align*}
Together with the Cauchy--Schwarz inequality (on the sums and the integrals) and~\eqref{eq: priori2}, this implies
\begin{align}\label{eq: timetrans6}
\int_0^{T-\tau} &A(t,\tau)dt
\leq 
\bigg[\int_0^{T-\tau}\|\nabla_\mD \beta (u)(t+\tau)- \nabla_\mD\beta (u)(t)\|_{L^2(\Omega)}^2 dt \bigg]^{1/2}\nonumber\\
&\qquad\qquad\qquad \times\bigg[\int_0^{T-\tau}\big(\sum_{j=n_t+1}^{n_{t+\tau}}\delta t^{(j+\frac12)}
|\delta_\mD^{(j+\frac12)}u|_{*,\mD}\big)^2dt \bigg]^{1/2}\nonumber\\
&\leq 
C\bigg[\int_0^{T-\tau}\big(\sum_{j=n_t+1}^{n_{t+\tau}}\delta t^{(j+\frac12)}\big)
\big(\sum_{j=n_t+1}^{n_{t+\tau}}\delta t^{(j+\frac12)}
|\delta_\mD^{(j+\frac12)}u|_{*,\mD}^2\big)dt \bigg]^{1/2}.
\end{align}
For any $t\in (0,T-\tau)$, we note that
\begin{align*}
\sum_{j=n_t+1}^{n_{t+\tau}}\delta t^{(j+\frac12)} 
&= 
t^{(n_{t+\tau}+1)} - t^{(n_{t}+1)}\nonumber\\
&=
\big[t^{(n_{t+\tau}+1)} - (t+\tau)\big] + \big(t-t^{(n_{t}+1)}\big) + \tau
\leq 
2\delta t_\mD +\tau. 
\end{align*}
Hence, \eqref{eq: timetrans6} and Lemma \ref{lem: dualnorm} yield
\begin{align*}
\int_0^{T-\tau} A(t,\tau)dt
&\leq 
C(2\delta t_\mD + \tau)^{1/2}\bigg[\int_0^{T-\tau}\int_{t^{(n_t+1)}}^{t^{(n_{t+\tau})}}|\delta_\mD u(s)|_{*,\mD}^2 ds\,dt \bigg]^{1/2}\\
&\leq 
C(2\delta t_\mD + \tau)^{1/2}\bigg[\int_0^{T}|\delta_\mD u(s)|_{*,\mD}^2\int_{s-\tau-\delta t_\mD}^s dt\,ds \bigg]^{1/2}\\
&\leq 
C(\delta t_\mD + \tau).
\end{align*}
Together with~\eqref{eq: timetrans5}, this implies
\begin{align}\label{eq: timetrans7}
\big\|\Gamma_{3,k}^\ii \big\|^2_{L^2(\Omega_{T-\tau})}
\leq
 C L_k^\ii (\delta t_\mD + \tau),\quad\text{for } \ii=\s,\f.
\end{align} 
From~\eqref{eq: timetrans1}--\eqref{eq: timetrans4} and~\eqref{eq: timetrans7}, we deduce that 
\begin{align*}
\mbox{for $m<1$:}&\quad\big\|\Pi_\mD\beta(u)(\cdot+\tau,\cdot)-\Pi_\mD\beta(u)\big\|^2_{L^2(\Omega_{T-\tau})}
\leq Ck^{-2m} + Ck^{1-m}(\delta t_\mD + \tau),\\
\mbox{for $m>1$:}&\quad\big\|\Pi_\mD\beta(u)(\cdot+\tau,\cdot)-\Pi_\mD\beta(u)\big\|^2_{L^1(\Omega_{T-\tau})}
\leq 
Ck^{-2} + Cmk^{m-1}(\delta t_\mD + \tau).
\end{align*}
The results~\eqref{eq: time1} and~\eqref{eq: time2} follow from the above inequalities by choosing 
$k = (\delta t_\mD + \tau )^{-1/(m+1)}$.
\end{proof}
\section{Initial convergence of gradient schemes}\label{sec:init_conv}

Let us start with a strong convergence result on $\Pi_{\mD_l}\beta(u_l)$.

\begin{lemma}\label{lem: strongconv}
Let $r=2$ if $m\in (0,1)$ and $r=1$ if $m>1$.
There exists a subsequence of $\big(\Pi_{\mD_l}\beta(u_l)\big)_{l\ge 1}$ (still denoted by $\big(\Pi_{\mD_l}\beta(u_l)\big)_{l\ge 1}$) and $\bar{\beta} \in L^r(\Omega_T)$ such that, as $l\to\infty$,
\[
\Pi_{\mD_l}\beta(u_l) \rightarrow \bar{\beta}\quad\text{strongly in } L^r(\Omega_T).
\]
\end{lemma}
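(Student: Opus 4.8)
The plan is to combine the uniform a priori bounds from Lemma~\ref{lem: priori} with the time-translate estimates from Lemma~\ref{lem: timetrans} and a discrete space-translate estimate coming from the compactness of the sequence $(\mD_l)_{l\ge 1}$, and then invoke a Kolmogorov--Riesz--Fréchet compactness argument in $L^r(\Omega_T)$. First I would record the space bound: from \eqref{eq: priori2} the sequence $\|\nabla_{\mD_l}\beta(u_l)\|_{L^2(\Omega_T)}$ is uniformly bounded, so $\sup_{l\ge 1}T_{\mD_l}(\bxi)$ controls the space-translates of $\Pi_{\mD_l}\beta(u_l)$ (extended by $0$ outside $\Omega$); by the compactness of the sequence of GDs, $\|\Pi_{\mD_l}\beta(u_l)(\cdot+\bxi)-\Pi_{\mD_l}\beta(u_l)\|_{L^2(\R^d)}\le T_{\mD_l}(\bxi)\|\nabla_{\mD_l}\beta(u_l)(t)\|_{L^2(\Omega)}$ pointwise in $t$, and integrating in time and using Cauchy--Schwarz gives a bound $\le (T_{\mD_l}(\bxi))\,\|\nabla_{\mD_l}\beta(u_l)\|_{L^2(\Omega_T)}$ on the $L^2(\R^d\times(0,T))$ norm of the space-translate, which tends to $0$ uniformly in $l$ as $\bxi\to0$. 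On a bounded domain this $L^2$-in-space control also gives $L^1$-in-space control, so the relevant statement holds for both $r=1$ and $r=2$.

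Next I would handle the time-translates. For $m\in(0,1)$, \eqref{eq: time1} directly gives $\|\Pi_{\mD_l}\beta(u_l)(\cdot+\tau,\cdot)-\Pi_{\mD_l}\beta(u_l)\|_{L^2(\Omega_{T-\tau})}^2\le C(\delta t_{\mD_l}+\tau)^{2m/(m+1)}$, which tends to $0$ uniformly in $l$ as $\tau\to0$ since $\delta t_{\mD_l}\to0$ by consistency. For $m>1$, \eqref{eq: time2} gives the analogous smallness in $L^1(\Omega_{T-\tau})$. Combined with the space-translate estimate, these two pieces say that the family $(\Pi_{\mD_l}\beta(u_l))_{l\ge1}$, viewed as functions on $\R^{d+1}$ extended by $0$ outside $\Omega_T$, has uniformly small translates in $L^r(\R^{d+1})$; together with the uniform $L^r(\Omega_T)$ bound (which follows from \eqref{eq: priori2}, since $|\Pi_{\mD_l}\beta(u_l)|=|\Pi_{\mD_l}u_l|^m$ and $\|\Pi_{\mD_l}u_l\|_{L^{m+1}(\Omega_T)}$ is bounded, so $\Pi_{\mD_l}\beta(u_l)$ is bounded in $L^{(m+1)/m}(\Omega_T)\hookrightarrow L^r(\Omega_T)$ for $r=1,2$ as appropriate) and the equiboundedness of the supports, the Kolmogorov--Riesz--Fréchet theorem yields a subsequence converging strongly in $L^r(\Omega_T)$ to some limit $\bar\beta\in L^r(\Omega_T)$. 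One should note that handling translates in $\Omega_T$ rather than $\R^{d+1}$ requires the standard device of restricting to a slightly smaller time interval $(0,T-\tau)$, which is exactly the form in which \eqref{eq: time1}--\eqref{eq: time2} are stated, so no extra work is needed there.

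The main obstacle, as I see it, is the mismatch of integrability exponents in the slow diffusion case $m>1$: the time-translate estimate \eqref{eq: time2} lives in $L^1$ while the coercivity/compactness machinery for the space-translates naturally produces $L^2$-type control. The clean way around this is simply to run the compactness argument in $L^1(\Omega_T)$ throughout when $m>1$ (downgrading the $L^2$ space-translate bound to an $L^1$ one using finiteness of $|\Omega_T|$), which is why the lemma is stated with $r=1$ in that regime; the uniform $L^{(m+1)/m}(\Omega_T)$ bound then guarantees that the $L^1$ limit $\bar\beta$ is no worse than $L^1$, as claimed. A secondary, more bookkeeping-level point is to make sure the single extracted subsequence works simultaneously for all the translate estimates; this is automatic since all bounds are uniform in $l$ and the Kolmogorov--Riesz--Fréchet criterion is applied once. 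I would also remark that this lemma is the place where the compactness assumption on $(\mD_l)$ is genuinely used (the space-translates), whereas consistency enters only through $\delta t_{\mD_l}\to0$.
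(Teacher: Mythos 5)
Your proposal is correct and follows essentially the same route as the paper: the proof there is exactly the combination of the a priori bound \eqref{eq: priori2} and the compactness of $(\mD_l)_{l\ge1}$ for the space translates, Lemma~\ref{lem: timetrans} for the time translates, and Kolmogorov's theorem, with the $r=1$ versus $r=2$ distinction forced by the exponent in the time-translate estimate exactly as you explain. The only cosmetic remark is that the uniform $L^2(\Omega_T)$ bound on $\Pi_{\mD_l}\beta(u_l)$ can be read off directly from the coercivity estimate \eqref{eq: coer} rather than via the $L^{(m+1)/m}$ embedding, but both arguments are valid.
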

\begin{proof}
The result is obtained using~\eqref{eq: priori2} and the compactness of $(\mD_l)_{l\ge 1}$ (to estimate the space translates of $\Pi_{\mD_l}\beta(u_l)$), Lemma~\ref{lem: timetrans} (to estimate the time translates of $\Pi_{\mD_l}\beta(u_l)$), Kolmogorov's theorem and the same arguments as in~\cite[p.748]{Droniou2016}
\end{proof}
\begin{lemma}\label{lem: uniform weak conv}
Let $T>0$ and take a sequence $(\mD_l)_{l\ge 1}$ of space-time gradient discretisations, in the sense of Definition~\ref{def: gdm}, that is consistent. Let $u_l$ be a solution to \eqref{eq: gdmscheme} with $\mD=\mD_l$.
Then, the sequence $(\Pi_{\mD_l}u_l)_{l\ge 1}$ is relatively compact uniformly-in-time and weakly in $L^{m+1}(\Omega)$, i.e. 
there exists a subsequence of $(\Pi_{\mD_l}u_l)_{l\ge 1}$ (still denoted by $(\Pi_{\mD_l}u_l)_{l\ge 1}$) and a function $\bar{u}:[0,T]\rightarrow L^{m+1}(\Omega)$ such that, 
for all $\phi\in L^{1+1/m}(\Omega)$, the sequence of functions 
\[
t\in [0,T]\mapsto \, _{L^{m+1}}\langle \Pi_{\mD_l}u_l(t),\phi \rangle_{L^{1+1/m}}
\]
 converges uniformly on $[0,T]$ to the function 
\[
t\in [0,T]\mapsto \, _{L^{m+1}}\langle \bar{u}(t),\phi \rangle_{L^{1+1/m}}.
\]
Moreover, $\bar{u}$ is continuous $[0,T]\mapsto L^{m+1}(\Omega)$ for the weak topology of $L^{m+1}(\Omega)$.
\end{lemma}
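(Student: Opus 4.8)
The plan is to regard $t\mapsto\Pi_{\mD_l}u_l(t)$ as a sequence of functions with values in a fixed compact metric space, and to apply a \emph{discontinuous} version of the Arzel\`a--Ascoli theorem. By the a priori bound~\eqref{eq: priori2} there is $M>0$ with $\sup_l\sup_{t\in[0,T]}\|\Pi_{\mD_l}u_l(t)\|_{L^{m+1}(\O)}\le M$, so every value $\Pi_{\mD_l}u_l(t)$ lies in the closed ball $K:=\{v\in L^{m+1}(\O)\,:\,\|v\|_{L^{m+1}(\O)}\le M\}$. Since $L^{1+1/m}(\O)=(L^{m+1}(\O))'$ is separable, the weak topology of $L^{m+1}(\O)$ is metrisable on $K$: fixing a countable family $(\phi_k)_{k\ge1}\subset C_c^\infty(\O)$ that is total in $L^{1+1/m}(\O)$, the formula $d_K(v,w):=\sum_{k\ge1}2^{-k}\min\bigl(1,|\langle v-w,\phi_k\rangle|\bigr)$ (with $\langle\cdot,\cdot\rangle$ the $L^{m+1}$--$L^{1+1/m}$ duality) defines a metric for which $(K,d_K)$ is compact and whose topology is the weak topology of $L^{m+1}(\O)$. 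It therefore suffices to extract a subsequence and find $\bar u\in C([0,T];(K,d_K))$ with $\sup_{t\in[0,T]}d_K(\Pi_{\mD_l}u_l(t),\bar u(t))\to0$, since continuity of $\bar u$ for $d_K$ is precisely weak continuity $[0,T]\to L^{m+1}(\O)$, and uniform convergence of $t\mapsto\langle\Pi_{\mD_l}u_l(t),\phi\rangle$ for a \emph{general} $\phi\in L^{1+1/m}(\O)$ then follows by approximating $\phi$ in $L^{1+1/m}(\O)$ by the $\phi_k$ and using the uniform bound $M$.

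The core estimate is an eventual uniform modulus of continuity in time. Fix $\phi\in L^{1+\ms}(\O)\cap H^1_0(\O)$ (recall $\ms=\max(1,1/m)$, so $1+\ms\ge1+1/m$ and, $\O$ being bounded, $L^{1+\ms}(\O)\hookrightarrow L^{1+1/m}(\O)$). Using consistency, choose $w_l\in X_{\mD_l,0}$ achieving $\hat S_{\mD_l}(\phi)$; then $\|\nabla_{\mD_l}w_l\|_{L^2(\O)}\le C_\phi$ for all $l$ and $\eta_l(\phi):=\|\Pi_{\mD_l}w_l-\phi\|_{L^{1+1/m}(\O)}\to0$. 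For $0\le s\le t\le T$, split
\[
\langle\Pi_{\mD_l}u_l(t)-\Pi_{\mD_l}u_l(s),\phi\rangle=\langle\Pi_{\mD_l}u_l(t)-\Pi_{\mD_l}u_l(s),\phi-\Pi_{\mD_l}w_l\rangle+\langle\Pi_{\mD_l}u_l(t)-\Pi_{\mD_l}u_l(s),\Pi_{\mD_l}w_l\rangle.
\]
The first term is $\le 2M\,\eta_l(\phi)$ by H\"older's inequality. For the second, write $\Pi_{\mD_l}u_l(t)-\Pi_{\mD_l}u_l(s)=\sum_j\delta t^{(j+\frac12)}\delta_{\mD_l}^{(j+\frac12)}u_l$ as the telescoping sum of the time jumps of $\Pi_{\mD_l}u_l$ over the indices $j$ between $s$ and $t$; the dual norm~\eqref{def:dualnorm} bounds it by $\|\nabla_{\mD_l}w_l\|_{L^2(\O)}\sum_j\delta t^{(j+\frac12)}|\delta_{\mD_l}^{(j+\frac12)}u_l|_{*,\mD_l}$, and Cauchy--Schwarz on this sum, together with $\sum_j\delta t^{(j+\frac12)}\le|t-s|+\delta t_{\mD_l}$ and Lemma~\ref{lem: dualnorm}, controls it by $C_\phi\,C^{1/2}(|t-s|+\delta t_{\mD_l})^{1/2}$. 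Hence, for all $0\le s\le t\le T$,
\[
\bigl|\langle\Pi_{\mD_l}u_l(t)-\Pi_{\mD_l}u_l(s),\phi\rangle\bigr|\le 2M\,\eta_l(\phi)+C_\phi\,C^{1/2}\bigl(|t-s|+\delta t_{\mD_l}\bigr)^{1/2},
\]
with $C$ the constant of Lemma~\ref{lem: dualnorm} (the constants in Lemmas~\ref{lem: priori} and~\ref{lem: dualnorm} being independent of $l$).

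Finally, inserting this bound (applied to each $\phi_k$) into the series defining $d_K$ and using $\delta t_{\mD_l}\to0$ shows that for every $\varepsilon>0$ there exist $\rho>0$ and $L\in\N$ such that $d_K(\Pi_{\mD_l}u_l(s),\Pi_{\mD_l}u_l(t))\le\varepsilon$ whenever $l\ge L$ and $|s-t|\le\rho$ (first truncate the series so its tail is below $\varepsilon/3$, then take $l$ large enough that $\delta t_{\mD_l}$ and the finitely many $\eta_l(\phi_k)$ are small, then $\rho$ small). As the functions $\Pi_{\mD_l}u_l$ take values in the compact metric space $(K,d_K)$ and satisfy this eventual uniform equicontinuity, a discontinuous Arzel\`a--Ascoli-type theorem (in the spirit of \cite{Droniou2016}; see also \cite{Droniou.et.al2018} and Section~\ref{sec:appen}) provides a subsequence and $\bar u\in C([0,T];(K,d_K))$ with $\sup_{t\in[0,T]}d_K(\Pi_{\mD_l}u_l(t),\bar u(t))\to0$, which by the first paragraph is exactly the assertion. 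The main obstacle is that $\Pi_{\mD_l}u_l$ is only piecewise constant, hence genuinely discontinuous, in time, so the classical Arzel\`a--Ascoli theorem is unavailable; one must work with its "discontinuous" variant and keep track of the discretisation term $\delta t_{\mD_l}\to0$ sitting inside the time modulus, as well as of the interplay between the consistency error $\eta_l(\phi)\to0$ and the test-function-dependent constants $C_\phi$ when passing from a dense family of test functions to arbitrary ones.
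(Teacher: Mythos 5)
Your proposal is correct and follows essentially the same route as the paper: the same metrisation of the weak $L^{m+1}$ topology on a bounded ball via a countable family of smooth test functions, the same splitting through the consistency interpolator combined with the telescoping-jump/dual-norm estimate of Lemma~\ref{lem: dualnorm}, and the same conclusion via the discontinuous Ascoli--Arzel\`a theorem. The only (welcome) additions are your explicit treatment of the passage from the dense family $(\phi_k)$ to arbitrary $\phi\in L^{1+1/m}(\Omega)$, which the paper leaves implicit.
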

\begin{proof}
The result is a consequence of the discontinuous Ascoli--Arzela theorem~\cite[Theorem 6.2]{Droniou2016} (see also \cite[Theorem C.11, p455]{Droniou.et.al2018}). Let us check the assumptions of this theorem.

Let $(\phi_i)_{i\in\N}\subset C_c^\infty(\Omega)$ be a dense sequence in $L^{1+1/m}(\Omega)$ and equip the ball $B$ of radius $C$ (from \eqref{eq: priori2}) in $L^{m+1}(\Omega)$ with the following metric
\[
d_B(v,w) = \sum_{i\in\N} \frac{\text{min}(1,| _{L^{m+1}}\langle v-w,\phi_i\rangle_{L^{1+1/m}}|)}{2^i}\quad \text{for }v,w\in B.
\]
The metric $d_B$ defines the weak topology of $L^{m+1}(\Omega)$ on $B$, and
the set $B$ is metric compact and therefore complete for this weak topology. It follows from~\eqref{eq: priori2} that $\Pi_{\mD_l}u_l (t,\cdot)\in B$ for $t\in [0,T]$. It remains to estimate $d_B\bigl(\Pi_{\mD_l}u_l(s),\Pi_{\mD_l}u_l(s')\bigr)$ for $0\leq s\leq s'\leq T$. In the following, $C$ denotes a generic constant that may change from one line to the next but does not depend on $l$ or $i$.

We first define the interpolator $P_{\mD_l} : H^1_0(\Omega) \rightarrow X_{\mD_l,0}$ by 
\begin{equation}\label{def P}
P_{\mD_l} \phi := \text{argmin}_{w\in X_{\mD_l,0}} 
\bigl(\|\Pi_{\mD_l} w - \phi\|_{L^{1+\ms}(\Omega)} + \|\nabla_{\mD_l} w-\nabla \phi\|_{L^2(\Omega)}\bigr).
\end{equation}
We rewrite $\Pi_{\mD_l}u_l(s')-\Pi_{\mD_l}u_l(s)$ as the sum of its jumps $\delta t^{(j+\frac12)} \delta_{\mD_l}^{(j+\frac12)} u_l$ at points $\bigl(t^{(n)}\bigr)_{n=n_1,\cdots,n_2}$ between $s$ and $s'$. Using the definition of $|{\cdot}|_{*,\mD_l}$, the Cauchy--Schwarz inequality and Lemma~\ref{lem: dualnorm} we obtain
\begin{align}\label{eq: disAS1}
\bigg|\int_\Omega \bigg(\Pi_{\mD_l}u_l(s',\x)-\Pi_{\mD_l}&u_l(s,\x)\bigg)\Pi_{\mD_l}P_{\mD_l} \phi_i(\x) d\x\bigg|\nonumber\\
&=
\bigg|\int_{t^{(n_1)}}^{t^{(n_2+1)}}\int_\Omega \delta_{\mD_l} u(t,\x)\Pi_{\mD_l}P_{\mD_l} \phi_i(\x) d\x dt\bigg|\nonumber\\
&\leq 
\int_{t^{(n_1)}}^{t^{(n_2+1)}} |\delta_{\mD_l} u_l(t)|_{*,\mD_l}\|\nabla_{\mD_l} P_{\mD_l} \phi_i\|_{L^2(\Omega)} dt\nonumber\\
&\leq 
C^{1/2} (t^{(n_2+1)}-t^{(n_1)})^{1/2}\|\nabla_{\mD_l} P_{\mD_l} \phi_i\|_{L^2(\Omega)}.
\end{align}
By noting that $t^{(n_2+1)}-t^{(n_1)}\leq |s'-s| + \delta t_{\mD_l}$, we deduce from~\eqref{eq: priori2} and~\eqref{eq: disAS1} that 
\begin{align}\label{eq: disAS2}
\bigg|\int_\Omega &\bigg(\Pi_{\mD_l}u_l(s',\x)-\Pi_{\mD_l}u_l(s,\x)\bigg)\phi_i(\x) d\x\bigg|\nonumber\\
&\leq
\bigg|\int_\Omega \bigg(\Pi_{\mD_l}u_l(s',\x)-\Pi_{\mD_l}u_l(s,\x)\bigg)\Pi_{\mD_l}P_{\mD_l} \phi_i(\x) d\x\bigg|\nonumber\\
&\quad +
\bigg|\int_\Omega \bigg(\Pi_{\mD_l}u_l(s',\x)-\Pi_{\mD_l}u_l(s,\x)\bigg)\bigg(\Pi_{\mD_l}P_{\mD_l} \phi_i(\x)- \phi_i(\x)\bigg) d\x\bigg|\nonumber\\
&\leq 
C \big( |s'-s| + \delta t_{\mD_l}\big)^{1/2}\|\nabla_{\mD_l} P_{\mD_l} \phi_i\|_{L^2(\Omega)}
+
C\|\Pi_{\mD_l}P_{\mD_l} \phi_i- \phi_i\|_{L^{1+1/m}(\Omega)}.
\end{align}
It follows from~\eqref{def P} and the consistency of $(\mD_l)_{l\ge 1}$ that $\|\Pi_{\mD_l}P_{\mD_l}\phi_i - \phi_i\|_{L^{1+1/m}(\Omega)}\leq C\hat{S}_{\mD_l}(\phi_i)$ and $\|\nabla_{\mD_l} P_{\mD_l} \phi_i\|_{L^2(\Omega)}
\leq 
\hat{S}_{\mD_l}(\phi_i) + \|\nabla \phi_i\|_{L^2(\Omega)}$
Since $\hat{S}_{\mD_l}(\phi_i) \rightarrow 0 $ as $l\rightarrow \infty$, there exists a constant $C_{\phi_i}$  depending only on $\phi_i$ such that $\hat{S}_{\mD_l}(\phi_i) + \|\nabla \phi_i\|_{L^2(\Omega)} \leq C_{\phi_i}$. Hence, we estimate the right hand side of~\eqref{eq: disAS2} to obtain
\begin{equation*}
\bigg|\int_\Omega \bigg(\Pi_{\mD_l}u_l(s',\x)-\Pi_{\mD_l}u_l(s,\x)\bigg)\phi_i(\x) d\x\bigg|
\leq 
C \big( |s'-s| + \delta t_{\mD_l}\big)^{1/2} C_{\phi_i}
+
C\hat{S}_{\mD_l}(\phi_i).
\end{equation*}
Together with the definition of metric $d_B$, this implies
\begin{align*}
d_B\bigl(\Pi_{\mD_l}u_l(s),\Pi_{\mD_l}u_l(s')\bigr) 
&\leq 
\sum_{i\in\N}  \frac{\text{min}(1,C |s'-s|^{1/2} C_{\phi_i})}{2^i}\\
&\quad+
\sum_{i\in\N}  \frac{\text{min}(1,C\delta t_{\mD_l}^{1/2} C_{\phi_i} + C\hat{S}_{\mD_l}(\phi_i))}{2^i}.
\end{align*}
Using the dominated convergence theorem for series and the fact that, for any $i\in\N$, $\hat{S}_{\mD_l}(\phi_i) \rightarrow 0 $ and $\delta t_{\mD_l}\rightarrow 0$ as $l\rightarrow \infty$, we see that 
\begin{align*}
&\lim_{|s'-s|\rightarrow 0} \sum_{i\in\N}  \frac{\text{min}(1,C |s'-s|^{1/2} C_{\phi_i})}{2^i} = 0\\
\text{and}\quad
&\lim_{l\rightarrow \infty} \sum_{i\in\N}  \frac{\text{min}(1,C\delta t_{\mD_l}^{1/2} C_{\phi_i} + C\hat{S}_{\mD_l}(\phi_i))}{2^i} = 0.
\end{align*}
Hence the assumptions of the discontinuous Ascoli--Arzela theorem are satisfied and the proof is complete.
\end{proof}

We recall the following lemma (see e.g.~\cite[Lemma 4.8]{Droniou.et.al2018}).

\begin{lemma}[Regularity of the limit]\label{lem: regu lim}
Let  $(\mD_l)_{l\ge 1}$ be a sequence of space-time gradient discretisations, in the sense of Definition~\ref{def: gdm}, that is limit-conforming. Let $v_l:=(v_l^{(n)})_{n=0,\cdots,N_l}\subset X_{\mD_l}$ for any $l\in \N$, be such that 
$\bigl(\nabla_{\mD_l}v_l\bigr)_{l\in \N}$ is bounded in $L^2(\Omega_T)^d$.

Then, there exists $v\in L^2(0,T;H^1_0(\Omega))$ such that, along a subsequence as $l\to\infty$,
\[
\Pi_{\mD_l}v_l\rightarrow v\text{ weakly in } L^2(\Omega_T)\mbox{ and }
\nabla_{\mD_l}v_l\rightarrow v\text{ weakly in } L^2(\Omega_T)^d.
\]
\end{lemma}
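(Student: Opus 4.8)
The plan is to combine weak $L^2$-compactness with the limit-conformity of the sequence. Since a limit-conforming sequence of gradient discretisations is coercive, there is $C_p$ such that $\|\Pi_{\mD_l}w\|_{L^2(\Omega)}\le C_p\|\nabla_{\mD_l}w\|_{L^2(\Omega)}$ for all $w\in X_{\mD_l,0}$; applying this to each $v_l^{(n)}$ and summing over the time steps yields $\|\Pi_{\mD_l}v_l\|_{L^2(\Omega_T)}\le C_p\|\nabla_{\mD_l}v_l\|_{L^2(\Omega_T)}$, which is bounded by assumption. Up to a subsequence we therefore have $\Pi_{\mD_l}v_l\to v$ weakly in $L^2(\Omega_T)$ and $\nabla_{\mD_l}v_l\to \bfG$ weakly in $L^2(\Omega_T)^d$, for some $v\in L^2(\Omega_T)$ and $\bfG\in L^2(\Omega_T)^d$. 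It then remains to prove $v\in L^2(0,T;H^1_0(\Omega))$ and $\bfG=\nabla v$.

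For this I would test against tensorised functions $\vphi(t,\x)=\theta(t)\psi(\x)$, with $\theta\in C_c^\infty(0,T)$ and $\psi\in H_\dive(\Omega)$. For a.e.\ $t$, the definition of $W_{\mD_l}$ gives
\[
\Bigl|\int_\Omega\bigl(\nabla_{\mD_l}v_l(t,\x)\cdot\psi(\x)+\Pi_{\mD_l}v_l(t,\x)\,\div\psi(\x)\bigr)\,d\x\Bigr|\le W_{\mD_l}(\psi)\,\|\nabla_{\mD_l}v_l(t,\cdot)\|_{L^2(\Omega)},
\]
and multiplying by $\theta(t)$, integrating over $(0,T)$ and using the Cauchy--Schwarz inequality gives
\[
\Bigl|\int_{\Omega_T}\bigl(\nabla_{\mD_l}v_l\cdot\vphi+\Pi_{\mD_l}v_l\,\div\vphi\bigr)\,d\x\,dt\Bigr|\le \|\theta\|_{L^2(0,T)}\,W_{\mD_l}(\psi)\,\|\nabla_{\mD_l}v_l\|_{L^2(\Omega_T)},
\]
whose right-hand side tends to $0$ as $l\to\infty$, by limit-conformity and the uniform bound on $\nabla_{\mD_l}v_l$.

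Passing to the limit in the left-hand side using the two weak convergences, we obtain $\int_{\Omega_T}\theta(t)\bigl(\int_\Omega(\bfG(t,\x)\cdot\psi(\x)+v(t,\x)\,\div\psi(\x))\,d\x\bigr)\,dt=0$ for all $\theta\in C_c^\infty(0,T)$ and $\psi\in H_\dive(\Omega)$. Since $\theta$ is arbitrary, for a.e.\ $t$ we get $\int_\Omega(\bfG(t,\cdot)\cdot\psi+v(t,\cdot)\,\div\psi)\,d\x=0$ for every $\psi$ in a fixed countable dense subset of $H_\dive(\Omega)$, hence for every $\psi\in H_\dive(\Omega)$. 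This is exactly the duality characterisation of $H^1_0(\Omega)$ (test first with $\psi\in C_c^\infty(\Omega)^d$ to get $v(t,\cdot)\in H^1(\Omega)$ with $\nabla v(t,\cdot)=\bfG(t,\cdot)$, then integrate by parts with general $\psi$ to get $\gamma(v(t,\cdot))=0$), so $v(t,\cdot)\in H^1_0(\Omega)$ and $\nabla v(t,\cdot)=\bfG(t,\cdot)$ for a.e.\ $t$; integrating $\|v(t,\cdot)\|_{H^1_0(\Omega)}^2\le\|v(t,\cdot)\|_{L^2(\Omega)}^2+\|\bfG(t,\cdot)\|_{L^2(\Omega)}^2$ over $(0,T)$ shows $v\in L^2(0,T;H^1_0(\Omega))$, and the extracted weak convergences are those announced.

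The only delicate point is bookkeeping rather than a genuine difficulty: limit-conformity is a purely spatial estimate, blind to the time variable, so the test functions must be taken in separable form $\theta(t)\psi(\x)$, one must invoke density of their span (and of a countable dense family of $\psi$'s) in the relevant test spaces, and one uses the uniform-in-$l$ bound $\|\nabla_{\mD_l}v_l\|_{L^2(\Omega_T)}\le C$ to make the error term vanish after integrating in $t$. Everything else — weak $L^2$-compactness and the $H^1_0$ duality characterisation — is routine functional analysis, which is why this is a ``recalled'' lemma.
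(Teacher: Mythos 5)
Your proof is correct and follows exactly the standard argument behind this recalled lemma (the paper itself gives no proof, only the citation to \cite[Lemma 4.8]{Droniou.et.al2018}): coercivity, deduced from limit-conformity, gives the uniform $L^2(\Omega_T)$ bound needed for weak compactness, and the tensorised test functions $\theta(t)\psi(\x)$ with $\psi\in H_\dive(\Omega)$ combined with the vanishing of $W_{\mD_l}(\psi)$ identify the weak limit of the gradients and place $v(t,\cdot)$ in $H^1_0(\Omega)$ via the divergence-duality characterisation. The bookkeeping points you flag (countable dense family of $\psi$'s to get a single null set in $t$, uniform bound on $\|\nabla_{\mD_l}v_l\|_{L^2(\Omega_T)}$ to kill the error term) are indeed the only delicate steps, and you handle them correctly.
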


We can now prove a preliminary and weaker version of Theorem \ref{theo:cv.GS}, in which the convergences are
weak.

\begin{theorem}\label{theo:cv.GS:weak}
Under the assumptions of Theorem \ref{theo:cv.GS}, there exists a weak solution $\bar u$ to \eqref{eq: PME} in the sense of Definition~\ref{def: weak sol} such that, up to a subsequence as $l\to\infty$,
\begin{itemize}
\item $\Pi_{\mD_l}\beta(u_l)\to \beta(u)$ weakly in $L^2(\Omega_T)$,
\item $\nabla_{\mD_l}\beta(u_l)\to \nabla \beta(u)$ weakly in $L^2(\Omega_T)^d$.
\end{itemize}
\end{theorem}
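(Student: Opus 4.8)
The plan is to pass to the limit in the gradient scheme \eqref{eq: gdmscheme} using the a priori estimates and compactness results already established. First I would extract convergent subsequences: by Lemma~\ref{lem: strongconv} and Lemma~\ref{lem: regu lim} (whose hypothesis is met thanks to the bound on $\|\nabla_\mD\beta(u)\|_{L^2(\Omega_T)}$ in \eqref{eq: priori2}), there exists $\bar\beta\in L^2(0,T;H^1_0(\Omega))$ such that, along a subsequence, $\Pi_{\mD_l}\beta(u_l)\to\bar\beta$ weakly in $L^2(\Omega_T)$ (strongly if $m\in(0,1)$, and at least strongly in $L^1$ if $m>1$ by Lemma~\ref{lem: strongconv}), and $\nabla_{\mD_l}\beta(u_l)\to\nabla\bar\beta$ weakly in $L^2(\Omega_T)^d$. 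By Lemma~\ref{lem: uniform weak conv} there is also $\bar u\in C([0,T];L^{m+1}(\Omega)_{\mathrm w})$ with $\Pi_{\mD_l}u_l(t)\to\bar u(t)$ weakly in $L^{m+1}(\Omega)$, uniformly in $t$. The first key algebraic point is to identify $\bar\beta=\beta(\bar u)$: since $\Pi_{\mD_l}\beta(u_l)=|\Pi_{\mD_l}u_l|^{m-1}\Pi_{\mD_l}u_l$ by \eqref{PiD:commute}, and $\Pi_{\mD_l}\beta(u_l)\to\bar\beta$ strongly in $L^{r}(\Omega_T)$ (with $r=2$ or, for $m>1$, in $L^1$; one can also use that $\|\Pi_{\mD_l}u_l\|_{L^{m+1}}$ is bounded to upgrade integrability), one gets $\Pi_{\mD_l}u_l\to|\bar\beta|^{1/m-1}\bar\beta$ in measure, hence $\bar u=|\bar\beta|^{1/m-1}\bar\beta$ and $\bar\beta=\beta(\bar u)=|\bar u|^{m-1}\bar u$.

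Next I would pass to the limit in each term of \eqref{eq: gdmscheme} with a fixed smooth test function. Choose $\varphi\in C^\infty_c([0,T)\times\Omega)$ (or more generally $\varphi\in L^2(0,T;H^1_0(\Omega))$ with enough time regularity) and set $\phi^{(n)}:=P_{\mD_l}\varphi(t^{(n)},\cdot)$ (the interpolant from \eqref{def P}); consistency gives $\Pi_{\mD_l}\phi\to\varphi$ in $L^{1+\ms}$-in-space and $\nabla_{\mD_l}\phi\to\nabla\varphi$ in $L^2$. For the accumulation term, use a discrete integration by parts in time: $\langle\delta_\mD u,\Pi_\mD\phi\rangle_{L^2(\Omega_T)}=-\sum_n\langle\Pi_\mD u^{(n+1)},\tfrac{\Pi_\mD\phi^{(n+1)}-\Pi_\mD\phi^{(n)}}{\delta t}\rangle\,\delta t-\langle\Pi_\mD u^{(0)},\Pi_\mD\phi^{(0)}\rangle+(\text{terminal term, zero for }\varphi(T,\cdot)=0)$, then use $\Pi_{\mD_l}u_l\rightharpoonup\bar u$ and the convergence of the discrete time-derivative of $\phi$ to $\partial_t\varphi$, together with $\Pi_{\mD_l}\mI_{\mD_l}u_0\to u_0$ (consistency of the interpolation) to get $-\int_0^T\int_\Omega\bar u\,\partial_t\varphi-\int_\Omega u_0\varphi(0,\cdot)$. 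For the diffusion term, the weak convergence $\nabla_{\mD_l}\beta(u_l)\rightharpoonup\nabla\beta(\bar u)$ combined with the \emph{strong} convergence of $\nabla_{\mD_l}\phi$ handles the pairing, but one must also handle the coefficient $\Lambda(\cdot,\Pi_{\mD_l}u_l)$: since $\Pi_{\mD_l}u_l\to\bar u$ in measure (from the strong $L^r$ convergence of $\Pi_{\mD_l}\beta(u_l)$ and continuity of $\psi(z)=|z|^{1/m-1}z$) and $\Lambda$ is continuous in its second argument and bounded, $\Lambda(\cdot,\Pi_{\mD_l}u_l)\nabla_{\mD_l}\phi\to\Lambda(\cdot,\bar u)\nabla\varphi$ strongly in $L^2(\Omega_T)^d$ by dominated convergence, so the product with the weakly convergent $\nabla_{\mD_l}\beta(u_l)$ passes to the limit. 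The source term is immediate since $\Pi_{\mD_l}\phi\to\varphi$ strongly enough and $f\in L^2$. This yields the weak formulation \eqref{eq: weak sol} in the distributional-in-time sense; a standard density/regularity argument then upgrades it to hold for all $\varphi\in L^2(0,T;H^1_0(\Omega))$, gives $\partial_t\bar u\in L^2(0,T;H^{-1}(\Omega))$, and combined with $\bar u\in C([0,T];L^{m+1}(\Omega)_{\mathrm w})$ from Lemma~\ref{lem: uniform weak conv} and $\zeta(\bar u)\in L^\infty(0,T;L^1(\Omega))$ from the $L^\infty(0,T;L^{m+1})$ bound, verifies all items of Definition~\ref{def: weak sol}, including $\bar u(0,\cdot)=u_0$.

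I expect the main obstacle to be the nonlinear coefficient term $\langle\Lambda(\cdot,\Pi_{\mD_l}u_l)\nabla_{\mD_l}\beta(u_l),\nabla_{\mD_l}\phi\rangle$: one only has \emph{weak} $L^2$ convergence of $\nabla_{\mD_l}\beta(u_l)$, so everything hinges on getting \emph{strong} convergence of $\Lambda(\cdot,\Pi_{\mD_l}u_l)$ (equivalently, convergence in measure of $\Pi_{\mD_l}u_l$), which is exactly what Lemma~\ref{lem: strongconv} buys us via the identity $\Pi_{\mD_l}u_l=\psi(\Pi_{\mD_l}\beta(u_l))$ with $\psi$ continuous. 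A secondary subtlety, in the fast-diffusion range $m<1$, is that the natural strong convergence from Lemma~\ref{lem: strongconv} is only in $L^2(\Omega_T)$ while $\bar u$ lives in $L^{m+1}$ with $m+1<2$; one keeps track of integrability carefully, using the uniform $L^\infty(0,T;L^{m+1})$ bound to control $\Pi_{\mD_l}u_l$ and interpolation/Vitali to justify the passages to the limit. The identification $\bar\beta=\beta(\bar u)$ and the convergence in measure are the crux; once they are in hand, every remaining step is a routine limit passage of the kind standard in the GDM literature.
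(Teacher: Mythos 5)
Your argument is correct and follows essentially the same route as the paper: a priori bounds, strong compactness of $\Pi_{\mD_l}\beta(u_l)$ (Lemma~\ref{lem: strongconv}), uniform-in-time weak compactness of $\Pi_{\mD_l}u_l$ (Lemma~\ref{lem: uniform weak conv}), regularity of the limit via Lemma~\ref{lem: regu lim}, identification of the nonlinear limit, and passage to the limit in the scheme with interpolated test functions, discrete integration by parts in time, and dominated convergence for the $\Lambda(\cdot,\Pi_{\mD_l}u_l)\nabla_{\mD_l}\phi$ factor. The only cosmetic differences are that the paper identifies $\bar\beta=\beta(\bar u)$ by invoking the Minty trick (Lemma~\ref{lem: ap1}) directly rather than by inverting $\beta$ and arguing through convergence in measure (equivalent here since $\beta$ is a homeomorphism of $\R$), and that it uses tensorial test functions $\varphi(t^{(n-1)})P_{\mD_l}\psi$ followed by a density argument rather than a space--time interpolant.
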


\begin{proof}

\underline{Step 1: Convergence of discrete solutions}.

Using the Minty trick~\cite[Lemma 3.5]{Droniou2016} recalled in Lemma~\ref{lem: ap1}, we deduce from Lemmas~\ref{lem: uniform weak conv} and~\ref{lem: strongconv} that $\bar{\beta} = \beta(\bar{u})$ a.e.\ on $\Omega_T$, that $\bar u\in C([0,T];L^{m+1}(\Omega)_{\mathrm{w}})$, and that $\Pi_{\mD_l}u_l\to \bar u$ uniformly-in-time and weakly in $L^{m+1}(\Omega)$. Moreover, by consistency of the gradient discretisations, $\Pi_{\mD_l}u_l(0)= \Pi_{\mD_l} \mI_{\mD_l}u_0\to u_0$ in $L^{m+1}(\Omega)$. Hence, the uniform-in-time weak-in-space convergence shows that $\bar u(0)=u_0$.
Owing to Lemma~\ref{lem: strongconv}, the estimate~\eqref{eq: priori2} and  Lemma~\ref{lem: regu lim} we have 
$\beta(\bar{u})\in L^2(0,T;H^1_0(\Omega))$ and
\begin{align}
&\Pi_{\mD_l}\beta(u_l) \rightarrow \beta(\bar{u})\quad\text{weakly in } L^2(\Omega_T), \label{eq: weakcon1}\\
&\nabla_{\mD_l}\beta(u_l)\rightarrow \nabla\beta(\bar{u})\quad\text{weakly in } L^2(\Omega_T)^d.\label{eq: weakcon2}
\end{align}
Lemma~\ref{lem: strongconv} also gives, up to a subsequence, the a.e.\ convergence of $\Pi_{\mD_l}\beta(u_l)$ to $\beta(\bar{u})$. As $\beta$ is strictly increasing, this yields
\begin{equation}\label{eq:ae.conv}
\Pi_{\mD_l}u_l\to \bar{u}\quad\text{a.e.\ on $\Omega_T$}.
\end{equation}

Since $\zeta$ is a convex continuous function,
we also deduce from~Lemma~\ref{lem: uniform weak conv} and Lemma~\ref{lem: ap2} that for any $t\in[0,T]$
\begin{equation*}
\int_\Omega \zeta(\bar{u})(t,\x)d\x \leq \liminf_{l\rightarrow\infty} \int_\Omega \zeta(\Pi_{\mD_l}u_l)(t,\x)d\x.
\end{equation*}
Together with~\eqref{eq: priori2} this implies $\zeta(\bar{u})\in L^{\infty}(0,T;L^1(\Omega))$, which shows that $\bar u$ satisfies (i) and (ii) in Definition \ref{def: weak sol}.

\noindent
\underline{Step 2: Passing to the limit in scheme~\eqref{eq: gdmscheme}}.

Let $ \varphi\in C_c^1(-\infty,T)$ and $\psi\in H^{1}_0(\Omega)\cap L^{1+1/m}(\Omega)$. Recalling the definition \eqref{def P} of $P_{\mD_l}$, we take $\phi:= (\varphi(t^{(n-1)})P_{\mD_l}\psi)_{n=0,\ldots,N}$ as test function in~\eqref{eq: gdmscheme} (with $t^{(-1)}=t^{(0)}=0$). 
This gives $T_1^{(l)} + T_2^{(l)} = T_3^{(l)}$ where, dropping the indices $l$ for legibility,
\begin{align*}
T_1^{(l)}
&:=
\sum_{n=0}^{N-1} \varphi(t^{(n)}) \delta t^{(n+\frac12)}\int_{\Omega}\delta_{\mD}^{(n+\frac12)} u(\x)\Pi_\mD P_{\mD}\psi(\x)\,d\x\\
T_2^{(l)}&:= \sum_{n=0}^{N-1} \varphi(t^{(n)}) \delta t^{(n+\frac12)}\int_{\Omega} \Lambda(\x,\Pi_\mD u^{(n+1)}(\x))\nabla_\mD \beta(u^{(n+1)})(\x)\cdot  \nabla_\mD P_{\mD}\psi(\x)\,d\x\\
T_3^{(l)}&:= \sum_{n=0}^{N-1} \varphi(t^{(n)}) \int_{t^{(n)}}^{t^{(n+1)}}\int_{\Omega} f(t,\x)\Pi_\mD P_{\mD}\psi(\x)\,d\x dt.
\end{align*}
Using the following equality (discrete integrate-by-parts, see \cite[Eq.~(D.15)]{Droniou.et.al2018})
\begin{align*}
\sum_{n=0}^{N-1} \varphi(t^{(n)}) \delta^{(n+\frac12)}\delta_{\mD}^{(n+\frac12)} u
&=
\sum_{n=0}^{N-1} \varphi(t^{(n)}) \bigl(\Pi_{\mD} u^{(n+1)} -\Pi_{\mD} u^{(n)}\bigr)\\
&=
\sum_{n=0}^{N-1} \bigl(\varphi(t^{(n+1)})-\varphi(t^{(n)})\bigr)\Pi_{\mD} u^{(n+1)}
-\varphi(0)\Pi_{\mD} u^{(0)},
\end{align*}
we transform $T_1^{(l)}$ into
\begin{align*}
T_1^{(l)} 
= 
-\int_0^T \varphi'(t)\int_{\Omega} \Pi_{\mD} u(t,\x)\Pi_\mD P_{\mD}\psi(\x)\,d\x dt
- \varphi(0)\int_{\Omega} \Pi_{\mD} u^{(0)}(\x)\Pi_\mD P_{\mD}\psi(\x)\,d\x.
\end{align*}
By setting $\varphi_\mD(t):= \varphi(t^{(n)})$ for $t\in (t^{(n)},t^{(n+1)})$, we have
\begin{align*}
T_2^{(l)}
&= \int_0^T \varphi_\mD(t)\int_{\Omega} \Lambda(\x,\Pi_\mD u(t,\x))\nabla_\mD \beta(u)(t,\x)\cdot  \nabla_\mD P_{\mD}\psi(\x)\,d\x dt,\\
T_3^{(l)}
&=\int_0^T \varphi_\mD(t)\int_{\Omega} f(t,\x)\Pi_\mD P_{\mD}\psi(\x)\,d\x dt.
\end{align*}
Since $\varphi_\mD \rightarrow \varphi$ uniformly on $[0,T]$, $\Pi_\mD P_{\mD}\psi\rightarrow \psi$ in $L^2(\Omega)\cap L^{1+1/m}(\Omega)$, $\nabla_\mD P_{\mD}\psi \rightarrow \nabla\psi$ in $L^2(\Omega)^d$,
$\Lambda(\cdot,\Pi_\mD u)\to\Lambda(\cdot,\bar{u})$ a.e.\ on $\Omega_T$ (by continuity of $\Lambda$ with respect to its second argument and \eqref{eq:ae.conv}) and $\Lambda$ is bounded,
letting $l\rightarrow\infty$ in $T_1^{(l)} + T_2^{(l)} = T_3^{(l)}$ we see that $\bar{u}$ satisfies
\begin{gather*}
\begin{align*}
&-\int_0^T \varphi'(t)\int_{\Omega} \bar{u}(t,\x) \psi(\x)\,d\x dt
- \varphi(0)\int_{\Omega}  u_0(\x) \psi(\x)\,d\x\\
&+
\int_0^T \varphi(t)\int_{\Omega} \Lambda(\x,\bar{u}(t,\x))\nabla \beta(\bar{u})(t,\x)\cdot  \nabla\psi(\x)\,d\x dt
=
\int_0^T \varphi(t)\int_{\Omega} f(t,\x)\psi(\x)\,d\x dt.
\end{align*}
\end{gather*}
The above equality also holds with $\varphi(t)\psi(\x)$ replaced by a tensorial function in $C_c^\infty(\Omega_T)$. Hence, from the density of tensorial functions in $L^2(0,T;H^1_0(\Omega))$ \cite[Corollary 1.3.1]{poly} and noting that $\beta(\bar{u})\in L^2(0,T;H^1(\Omega))$ and $f\in L^2(\Omega_T)$, we deduce that $\partial_t\bar{u}$ belongs to $L^{2}(0,T;H^{-1}(\Omega))$ and that $\bar{u}$ satisfies~\eqref{eq: weak sol}.
\end{proof}

\section{The uniform-in-time convergence result}\label{sec:unif_conv}

\begin{lemma}\label{lem: energy eq}
Let $\bar{u}$ be a weak solution of~\eqref{eq: PME}. Then
\begin{enumerate} [label=(\roman*)]
\item for any $T_0\in[0,T]$
\begin{align*}
\int_\Omega \zeta(\bar{u})(T_0,\x)d\x{}&
+\int_0^{T_0}\big\langle\Lambda(\cdot,\bar{u}(t,\cdot))\nabla\beta(\bar{u})(t,\cdot),\nabla\beta(\bar{u})(t,\cdot)\big\rangle_{L^2(\Omega)}dt\\
&=
\int_\Omega \zeta(u_0)(\x)d\x 
+ \int_0^{T_0} \langle f(t),\beta(\bar{u})(t)\rangle_{L^2(\Omega)} dt,
\end{align*}
\item the function 
\[
t\in [0,T]\mapsto \int_\Omega \zeta(\bar{u})(t,\x)d\x \in [0,\infty)
\]
is continuous and bounded,
\item $\bar{u}$ is continuous $[0,T]\rightarrow L^{m+1}(\Omega)$ for the strong topology of $L^{m+1}(\Omega)$.
\end{enumerate}
\end{lemma}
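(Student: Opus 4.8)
The plan is to obtain the energy equality in~(i) by testing the weak formulation~\eqref{eq: weak sol} with $\beta(\bar u)$ itself, and then to deduce (ii) and (iii) from (i) by soft arguments. For~(i), I would fix $T_0\in[0,T]$ and use $\phi=\mathbf 1_{(0,T_0)}\beta(\bar u)$ as test function in~\eqref{eq: weak sol}; this is admissible since $\beta(\bar u)\in L^2(0,T;H^1_0(\Omega))$ by Definition~\ref{def: weak sol}(ii). This gives
\[
\int_0^{T_0}{}_{H^{-1}}\langle\partial_t\bar u(t),\beta(\bar u)(t)\rangle_{H^1_0}\,dt
+\int_0^{T_0}\langle\Lambda(\cdot,\bar u)\nabla\beta(\bar u),\nabla\beta(\bar u)\rangle_{L^2(\Omega)}\,dt
=\int_0^{T_0}\langle f,\beta(\bar u)\rangle_{L^2(\Omega)}\,dt,
\]
so the statement reduces to the chain-rule identity
\[
\int_0^{T_0}{}_{H^{-1}}\langle\partial_t\bar u(t),\beta(\bar u)(t)\rangle_{H^1_0}\,dt
=\int_\Omega\zeta(\bar u)(T_0,\x)\,d\x-\int_\Omega\zeta(u_0)(\x)\,d\x
\]
(using $\zeta'=\beta$ and $\bar u(0,\cdot)=u_0$).

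This identity is the crux. Because $\bar u$ is not smooth in time and $\beta$ is degenerate or singular, I would establish it by regularising $\bar u$ in time (after a suitable extension past $t=0$ and $t=T_0$), applying the classical chain rule to the time-regularised function, and letting the regularisation parameter tend to $0$, exploiting the convexity of $\zeta$ to control the error terms by the already-available bounds $\|\nabla\beta(\bar u)\|_{L^2(\Omega_T)}$ and $\|\partial_t\bar u\|_{L^2(0,T;H^{-1}(\Omega))}$. This is an integration-by-parts formula of Alt--Luckhaus type; its GDM version --- which additionally yields that $t\mapsto\int_\Omega\zeta(\bar u)(t,\x)\,d\x$ is continuous and that the equality holds for every $T_0\in[0,T]$ --- I would invoke (cf.\ \cite{Droniou2016,Droniou.et.al2018}), noting that the adaptation to the present non-Lipschitz $\beta$ uses the cutoff functions \eqref{def:cutoff.beta}--\eqref{def:cutoff.beta2}. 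I expect this step to be the main obstacle; note in particular that passing to the limit in the \emph{discrete} energy equality only gives an inequality for $\bar u$, since the diffusion term is quadratic in the weakly converging $\nabla_{\mD_l}\beta(u_l)$, so a direct argument on $\bar u$ is needed.

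Part~(ii) then follows at once: the right-hand side of~(i) equals $\int_\Omega\zeta(u_0)\,d\x+\int_0^{T_0}\bigl[\langle f,\beta(\bar u)\rangle_{L^2(\Omega)}-\langle\Lambda(\cdot,\bar u)\nabla\beta(\bar u),\nabla\beta(\bar u)\rangle_{L^2(\Omega)}\bigr]\,dt$, and the integrand is in $L^1(0,T)$ (a product of $L^2(\Omega_T)$ functions, $\Lambda$ being bounded), so $T_0\mapsto\int_\Omega\zeta(\bar u)(T_0,\x)\,d\x$ is absolutely continuous, in particular continuous; its boundedness is exactly $\zeta(\bar u)\in L^\infty(0,T;L^1(\Omega))$ from Definition~\ref{def: weak sol}(ii).

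For~(iii), I would use that $L^{m+1}(\Omega)$ is uniformly convex since $m+1\in(1,\infty)$. Given $t_n\to t$ in $[0,T]$, Definition~\ref{def: weak sol}(i) gives $\bar u(t_n)\to\bar u(t)$ weakly in $L^{m+1}(\Omega)$, while~(ii) gives $\|\bar u(t_n)\|_{L^{m+1}(\Omega)}^{m+1}=(m+1)\int_\Omega\zeta(\bar u)(t_n,\x)\,d\x\to(m+1)\int_\Omega\zeta(\bar u)(t,\x)\,d\x=\|\bar u(t)\|_{L^{m+1}(\Omega)}^{m+1}$. Weak convergence together with convergence of the norms in a uniformly convex space forces strong convergence (Radon--Riesz property), hence $\bar u\in C([0,T];L^{m+1}(\Omega))$ for the strong topology, which is~(iii).
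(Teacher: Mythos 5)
Your overall route coincides with the paper's: the same test function $\mathbf 1_{(0,T_0)}\beta(\bar u)$ in \eqref{eq: weak sol}, the same reduction of (i) to the chain-rule identity $\int_0^{T_0}{}_{H^{-1}}\langle\partial_t\bar u,\beta(\bar u)\rangle_{H^1_0}\,dt=\int_\Omega\zeta(\bar u)(T_0)\,d\x-\int_\Omega\zeta(u_0)\,d\x$, and the same deductions of (ii) (integrability of the right-hand side of (i)) and (iii) (weak continuity plus norm convergence plus uniform convexity of $L^{m+1}$). Your observation that one cannot simply pass to the limit in the discrete energy \emph{inequality} is also exactly the point that makes a direct argument on $\bar u$ necessary.

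The gap is in the crux itself. Replacing $\partial_t\tilde u$ by difference quotients $d_h\tilde u$ and using convexity of $\zeta$ (above its tangent line, $\zeta(b)-\zeta(a)\le(b-a)\beta(b)$) only produces a \emph{one-sided} inequality for $\int_0^{T_0}\langle\partial_t\bar u,\beta(\bar u)\rangle\,dt$; ``controlling the error terms'' by convexity does not by itself upgrade this to an equality, and this is where your sketch stops short. The paper closes the loop by a time-reversal trick: it first proves the upper bound (using, in addition, Jensen's inequality and the weak continuity of $\bar u$ at $t=0$ together with the lower semicontinuity of $v\mapsto\int_\Omega\zeta(v)$ to handle the boundary term $\frac1h\int_0^h\int_\Omega\zeta(\tilde u)$), and then applies that same upper bound to $v(t):=\bar u(T_0-t)$, for which $\partial_t v(t)=-\partial_t\bar u(T_0-t)$, obtaining the reverse inequality. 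Without this (or an equivalent two-sided tangent-line argument with backward difference quotients) the identity, hence (i), is not established. A minor further point: the cutoff functions \eqref{def:cutoff.beta}--\eqref{def:cutoff.beta2} play no role in this lemma --- the paper's argument works for any increasing $\beta$ with convex primitive --- so deferring to a ``cutoff-adapted'' version of an Alt--Luckhaus lemma from the references is not the mechanism actually used, and no directly citable statement covering the present non-Lipschitz $\beta$ is available there.
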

\begin{remark}[About the initial condition]
This lemma is crucial to obtain the uniform-in-time convergence in $L^{m+1}(\Omega)$ of the approximations in Theorem \ref{theo:cv.GS}. If $u_0$ does not belong to $L^{m+1}(\Omega)$, then (i) does not hold any longer (as $\zeta(u_0)$ is not integrable) and the uniform-in-time convergence result cannot be established.
\end{remark}
\begin{proof}

\noindent
\underline{Proof of (i):}
Take $\phi = \mathbf{1}_{(0,T_0)} \beta(\bar{u})\in L^2(0,T;H^1_0(\Omega))$ in~\eqref{eq: weak sol}, where $\mathbf{1}_A$ denotes the characteristic function of $A$. This gives
\begin{multline}\label{eq: uni time1}
\int_0^{T_0}\, _{H^{-1}}\langle \partial_t\bar{u}(t),\beta(\bar{u})(t) \rangle_{H^1_0} dt\\
+
\int_0^{T_0}\int_\Omega\Lambda(\x,\bar{u}(t,\x))\nabla \beta(\bar{u})(t,\x)\cdot
\nabla \beta(\bar{u})(t,\x)\,dtd\x
=\langle f,\beta(\bar{u})\rangle_{L^2(\Omega_{T_0})}.
\end{multline}
It remains to prove that 
\begin{equation}\label{eq: uni time2}
\int_0^{T_0}\, _{H^{-1}}\langle \partial_t\bar{u}(t),\beta(\bar{u})(t) \rangle_{H^1_0} dt 
=
\int_\Omega \zeta(\bar{u})(T_0,\x)d\x - \int_\Omega \zeta(u_0)(\x)d\x.
\end{equation}
Let $\tilde{u}$ be an extension of $\bar{u}$ outside $[0,T_0]$ obtained setting 
\[
\tilde{u}(t)=
\begin{cases}
\bar{u}(T_0)&\quad\text{for } t > T_0\\
\bar{u}(t)&\quad\text{for } t\in[0,T_0].
\end{cases}
\]
Considering the pointwise values of $\bar u$ makes sense owing to the weak continuity of $\bar u:[0,T]\to L^{m+1}(\Omega)$.
We recast the left hand side of~\eqref{eq: uni time2} using the discrete time derivative of $\tilde{u}$, defined by: for $h>0$ and $t>0$,
\[
d_h\tilde{u} (t):=\frac{\tilde{u} (t+h)-\tilde{u}(t)}{h}.
\]
By weak continuity of $\tilde u$ it is easily checked that $\partial_t \tilde u=\mathbf{1}_{(0,T_0)}\partial_t \bar u\in L^2(0,\infty;H^{-1}(\Omega))$. Hence $d_h \tilde u\to \partial_t \tilde u$ weakly in this space as $h\to 0$ and thus
\begin{align}\label{eq: uni time3}
\int_0^{T_0}\, _{H^{-1}}\langle \partial_t\bar{u}(t),{}&\beta(\bar{u})(t) \rangle_{H^1_0} dt
=
\int_0^\infty\, _{H^{-1}}\langle \partial_t\tilde{u}(t),\mathbf{1}_{(0,T_0)}\beta(\bar{u})(t) \rangle_{H^1_0} dt\nonumber\\
&=\lim_{h\rightarrow 0}
\int_0^\infty\, _{H^{-1}}\langle d_h\tilde{u}(t),\mathbf{1}_{(0,T_0)}\beta(\bar{u})(t) \rangle_{H^1_0} dt\nonumber\\
&=\lim_{h\rightarrow 0}\frac{1}{h}
\int_0^{T_0}\int_\Omega \big(\tilde{u}(t+h,\x)-\tilde{u}(t,\x)\big)\beta(\tilde{u})(t,\x)d\x dt.
\end{align}
Since $\beta$ is increasing, $\zeta$ is convex and above its tangent line, which means that $\zeta(b)-\zeta(a)\geq (b-a)\beta(a)$ for all $a,b\in\R$.
Apply this inequality for the right hand side of~\eqref{eq: uni time3} to get
\begin{align}\label{eq: uni time4}
\int_0^{T_0}\, _{H^{-1}}\langle &\partial_t\bar{u}(t),\beta(\bar{u})(t) \rangle_{H^1_0} dt\nonumber\\
&\leq\lim_{h\rightarrow 0}\frac{1}{h}
\int_0^{T_0}\int_\Omega \zeta(\tilde{u})(t+h,\x)-\zeta(\tilde{u})(t,\x)d\x dt\nonumber\\
&=\lim_{h\rightarrow 0}\frac{1}{h}\bigg[
\int_{T_0}^{T_0+h}\int_\Omega \zeta(\tilde{u})(t,\x)d\x dt - \int_{0}^{h}\int_\Omega \zeta(\tilde{u})(t,\x)d\x dt \bigg]\nonumber\\
&\leq \int_\Omega \zeta(\bar{u})(T_0,\x)d\x - \liminf_{h\rightarrow 0}\frac{1}{h}\int_{0}^{h}\int_\Omega \zeta(\bar{u})(t,\x)d\x dt.
\end{align}
Since $\bar{u}\in C([0,T];L^{m+1}(\Omega)_{\mathrm{w}})$, we have
\[
\frac{1}{h}\int_{0}^{h} \bar{u}(t)dt\rightarrow \bar{u}(0)\,\text{weakly in } L^{m+1}(\Omega)\,\text{as } h\rightarrow 0.
\]
This together with the convexity of $\zeta$ and Jensen's inequality gives
\begin{align}\label{eq: uni time5}
\int_\Omega \zeta(\bar{u})(0,\x)d\x 
&\leq 
\liminf_{h \rightarrow 0}\int_\Omega \zeta\left(\frac{1}{h}\int_{0}^{h}\bar{u}(t,\x)dt\right)d\x\nonumber\\
&\leq 
\liminf_{h \rightarrow 0} \int_\Omega \frac{1}{h}\int_{0}^{h}\zeta(\bar{u})(t,\x)dt d\x.
\end{align}
It follows from~\eqref{eq: uni time4} and~\eqref{eq: uni time5} that 
\begin{equation}\label{eq: uni time6}
\int_0^{T_0}\, _{H^{-1}}\langle \partial_t\bar{u}(t),\beta(\bar{u})(t) \rangle_{H^1_0} dt
\leq 
\int_\Omega \zeta(\bar{u})(T_0,\x)d\x -\int_\Omega \zeta(\bar{u})(0,\x)d\x .
\end{equation}
The reverse inequality of~\eqref{eq: uni time6} is obtained by reversing the time. Indeed, applying~\eqref{eq: uni time6} with 
$\bar{u}$ replaced by $v(t):= \bar{u}(T_0-t)$ for $t\in[0,T_0]$ and noting that $\partial_t v(t)=-\partial_t \bar{u}(T_0-t)$, we deduce
\begin{equation}\label{eq: uni time7}
\int_0^{T_0}\, _{H^{-1}}\langle \partial_t\bar{u}(t),\beta(\bar{u})(t) \rangle_{H^1_0} dt
\geq
\int_\Omega \zeta(\bar{u})(T_0,\x)d\x -\int_\Omega \zeta(\bar{u})(0,\x)d\x .
\end{equation}
Hence, equality~\eqref{eq: uni time2} follows immediately from~\eqref{eq: uni time6} and~\eqref{eq: uni time7}, which completes the proof of (i).

\noindent
\underline{Proof of (ii) and (iii):}
The continuity and boundedness of 
\[
t\in [0,T]\mapsto \int_\Omega \zeta(\bar{u})(t,\x)d\x \in [0,\infty)
\]
is straightforward consequence of (i). 

Let $s\in [0,T]$ and $(s_n)_{n\geq 1}$ be a sequence in $[0,T]$ converging to $s$. Since $\zeta(z)=\frac{1}{m+1}|z|^{m+1}$, it follows from (ii) and the continuity of $\bar{u} : [0,T]\rightarrow L^{m+1}(\Omega)$ for the weak topology of $L^{m+1}(\Omega)$ stated in Lemma~\ref{lem: uniform weak conv} that 
\begin{align*}
&\lim_{n\rightarrow \infty} \|\bar{u}(s_n)\|_{L^{m+1}(\Omega)} = \|\bar{u}(s)\|_{L^{m+1}(\Omega)}\mbox{ and }\\
&\bar{u}(s_n)\rightarrow \bar{u}(s)\text{ weakly in } L^{m+1}(\Omega)\text{ as } n\rightarrow\infty.
\end{align*}
This implies the strong convergence $\bar{u}(s_n)\rightarrow \bar{u}(s)$ in $L^{m+1}(\Omega)$. 
Hence,  $\bar{u}$ is continuous $[0,T]\rightarrow L^{m+1}(\Omega)$, which complete the proof of the lemma.
\end{proof}

We can now prove our main convergence result.
\medskip

\begin{proof}[ of Theorem \ref{theo:cv.GS}]

We consider the subsequence provided by Theorem \ref{theo:cv.GS:weak}.

\medskip

\underline{Proof of the uniform-in-time convergence}.

Let $T_0\in [0,T]$ and $(T_l)_{l\geq 1}$ be a sequence in $[0,T]$ converging to $T_0$. We note that for any $l\geq 1$ there exists an integer number $k\in [1,N]$ such that $T_l\in (t^{(k-1)},t^{(k)}]$. It follows from~\eqref{eq: priori1} that 
\begin{align}\label{eq: uni time conv1}
\int_\Omega \zeta(\Pi_{\mD_l} u_l)(T_l,\x)d\x
+ &\big\langle \Lambda(\cdot,\Pi_{\mD_l} u)\nabla_{\mD_l}\beta(u),
\nabla_{\mD_l}\beta(u)\big\rangle_{\Omega_{T_l}}
\nonumber\\
&\leq\int_\Omega\zeta(\Pi_{\mD_l} u_l^{(0)})(\x)d\x 
+\big\langle f,  \Pi_{\mD_l} \beta(u_l)\big\rangle_{L^2(\Omega_{t^{(k)}})}.
\end{align}
Moving the term involving $\nabla_{\mD_l}\beta(u_l)$ to the right-hand side,
we take the supremum limit as $l$ tends to infinity of the above inequality. Reasoning as in \cite[Proof of Eq.~(48)]{DHM16}, the weak convergence of $\nabla_{\mD_l}\beta(u_l)$ to $\nabla\beta(\bar u)$, the strong convergence in $L^2(0,T)$ of $\mathbf{1}_{(0,T_l)}$ to $\mathbf{1}_{(0,T_0)}$, and the a.e.\ convergence of $\Lambda(\cdot,\Pi_{\mD_l}u_l)$ to $\Lambda(\cdot,\bar{u})$ (see the proof of Theorem \ref{theo:cv.GS:weak}) yield
\begin{align*}
-\big\langle \Lambda(\cdot,\bar{u})\nabla\beta(\bar{u}),\nabla\beta(\bar{u})\big\rangle_{\Omega_{T_0}}
\ge{}& -\liminf_{l\rightarrow\infty}\big\langle\Lambda(\cdot,\Pi_{\mD_l}u_l)\nabla_{\mD_l}\beta(u_l),\nabla_{\mD_l}\beta(u_l)\big\rangle_{\Omega_{T_l}}\\
={}& \limsup_{l\rightarrow\infty} (-\big\langle \Lambda(\cdot,\Pi_{\mD_l}u_l)\nabla_{\mD_l}\beta(u_l),\nabla_{\mD_l}\beta(u_l)\big\rangle_{\Omega_{T_l}}).
\end{align*} 

We thus deduce
\begin{align*}
\limsup_{l\rightarrow\infty} \int_\Omega{}& \zeta(\Pi_{\mD_l} u_l)(T_l,\x)d\x
\\
\leq{}& -\big\langle \Lambda(\cdot,\bar{u})\nabla\beta(\bar{u}),\nabla\beta(\bar{u})\big\rangle_{\Omega_{T_0}}
+\limsup_{l\rightarrow\infty} \int_\Omega\zeta(\Pi_{\mD_l} u_l^{(0)})(\x)d\x\\
{}&+ \limsup_{l\rightarrow\infty} \big\langle f,  \Pi_{\mD_l} \beta(u_l)\big\rangle_{L^2(\Omega_{t^{(k)}})}.
\end{align*}
As $l\to\infty$ we have $t^{(k)}\to T_0$ since $|T_l-t^{(k)}|\le \delta t_{\mD_l}$.
Together with~\eqref{eq: weakcon1} and the convergence $\Pi_{\mD_l} \mI_{D_l} u_0\rightarrow u_0$ in $L^{m+1}(\Omega)$ (consistency of $(\mD_l)_{l\ge 1}$), this implies
\begin{multline*}
\limsup_{l\rightarrow\infty} \int_\Omega \zeta(\Pi_{\mD_l} u_l)(T_l,\x)d\x\\
\leq 
\int_\Omega\zeta( u_0)(\x)d\x
+ \big\langle f,   \beta(\bar{u})\big\rangle_{L^2(\Omega_{T_0})}
-\big\langle \Lambda(\cdot,\bar{u})\nabla\beta(\bar{u}),\nabla\beta(\bar{u})\big\rangle_{\Omega_{T_0}}.
\end{multline*}
Using the energy equality in Lemma~\ref{lem: energy eq} we infer
\begin{equation*}
\limsup_{l\rightarrow\infty} \int_\Omega \zeta(\Pi_{\mD_l} u_l)(T_l,\x)d\x
\leq
\int_\Omega \zeta( \bar{u})(T_0,\x)d\x,
\end{equation*}
or equivalently, since $\zeta(z)=\frac{1}{m+1}|z|^{m+1}$,
\begin{equation*}
\limsup_{l\rightarrow\infty} \|\Pi_{\mD_l} u_l(T_l)\|_{L^{m+1}(\Omega)}
\leq
\| \bar{u}(T_0)\|_{L^{m+1}(\Omega)}.
\end{equation*}
By Lemma~\ref{lem: uniform weak conv} and the uniformly convexity of the space $L^{m+1}(\Omega)$, this implies the strong convergence 
$\Pi_{\mD_l}  u_l(T_l) \rightarrow \bar{u}(T_0)$ in $L^{m+1}(\Omega)$. 
The convergence of $(\Pi_{\mD_l}u_l)_{l\ge 1}$ in $L^\infty(0,T;L^{m+1}(\Omega))$ then follows by \cite[Lemma C.13]{Droniou.et.al2018}.

\medskip

\underline{Proof of the strong convergence of gradients}.

By taking the supremum limit as $l\rightarrow \infty$ of~\eqref{eq: uni time conv1} with $T_l = T$ and noting from the $L^\infty(0,T;L^{m+1}(\Omega))$ convergence of $(\Pi_{\mD_l}u_l)_{l\ge 1}$ that 
\[
\lim_{l\rightarrow\infty} \int_\Omega \zeta(\Pi_{\mD_l} u_l)(T,\x)d\x = \int_\Omega \zeta(\bar{u})(T,\x)d\x,
\]
we obtain
\begin{multline*}
\limsup_{l\rightarrow\infty} \big\langle\Lambda(\cdot,\Pi_{\mD_l}u_l)\nabla_{\mD_l}\beta(u_l),
\nabla_{\mD_l}\beta(u_l)\big\rangle_{\Omega_T}
\\
\leq 
\int_\Omega\zeta( u_0)(\x)d\x
+ \big\langle f,   \beta(\bar{u})\big\rangle_{L^2(\Omega_{T})}
-\int_\Omega \zeta(\bar{u})(T,\x)d\x.
\end{multline*}
Together with the energy equality in Lemma~\ref{lem: energy eq} for $T_0=T$, this implies
\[
\limsup_{l\rightarrow\infty} \big\langle\Lambda(\cdot,\Pi_{\mD_l}u_l)\nabla_{\mD_l}\beta(u_l),
\nabla_{\mD_l}\beta(u_l)\big\rangle_{\Omega_T}
\leq \big\langle\Lambda(\cdot,\bar{u})\nabla\beta(\bar{u}),\nabla\beta(\bar{u})\big\rangle_{\Omega_T}.
\]
The strong convergence of $\nabla_{\mD_l}\beta(u_l)$ follows from the above inequality, the weak convergence~\eqref{eq: weakcon2}, and the a.e.\ convergence $\Lambda(\cdot,\Pi_{\mD_l}u_l)\to \Lambda(\cdot,\bar{u})$, by writing
\begin{multline*}
\underline{\lambda}\|\nabla_{\mD_l}\beta(u_l)-\nabla\beta(\bar{u})\|^2_{L^2(\Omega_T)}
\\
\le \big\langle \Lambda(\cdot,\Pi_{\mD_l}u_l)(\nabla_{\mD_l}\beta(u_l)-\nabla\beta(\bar{u})),
\nabla_{\mD_l}\beta(u_l)-\nabla\beta(\bar{u})\big\rangle_{\Omega_T},
\end{multline*}
by developing the right-hand side, and by taking the superior limit.
\end{proof}
\section{Numerical results}\label{sec:tests}

\subsection{Setting}

In this section, we present numerical results based on two different schemes: the mass-lumped $\mathbb{P}^1$ finite elements (ML$\mathbb{P}^1$ for short) and the Hybrid Mimetic Mixed method (HMM for short). This latter method is a polytopal method, which means that it can be applied to meshes made of general polygons/polyhedras; it is actually a generalisation of the Hybrid Finite Volume (SUSHI) method, the Mimetic Finite Difference method and the Mixed Finite Volume scheme \cite{DEGH09,DE06,sushi,mfdrev}. As shown in \cite[Section 8.4 and Chapter 13]{Droniou.et.al2018}, each of ML$\mathbb{P}^1$ and HMM is a GDM for a certain choice of gradient discretisations, and our previous analysis therefore applies to them. MATLAB codes for these schemes are available at \texttt{https://github.com/jdroniou/matlab-PME}. 

The accuracy of the schemes will be measured against the analytical solution to~\eqref{eq: PME}, with $\Lambda={\rm Id}$ and $f=0$, called the Barenblatt solution~\cite{Pattle1959,Barenblatt52}, which has the following explicit form
\begin{equation}\label{eq: Barenblatt}
u_B(t,x) = t^{-\alpha}[C_B-\gamma |x|^2t^{-2\rho}]_{+}^{1/(m-1)},
\end{equation}
where $[s]_+ = \max\{s,0\}$, $\alpha = \frac{d}{d(m-1)+2}$, $\rho=\frac{\alpha}{d}$, $\gamma = \frac{\alpha(m-1)}{2md}$ and $C_B$ is an arbitrary strictly positive constant.

We consider the domain $\Omega=(-0.5;0.5)\times(-0.5;0.5)$ and the final time $T=1$. The Barenblatt solution is singular at $t=0$ so we run the simulations with the initial condition $u_B(t_0,\cdot)$ for some $t_0>0$.  For $m>1$, we take $t_0=0.1$, and $C_B=0.005$ is small enough so that $u_B$ vanishes on $\partial\Omega$ over the considered time interval. For $m<1$, the Barenblatt solution is much steeper for times around 0.1, and the meshes we consider are not small enough to capture the extreme spike of the solution at $x=0$; we therefore take $t_0=0.5$, and $C_B=0.1$. The boundary conditions are adjusted in the scheme to match the exact values of the $u_B$ on $\partial\Omega$ at the considered times.

Solving the non-linear system resulting from a scheme for the porous medium equation can be difficult, as Newton iterations do not necessarily behave very well \cite{RPK06,LR16}. Some remedy, consisting in re-parametrising the system, can be put in place \cite{BC17}. At the mesh sizes and time steps we consider in our tests, however, we found that a simple Newton algorithm works very well in both regimes. In case of fast diffusion, to avoid the singularity coming from $|u|^{m-1}u$ when $m<1$, the algebraic system is written on the unknown $v=|u|^{m-1}u$; this creates a non-linear reaction term $|v|^{\frac1m-1}v$, which poses no issues to the Newton algorithm since $1/m>1$. In all our tests, we found that 1 to 3 Newton iterations only were necessary to reach a residual in the non-linear system of $10^{-8}$ or less (in supremum norm).

\subsection{Rates of convergence with respect to the mesh size}\label{sec:rates.h}

The tests presented in this section have been run using families of triangular meshes (for ML$\mathbb{P}^1$) or hexagonal meshes (for HMM); the first two elements of each family are presented in Figure \ref{fig:hexa}. We chose a uniform time step $\delta t^{(n+\frac12)}= h^2$ to ensure that the leading truncation error was due to the spatial discretisation.

\begin{figure}
\begin{tabular}{cccc}
\includegraphics[width=0.3\textwidth]{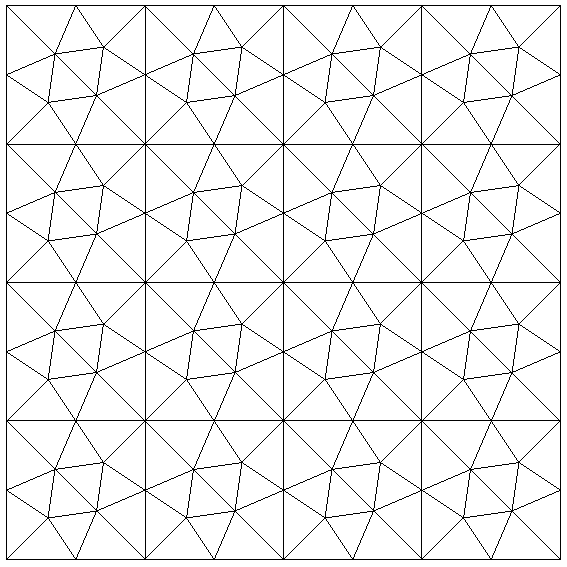}& 
\includegraphics[width=0.3\textwidth]{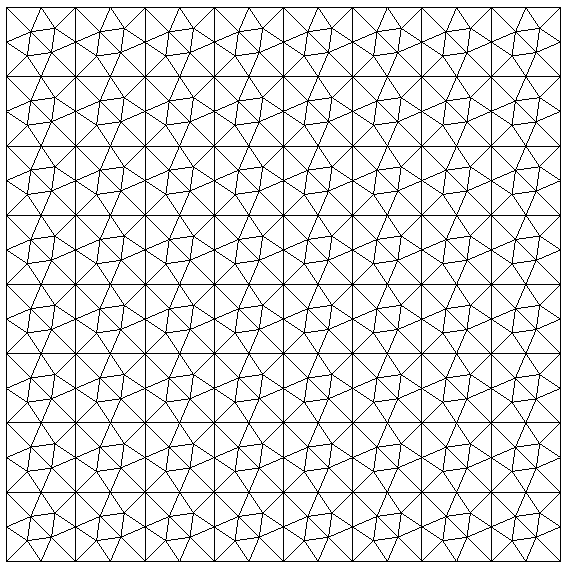}\\
\includegraphics[width=0.3\textwidth]{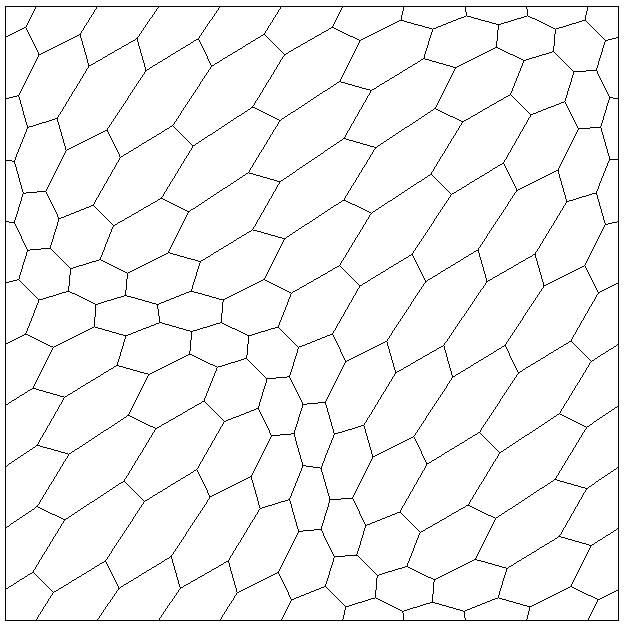}& 
\includegraphics[width=0.3\textwidth]{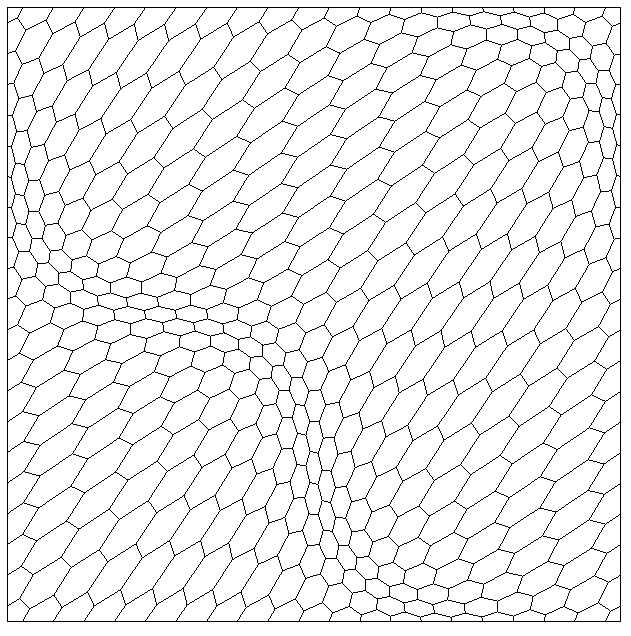}
\end{tabular}
\caption{First two elements in each mesh family used for ML$\mathbb{P}^1$ (top) and HMM (bottom).}
\label{fig:hexa}
\end{figure}

The accuracy of the schemes is assessed by considering the following relative errors on $u$ and $\beta(u)$:
\begin{equation}\label{eq:errors}
\begin{aligned}
E_{u,L^{m+1}}:={}& \frac{\|u_B(t_0+T) - u^{(N)}\|_{L^{m+1}}}{\|u_B(t_0+T)\|_{L^{m+1}}}\quad\mbox{ and }\\
E_{\beta(u),L^2}:={}& \frac{\|\beta(u_B)(t_0+T) - \beta(u^{(N)})\|_{L^{2}}}{\|\beta(u_B)(t_0+T)\|_{L^{2}}}.
\end{aligned}
\end{equation}
Tables \ref{tab:P1_slow_Lmp1}--\ref{tab:HMM_slow_beta} show the results obtained for various values of $m>1$ in the slow diffusion case. The rate of convergence in $L^2$-norm for the error in the (regular) variable $\beta(u)$ is uniformly close to two, for both schemes and irrespective of the value of $m$ (the magnitudes of the errors also do not seem to depend strongly on $m$). On the contrary, for the less regular variable $u$, the rate of convergence in $L^{m+1}$-norm seem to decrease as $m$ increases. To our knowledge, no results in the literature provides rates of convergence for the errors assessed at the final time, as in \eqref{eq:errors}. The only available rates of convergence seem to be for time-space averaged quantities, the closest result being the ones in \cite{NochettoVerdi88} for the conforming $\mathbb{P}^1$ approximation, and with a regularisation parameter (due to approximating the Kirchhoff-tranformed equation, instead of directly considering the porous medium equation), which is not needed in our scheme. With an optimal choice of the regularisation parameter and time steps the error estimates are $\mathcal O(h^{2m/(3m-1)})$ in $L^2(\Omega\times(0,T))$-norm on $\beta(u)$, and $\mathcal O(h^{4m/[(m+1)(3m-1)]})$ in $L^{m+1}(\Omega\times(0,T))$-norm on $u$. For $m\in\{1.5,2,2.5,3\}$, these rates are respectively in the range 0.75--0.85 and 0.38--0.68 (going up as $m$ decreases towards 1). These are rather pessimistic predictions compared to the results presented here. The main reason that could explain this is that they come from an analysis that mixes several norms, including an energy $H^1$-norm on time-integrated errors, which might not lead to optimal rates for errors in Lebesgue norms. The only clear common behaviour between our numerically estimated rates and these theoretical rates is that, when considering errors on $u$, all decrease as $m$ increases. Note however that the $L^{m+1}$-norm becomes more stringent as $m$ increases, which contributes to explaining the reduced rates in this case. We also mention the recent work \cite{DE19}, which provides (possibly) high-order estimates for the stationary Stefan/porous medium equations; the tests provided in this reference also show, in the time-independent setting, that higher-than-expected rates can be obtained in Lebesgue norms. An extension of these results to time-dependent models is the subject of future work.

\begin{table}[h!]
\centering
\begin{tabular}{c| c  c | c  c | c c | c c  }
  & $m=1.5$ & & $m=2.0$ & & $m=2.5$ &  & $m=3.0$ &\\
 $h $ &  $E_{u,L^{m+1}}$ & rate  &  $E_{u,L^{m+1}}$ & rate &  $E_{u,L^{m+1}}$ & rate  &  $E_{u,L^{m+1}}$ & rate  \\
\hline
$0.25$& $1.80\eminus{}00$ & & $6.82\eminus{}01$ &  & $3.80\eminus{}01$ & & $5.21\eminus{}01$ & \\
$0.13$& $1.71\eminus{}01$ & $3.40$ & $8.38\eminus{}02$& $3.03$ & $1.50\eminus{}01$& $1.35$ & $8.49\eminus{}02$& $2.62$\\
$0.06$& $2.49\eminus{}02$  & $2.78$&$2.97\eminus{}02$ & $1.50$& $5.68\eminus{}02$& $1.40$ & $7.09\eminus{}02$& $0.26$\\
$0.03$& $6.10\eminus{}03$ & $2.03$&$1.30\eminus{}02$ & $1.19$& $2.84\eminus{}02$& $1.00$ & $3.95\eminus{}02$& $0.85$
\end{tabular}
\caption{ML$\mathbb{P}^1$: Errors $E_{u,L^{m+1}}$ and convergence rates w.r.t.\ to the mesh size $h$, for uniform time steps $\delta t^{(n+\frac12)}= h^2$.}
\label{tab:P1_slow_Lmp1}
\end{table}

\begin{table}[h!]
\centering
\begin{tabular}{c| c  c | c  c | c c | c c  }
  & $m=1.5$ & & $m=2.0$ & & $m=2.5$ &  & $m=3.0$ &\\
 $h $ &  $E_{\beta(u),L^2}$ & rate  &  $E_{\beta(u),L^2}$ & rate &  $E_{\beta(u),L^2}$ & rate  &  $E_{\beta(u),L^2}$ & rate  \\
\hline
$0.25$& $3.72\eminus{}00$ & & $1.90\eminus{}00$ &  & $9.97\eminus{}01$ & & $6.35\eminus{}01$ & \\
$0.13$& $2.43\eminus{}01$ & $3.93$ & $1.46\eminus{}01$& $3.70$ & $1.09\eminus{}01$& $3.19$ & $1.49\eminus{}01$& $2.09$\\
$0.06$& $2.68\eminus{}02$  & $3.18$&$3.83\eminus{}02$ & $1.93$& $2.65\eminus{}02$& $2.04$ & $3.29\eminus{}02$& $2.18$\\
$0.03$& $6.84\eminus{}03$ & $1.97$&$6.58\eminus{}03$ & $2.54$& $7.79\eminus{}03$& $1.77$ & $8.49\eminus{}03$& $1.95$
\end{tabular}
\caption{ML$\mathbb{P}^1$: Errors $E_{\beta(u),L^2}$ and convergence rates w.r.t.\ to the mesh size $h$, for uniform time steps $\delta t^{(n+\frac12)}= h^2$.}
\label{tab:P1_slow_beta}
\end{table}


\begin{table}[h!]
\centering
\begin{tabular}{c| c  c | c  c | c c | c c  }
  & $m=1.5$ & & $m=2.0$ & & $m=2.5$ &  & $m=3.0$ &\\
 $h $ &  $E_{u,L^{m+1}}$ & rate  &  $E_{u,L^{m+1}}$ & rate &  $E_{u,L^{m+1}}$ & rate  &  $E_{u,L^{m+1}}$ & rate  \\
\hline
$0.24$& $1.00\eminus{}01$ & & $8.77\eminus{}02$ &  & $9.23\eminus{}02$ & & $1.58\eminus{}01$ & \\
$0.13$& $3.27\eminus{}02$ & $1.81$ & $3.65\eminus{}02$& $1.41$ & $4.96\eminus{}02$& $1.00$ & $8.25\eminus{}02$& $1.05$\\
$0.07$& $1.02\eminus{}02$  & $1.72$&$1.59\eminus{}02$ & $1.22$& $2.59\eminus{}02$& $0.95$ & $3.14\eminus{}02$& $1.42$\\
$0.03$& $2.93\eminus{}03$ & $1.80$&$5.18\eminus{}03$ & $1.63$& $1.17\eminus{}02$& $1.16$ & $2.26\eminus{}02$& $0.47$
\end{tabular}
\caption{HMM: Errors $E_{u,L^{m+1}}$ and convergence rates w.r.t.\ to the mesh size $h$, for uniform time steps $\delta t^{(n+\frac12)}= h^2$.}
\label{tab:HMM_slow_Lmp1}
\end{table}

\begin{table}[h!]
\centering
\begin{tabular}{c| c  c | c  c | c c | c c  }
  & $m=1.5$ & & $m=2.0$ & & $m=2.5$ &  & $m=3.0$ &\\
 $h $ &  $E_{\beta(u),L^2}$ & rate  &  $E_{\beta(u),L^2}$ & rate &  $E_{\beta(u),L^2}$ & rate  &  $E_{\beta(u),L^2}$ & rate  \\
\hline
$0.24$& $1.23\eminus{}01$ & & $1.23\eminus{}01$ &  & $1.00\eminus{}01$ & & $8.35\eminus{}02$ & \\
$0.13$& $3.97\eminus{}02$ & $1.83$ & $3.78\eminus{}02$& $1.90$ & $2.90\eminus{}02$& $1.99$ & $5.00\eminus{}02$& $0.83$\\
$0.07$& $1.09\eminus{}02$  & $1.90$&$9.81\eminus{}03$ & $1.99$& $9.94\eminus{}03$& $1.58$ & $1.13\eminus{}02$& $2.19$\\
$0.03$& $2.78\eminus{}03$ & $1.98$&$2.53\eminus{}03$ & $1.96$& $2.68\eminus{}03$& $1.90$ & $2.92\eminus{}03$& $1.96$
\end{tabular}
\caption{HMM: Errors $E_{\beta(u),L^2}$ and convergence rates w.r.t.\ to the mesh size $h$, for uniform time steps $\delta t^{(n+\frac12)}= h^2$.}
\label{tab:HMM_slow_beta}
\end{table}


The results in the fast diffusion case $m<1$ are presented in Tables \ref{tab:P1_fast_Lmp1}--\ref{tab:HMM_fast_beta}.
For ML$\mathbb{P}^1$ we observe a rate of convergence of at least 1 in both $u$ and $\beta(u)$. HMM behaves better, with a rate of at least 1.6, and getting close to 2 as $m$ increases towards one. A possible explanation for this difference of behaviour is that, for the considered meshes and a given size $h$, HMM has more degrees of freedom that ML$\mathbb{P}^1$.

\begin{table}[h!]
\centering
\begin{tabular}{c| c  c | c  c | c c   }
  & $m=0.3$ & & $m=0.5$ & & $m=0.7$ &\\
 $h $ &  $E_{u,L^{m+1}}$ & rate  &  $E_{u,L^{m+1}}$ & rate &  $E_{u,L^{m+1}}$ & rate   \\
\hline
$0.25$& $3.85\eminus{}00$ &       & $2.93\eminus{}01$ &      & $1.40\eminus{}01$ &  \\
$0.13$& $3.52\eminus{}01$ & $3.45$ & $7.65\eminus{}02$& $1.94$ & $5.62\eminus{}02$& $1.32$  \\
$0.06$& $5.88\eminus{}02$  &$2.58$ &$3.36\eminus{}02$ & $1.19$& $2.50\eminus{}02$& $1.17$ \\
$0.03$& $2.58\eminus{}02$  &$1.19$ &$1.56\eminus{}02$ & $1.10$& $1.17\eminus{}02$& $1.09$
\end{tabular}
\caption{ML$\mathbb{P}^1$: Errors $E_{u,L^{m+1}}$ and convergence rates w.r.t.\ to the mesh size $h$, for uniform time steps $\delta t^{(n+\frac12)}= h^2$.}
\label{tab:P1_fast_Lmp1}
\end{table}

\begin{table}[h!]
\centering
\begin{tabular}{c| c  c | c  c | c c   }
  & $m=0.3$ & & $m=0.5$ & & $m=0.7$ &\\
 $h $ &  $E_{\beta(u),L^2}$ & rate  &  $E_{\beta(u),L^2}$ & rate &  $E_{\beta(u),L^2}$ & rate  \\
\hline
$0.24$& $4.52\eminus{}01$ &  & $1.34\eminus{}01$ & & $9.56\eminus{}02$ & \\
$0.13$& $8.81\eminus{}02$ & $2.36$ & $3.71\eminus{}02$& $1.85$ & $3.89\eminus{}02$& $1.30$  \\
$0.07$& $1.68\eminus{}03$  &$2.39$ &$1.66\eminus{}02$ & $1.16$& $1.74\eminus{}02$& $1.16$ \\
$0.03$& $7.60\eminus{}03$  &$1.15$ &$7.82\eminus{}03$ & $1.09$& $8.21\eminus{}03$& $1.09$
\end{tabular}
\caption{ML$\mathbb{P}^1$: Errors $E_{\beta(u),L^2}$ and convergence rates w.r.t.\ to the mesh size $h$, for uniform time steps $\delta t^{(n+\frac12)}= h^2$.}
\label{tab:P1_fast_beta}
\end{table}

\begin{table}[h!]
\centering
\begin{tabular}{c| c  c | c  c | c c   }
  & $m=0.3$ & & $m=0.5$ & & $m=0.7$ &\\
 $h $ &  $E_{u,L^{m+1}}$ &  rate  &  $E_{u,L^{m+1}}$ & rate &  $E_{u,L^{m+1}}$ & rate \\
\hline
$0.24$& $1.18\eminus{}01$ &  & $3.03\eminus{}02$ & & $2.58\eminus{}02$ & \\
$0.13$& $1.48\eminus{}02$ & $3.34$ & $9.31\eminus{}03$& $1.90$ & $7.82\eminus{}03$& $1.92$  \\
$0.07$& $3.73\eminus{}03$  &$2.03$ &$2.46\eminus{}03$ & $1.96$& $2.05\eminus{}03$& $1.97$ \\
$0.03$& $1.23\eminus{}03$  &$1.60$ &$6.97\eminus{}04$ & $1.83$& $5.21\eminus{}04$& $1.99$
\end{tabular}
\caption{HMM: Errors $E_{u,L^{m+1}}$ and convergence rates w.r.t.\ to the mesh size $h$, for uniform time steps $\delta t^{(n+\frac12)}= h^2$.}
\label{tab:HMM_fast_Lmp1}
\end{table}

\begin{table}[h!]
\centering
\begin{tabular}{c| c  c | c  c | c c   }
  & $m=0.3$ & & $m=0.5$ & & $m=0.7$ &\\
 $h $ &  $E_{\beta(u),L^2}$ &  rate  &  $E_{\beta(u),L^2}$ & rate &  $E_{\beta(u),L^2}$ & rate \\
\hline
$0.24$& $3.29\eminus{}02$ &  & $1.52\eminus{}02$ & & $1.81\eminus{}02$ & \\
$0.13$& $4.41\eminus{}03$ & $3.24$ & $4.67\eminus{}03$& $1.90$ & $5.50\eminus{}03$& $1.92$  \\
$0.07$& $1.11\eminus{}03$  &$2.02$ &$1.24\eminus{}03$ & $1.95$& $1.45\eminus{}03$& $1.97$ \\
$0.03$& $3.66\eminus{}04$  &$1.61$ &$3.50\eminus{}04$ & $1.83$& $3.67\eminus{}04$& $1.99$
\end{tabular}
\caption{HMM: Errors $E_{\beta(u),L^2}$ and convergence rate w.r.t.\ to the mesh size $h$, for uniform time steps $\delta t^{(n+\frac12)}= h^2$.}
\label{tab:HMM_fast_beta}
\end{table}

\subsection{Rates with respect to the time step, and errors for front distances}

The tests presented here were obtained by implementing ML$\mathbb{P}^1$ in Python 2.7.12 using FEniCS 1.5~\cite{Fenics}, and the embedded mesh generator. We first present, in Table~\ref{tab: 1b}, the error $E_{u,L^{m+1}}$ for several values of uniform time steps $\delta t^{(n+\frac12)}$ and a fixed space step $h= 2^{-7}$. For moderate $m$, the rates in this table are close to $1$, which is expected given that we use an implicit first-order time stepping. The decay of convergence rate for higher $m$ is due to a saturation of the spatial errors.

\begin{table}[h!]
\centering
\begin{tabular}{c | c  c | c  c | c c | c c  }
 & $m=1.5$ & & $m=2.0$ & & $m=2.5$ & & $m=3$ & \\
 $k $& $E_{u,L^{m+1}}$ & rate &  $E_{u,L^{m+1}}$ & rate &  $E_{u,L^{m+1}}$ & rate & $E_{u,L^{m+1}}$ & rate \\
\hline
$1/4$& $9.73\eminus{}02$ & & $8.15\eminus{}02$ & & $7.18\eminus{}02$ & &$6.72\eminus{}02$& \\
$1/8$& $4.9\eminus{}02$ & $0.99$ & $4.19\eminus{}02$& $0.96$ & $3.77\eminus{}02$& $0.93$ & $3.68\eminus{}02$ &  $0.87$\\
$1/16$& $2.45\eminus{}02$ & $1.00$ &$2.12\eminus{}02$ & $0.98$ & $1.95\eminus{}02$& $0.95$ & $1.99\eminus{}02$& $0.89$\\
$1/32$& $1.21\eminus{}02$ & $1.02$& $1.06\eminus{}02$ & $1.0$  & $1.01\eminus{}02$& $0.95$ & $1.32\eminus{}02$& $0.59$\\
$1/64$& $5.82\eminus{}03$ &$1.06$ &  $5.28\eminus{}03$ & $1.01$& $6.56\eminus{}03$ & $0.62$ & $1.29\eminus{}02$& $0.03$
\end{tabular}
\caption{ML$\mathbb{P}^1$: Errors $E_{u,L^{m+1}}$ and convergence rates w.r.t.\ the time discretisation.}
\label{tab: 1b}
\end{table}

Finally, in Table~\ref{ta: 2}, we compute the front distances at the final time $T$ of the exact and approximate solutions, with fixed space and time steps $h= \delta t^{(n+\frac12)}= 2^{-7}$. In this table, we set
\[
d_v:= \max_{x\in \Omega}\left\{|x|\,: \, v(x) \neq 0\right\}.
\] 
The relative errors for these front distances are also provided. The front is relatively well approximated, despite the usage of a low-order mass-lumped method. The error increases with $m$, which is consistent with the results in Section \ref{sec:rates.h}, and indicates that the Barenblatt solution is more challenging to approximate for larger values of $m$ -- probably due to its reduced regularity as $m$ increases.

The surface plots of the numerical solution for $m=2.5$, using $\delta t^{(n+\frac12)}=10^{-3}$ and $h =2^{-7}$, are presented in Figure~\ref{fig:2}. We notice the preservation of symmetry of the solution, and the expected expansion combined with diminution of the maximal value of the solution. 

\begin{table}[h!]
\centering
\begin{tabular}{c| c  c | c  }
 $m $ &  $d_u$  & $d_{u_B}$ &  $\frac{|d_u - d_{u_B}|}{d_{u_B}}$  \\
\hline
$2.0$& $0.401$ & $0.404$ & $0.8\%$ \\
$2.2$& $0.408$ & $0.406$ & $0.6\%$ \\
$2.5$& $0.422$ & $0.412$ & $2.25\%$\\
$2.7$& $0.431$ & $0.418$ & $3.0\%$ \\
$3.0$& $0.446$ & $0.428$ & $4.11\%$
\end{tabular}
\caption{ML$\mathbb{P}^1$: Front distance of the approximate solution and the Barenblatt solution.}
\label{ta: 2}
\end{table}

\begin{figure}
\subfloat[$t= 0.1$]{\includegraphics[width=0.5\textwidth, height=0.3\textheight]{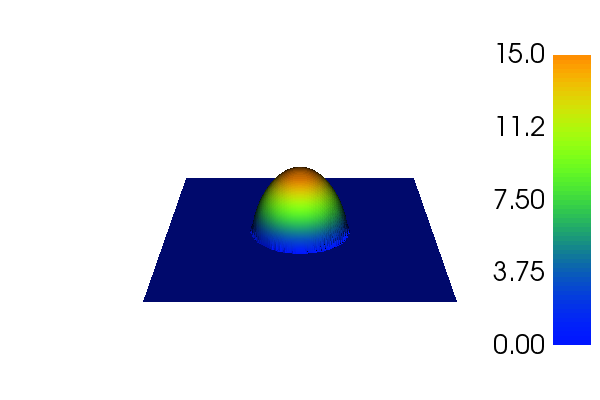}} 
\subfloat[$t= 0.19$]{\includegraphics[width=0.5\textwidth, height=0.3\textheight]{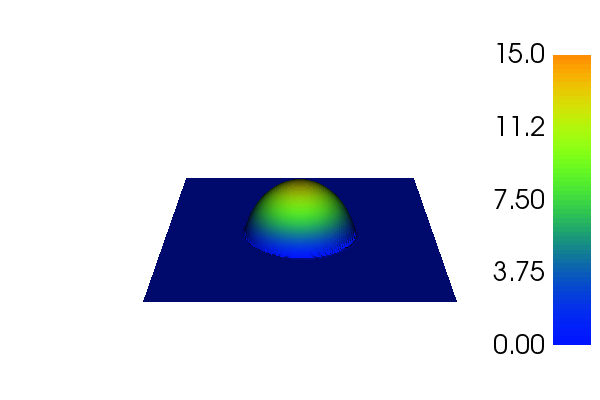}}\\
\subfloat[$t= 0.37$]{\includegraphics[width=0.5\textwidth, height=0.3\textheight]{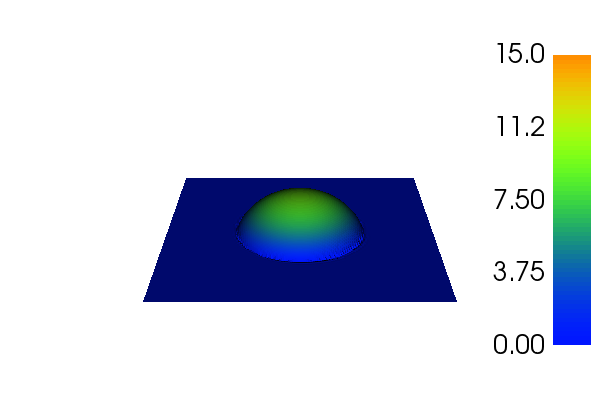}}
\subfloat[$t= 0.73$]{\includegraphics[width=0.5\textwidth, height=0.3\textheight]{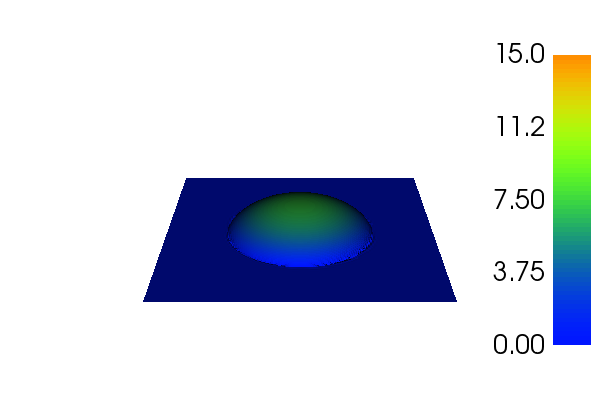}} 
\caption{Surface plot of the solution at several values of time $t$ with $m=2.5$ and $C_B= 0.005$.}
\label{fig:2}
\end{figure}

\section{Conclusion}\label{sec:concl}
We presented and analysed the gradient discretisation method for the porous me\-dium equation, in both slow and fast diffusion regimes. Using discrete functional analysis techniques, provided by the GDM framework and involving in particular a discrete Ascoli--Arzela theorem, we obtained a strong $L^2$ convergence result for the approximate gradients, and a uniform-in-time strong $L^{m+1}(\Omega)$ convergence result for the approximate solutions. These results apply to all methods that fall into the GDM framework. We illustrated the theoretical convergence using the mass-lumped conforming $\mathbb{P}_1$ and the Hybrid Mimetic Mixed methods to approximate the Barenblatt solution. The overall numerical approximations, including the location of the front, remain reasonably good in both slow and fast diffusion regimes.
\section{Appendix}\label{sec:appen}
The proofs of following lemmas can be found in~\cite{Droniou2016} (see Lemma 3.4 and 3.5 in this reference).

\begin{lemma}\label{lem: ap2}
Let $I\subset \R$ be a closed interval and $F: I\rightarrow (-\infty,+\infty]$ be a convex continuous function. Let $v\in L^2(\Omega;I)$ and $(v_n)_{n\in\N}\subset L^2(\Omega;I)$ be such that $(v_n)_{n\in\N}\rightarrow v$ weakly in $L^2(\Omega)$. Then
\[
\int_\Omega F(v)(\x)d\x \leq \liminf_{n\rightarrow\infty} \int_\Omega F(v_n)(\x)d\x.
\]
\end{lemma}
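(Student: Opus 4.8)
The plan is to combine three classical ingredients: a reduction to a nonnegative integrand via a supporting affine function, the strong ($L^2$-norm) lower semicontinuity of the integral functional $J(w):=\int_\Omega F(w)(\x)\,d\x$, and an upgrade from strong to weak lower semicontinuity using the convexity of $J$. If $\liminf_{n\to\infty}\int_\Omega F(v_n)\,d\x=+\infty$ there is nothing to prove, so I assume this liminf is finite (and if $I$ is a singleton the statement is trivial, so I assume $I$ has nonempty interior). Pick an interior point $t_0$ of $I$ and an affine minorant $\ell(t)=at+b$ of $F$ with $\ell\le F$ on $I$ (a supporting line at $t_0$). Since $\Omega$ is bounded, $w\in L^2(\Omega)\subset L^1(\Omega)$, hence $\ell(w)\in L^1(\Omega)$ for every $w\in L^2(\Omega;I)$ and $\int_\Omega F(w)\,d\x$ is well defined in $(-\infty,+\infty]$. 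Because $\int_\Omega\ell(v_n)\,d\x=a\int_\Omega v_n\,d\x+b|\Omega|\to\int_\Omega\ell(v)\,d\x$ (weak convergence tested against the constant $\mathbf 1_\Omega\in L^2(\Omega)$), the asserted inequality for $F$ is equivalent to the same inequality for $F-\ell$; so I may and do assume $F\ge 0$, convex and continuous.

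Next, I apply Mazur's lemma: the weak convergence $v_n\to v$ in $L^2(\Omega)$ yields, for each $N$, a finite convex combination $\tilde v_N=\sum_{n=N}^{M_N}\lambda_n^{(N)}v_n$ (with $\lambda_n^{(N)}\ge 0$ and $\sum_n\lambda_n^{(N)}=1$) such that $\tilde v_N\to v$ strongly in $L^2(\Omega)$; passing to a further subsequence, $\tilde v_N\to v$ a.e.\ on $\Omega$, and $\tilde v_N\in L^2(\Omega;I)$ since $I$, being an interval, is convex. Pointwise convexity of $F$ gives $F(\tilde v_N(\x))\le\sum_{n=N}^{M_N}\lambda_n^{(N)}F(v_n(\x))$ for a.e.\ $\x$, and integrating over $\Omega$,
\[
\int_\Omega F(\tilde v_N)\,d\x\;\le\;\sum_{n=N}^{M_N}\lambda_n^{(N)}\int_\Omega F(v_n)\,d\x\;\le\;\sup_{n\ge N}\int_\Omega F(v_n)\,d\x .
\]
On the other hand, $F\ge 0$, $F$ continuous and $\tilde v_N\to v$ a.e.\ allow Fatou's lemma, giving $\int_\Omega F(v)\,d\x\le\liminf_{N\to\infty}\int_\Omega F(\tilde v_N)\,d\x$. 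Chaining the two,
\[
\int_\Omega F(v)\,d\x\;\le\;\liminf_{N\to\infty}\int_\Omega F(\tilde v_N)\,d\x\;\le\;\lim_{N\to\infty}\,\sup_{n\ge N}\int_\Omega F(v_n)\,d\x\;=\;\liminf_{n\to\infty}\int_\Omega F(v_n)\,d\x ,
\]
which is the claim (after adding back the affine term $\ell$).

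The only genuinely delicate points are bookkeeping: ensuring that $\int_\Omega F(v_n)\,d\x$ is meaningful (even when $F$ is unbounded or takes the value $+\infty$) before any limit is taken, which is exactly what the supporting-line normalisation handles, and extracting the a.e.-convergent subsequence of the Mazur combinations while keeping the convex-combination bound valid along it. Everything else is routine. Alternatively, the strong-to-weak upgrade can be phrased abstractly: $J$ is convex and strongly lower semicontinuous on $L^2(\Omega;I)$ (Fatou plus a.e.\ subsequences), hence its sublevel sets are convex and strongly closed, therefore weakly closed, which is precisely weak lower semicontinuity of $J$ — the same argument packaged differently.
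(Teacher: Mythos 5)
Your overall strategy (reduce to $F\ge 0$ via an affine minorant, then upgrade strong to weak lower semicontinuity through Mazur's lemma and Fatou) is the standard one; the paper itself gives no internal proof and simply cites \cite{Droniou2016} (Lemma~3.4 there), so there is nothing in the text to compare against beyond checking your argument on its own terms.

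There is, however, one step that fails as written. In your final chain you claim
$\lim_{N\to\infty}\sup_{n\ge N}\int_\Omega F(v_n)\,d\x=\liminf_{n\to\infty}\int_\Omega F(v_n)\,d\x$,
but $\lim_{N\to\infty}\sup_{n\ge N}a_n$ is by definition $\limsup_{n\to\infty}a_n$, not the liminf. Since the Mazur combination $\tilde v_N$ uses terms $v_n$ with $n\ge N$ and you bound $\sum_n\lambda_n^{(N)}\int_\Omega F(v_n)\,d\x$ by the supremum over $n\ge N$, your displayed inequalities only yield $\int_\Omega F(v)\,d\x\le\limsup_{n\to\infty}\int_\Omega F(v_n)\,d\x$, which is strictly weaker than the lemma. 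The repair is standard and short: first extract a subsequence $(v_{n_k})_k$ along which $\int_\Omega F(v_{n_k})\,d\x$ converges to $\liminf_{n\to\infty}\int_\Omega F(v_n)\,d\x$ (this subsequence still converges weakly to $v$), and only then apply Mazur's lemma to it; the supremum over the tail of the subsequence then converges to the desired liminf. You should also make this explicit because your parenthetical ``abstract'' variant at the end (convex plus strongly closed sublevel sets implies weakly closed, hence weak lower semicontinuity) is actually correct precisely because it performs this subsequence extraction implicitly. A second, much smaller point: you take a supporting line at an arbitrary interior point $t_0$ of $I$, but $F$ may equal $+\infty$ there; you should instead invoke the existence of an affine minorant of the proper convex function $F$ (which exists once some $\int_\Omega F(v_n)\,d\x$ is finite, forcing $F$ to be finite somewhere). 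With these two adjustments the proof is complete.
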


\begin{lemma}[Minty's trick]\label{lem: ap1}
Let $F\in C^0(\R)$ be a non-decreasing function. Let $(X,\mu)$ be a measurable set with finite measure and let $(u_n)_{n\in\N}\subset L^p(X)$, with $p>1$ satisfy
\begin{enumerate}
\item there exists $u\in L^p(X)$ such that  $(u_n)_{n\in\N}\rightarrow u$ weakly in $L^p(X)$,
\item $(F(u_n))_{n\in\N}\subset L^1(X)$ and there exists $v\in L^1(X)$ such that  $(F(u_n))_{n\in\N}\rightarrow u$ strongly in $L^1(X)$.
\end{enumerate}
Then $v=F(u)$ a.e. on $X$.
\end{lemma}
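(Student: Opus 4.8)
The plan is to exploit the monotonicity of $F$ together with the weak-strong pairing to pass to the limit in an inequality of the form $(F(u_n)-F(\varphi))(u_n-\varphi)\geq 0$, valid pointwise since $F$ is non-decreasing. First I would observe that the assumptions give $F(u_n)\to v$ in $L^1(X)$ and $u_n\to u$ weakly in $L^p(X)$; since $(X,\mu)$ has finite measure and $p>1$, for any bounded measurable $\varphi$ the product $F(u_n)(u_n-\varphi)$ is controlled: $F(u_n)$ is bounded in $L^1$, $u_n$ is bounded in $L^p$, and one needs a uniform integrability argument (or a truncation of $F$) to make the pairing $\langle F(u_n),u_n\rangle$ pass to the limit. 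The cleanest route is to first prove the result assuming $F$ is also bounded (and then remove that assumption), because for bounded $F$ the sequence $F(u_n)$ is bounded in $L^\infty$, so $\langle F(u_n),u_n\rangle\to\langle v,u\rangle$ directly from weak-strong convergence (strong $L^1$ convergence of $F(u_n)$, which is also bounded in $L^\infty$ hence converges weakly-$*$, against weak $L^p$ convergence of $u_n$ — here one uses $\mu(X)<\infty$ so $L^\infty\subset L^{p'}$).

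Next, fix an arbitrary $g\in L^\infty(X)$ and a scalar $t>0$, and test the monotonicity inequality with $\varphi=u\pm tg$ (or, staying safe, with bounded functions). Concretely, from $\big(F(u_n)-F(\varphi)\big)\big(u_n-\varphi\big)\geq 0$ integrate over $X$:
\[
\int_X F(u_n)(u_n-\varphi)\,d\mu \;\geq\; \int_X F(\varphi)(u_n-\varphi)\,d\mu.
\]
Letting $n\to\infty$: the right-hand side tends to $\int_X F(\varphi)(u-\varphi)\,d\mu$ since $F(\varphi)\in L^{p'}$ (as $\varphi$ bounded, $\mu(X)<\infty$) pairs with $u_n\rightharpoonup u$; the left-hand side tends to $\int_X v\,(u-\varphi)\,d\mu$ by the bounded-$F$ argument above. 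Hence $\int_X (v-F(\varphi))(u-\varphi)\,d\mu\geq 0$ for all bounded $\varphi$. Now choose $\varphi=u-t g$ with $g\in L^\infty$, $t>0$, divide by $t$, and let $t\downarrow 0$; continuity of $F$ and dominated convergence give $\int_X(v-F(u))g\,d\mu\geq 0$. Replacing $g$ by $-g$ yields the reverse inequality, so $\int_X(v-F(u))g\,d\mu=0$ for all $g\in L^\infty(X)$, whence $v=F(u)$ a.e.

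To remove the boundedness assumption on $F$, I would truncate: set $F_M(r)=\max(-M,\min(M,F(r)))$, which is still continuous and non-decreasing, and note $|F_M(u_n)-F_M(u_k)|\le|F(u_n)-F(u_k)|$ pointwise, so $(F_M(u_n))_n$ is Cauchy in $L^1(X)$ and converges to some $v_M$; moreover $v_M\to v$ in $L^1$ as $M\to\infty$ by dominated convergence applied along a subsequence where $F(u_n)\to v$ a.e. Applying the bounded case gives $v_M=F_M(u)$ a.e.\ for every $M$, and letting $M\to\infty$ (using continuity of $F$ and $F_M(u)\to F(u)$ a.e.) yields $v=F(u)$ a.e. The main obstacle is the passage to the limit in $\langle F(u_n),u_n\rangle$: one cannot pair a merely-$L^1$-convergent sequence with a weakly-$L^p$-convergent one without extra control, and the truncation of $F$ (rather than of $u_n$) is the device that resolves this cleanly while preserving monotonicity.
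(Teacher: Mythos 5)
Your argument is correct, but note that the paper does not actually prove this lemma: it simply refers to \cite{Droniou2016} (Lemmas 3.4 and 3.5 there), so there is no in-paper proof to match. What you give is a self-contained proof along the classical Minty/Leray--Lions lines, and you correctly identify the one genuine difficulty of this $L^1$-versus-weak-$L^p$ setting, namely that $\langle F(u_n),u_n\rangle$ cannot be passed to the limit directly; your device of truncating $F$ (rather than $u_n$), which preserves both continuity and monotonicity and reduces to a bounded nonlinearity, resolves it cleanly, and the reconstruction $v_M=T_M(v)=F_M(u)=T_M(F(u))$ followed by $M\to\infty$ is sound since $T_M$ is $1$-Lipschitz and $v$, $F(u)$ are a.e.\ finite.

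Two spots should be tightened, though neither is a real gap. First, in the bounded-$F$ case the justification of $\langle F(u_n),u_n\rangle\to\langle v,u\rangle$ should not be phrased as ``weak-$*$ in $L^\infty$ against weak in $L^p$'' (a weak--weak pairing does not pass to the limit); the correct statement is that $L^1$-convergence together with the uniform $L^\infty$ bound and $\mu(X)<\infty$ gives \emph{strong} convergence of $F(u_n)$ to $v$ in $L^{p'}(X)$ (since $\|w\|_{L^{p'}}^{p'}\le\|w\|_{L^\infty}^{p'-1}\|w\|_{L^1}$), which then pairs with the weak $L^p$ convergence of $u_n$. Second, you derive $\int_X(v-F(\varphi))(u-\varphi)\,d\mu\ge 0$ ``for all bounded $\varphi$'' and then substitute $\varphi=u-tg$, which is not bounded when $u\in L^p$ is not; this is harmless precisely because $F$ is bounded at that stage, so $F(\varphi)\in L^\infty\subset L^{p'}$ for \emph{any} measurable $\varphi$ and the limit passage is valid for all $\varphi\in L^p(X)$ --- but you should say so rather than restrict to bounded test functions and then violate the restriction.
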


\medskip

\thanks{\textbf{Acknowledgement}: this research was supported by the Australian Government through the Australian Research Council's Discovery Projects funding scheme (pro\-ject number DP170100605).
}


\bibliographystyle{abbrv}
\bibliography{nonlinearPorousMedia}
\end{document}